\theoremstyle{plain}
\newtheorem{thm}{Theorem}[section]
\newtheorem{prop}[thm]{Proposition}
\newtheorem{lem}[thm]{Lemma}
\newtheorem{cor}[thm]{Corollary}
\theoremstyle{definition}
\newtheorem{defn}[thm]{Definition}
\newtheorem{rem}[thm]{Remark}
\numberwithin{equation}{section}
\DeclareFontFamily{OT2}{cmr}{\hyphenchar\font45 }
\DeclareFontShape{OT2}{cmr}{m}{n}{%
   <5><6><7><8><9>gen*wncyr%
   <10><10.95><12><14.4><17.28><20.74><24.88>wncyr10}{}
\DeclareFontShape{OT2}{cmr}{b}{n}{%
   <5><6><7><8><9>gen*wncyb%
   <10><10.95><12><14.4><17.28><20.74><24.88>wncyb10}{}
\DeclareMathAlphabet{\mathcyr}{OT2}{cmr}{m}{n}
\DeclareMathAlphabet{\mathcyb}{OT2}{cmr}{b}{n}
\SetMathAlphabet{\mathcyr}{bold}{OT2}{cmr}{b}{n}
\newcommand{\bZ}{\mathbb{Z}}
\newcommand{\bQ}{\mathbb{Q}}
\newcommand{\bR}{\mathbb{R}}
\newcommand{\bC}{\mathbb{C}}
\newcommand{\bF}{\mathbb{F}}
\newcommand{\cA}{\mathcal{A}}
\newcommand{\cF}{\mathcal{F}}
\newcommand{\cI}{\mathcal{I}}
\newcommand{\cRS}{\mathcal{RS}}
\newcommand{\cS}{\mathcal{S}}
\newcommand{\cZ}{\mathcal{Z}}
\newcommand{\frH}{\mathfrak{H}}
\newcommand{\frZ}{\mathfrak{Z}}
\newcommand{\sh}{\mathbin{\mathcyr{sh}}}
\newcommand{\bbra}[1]{\llbracket #1 \rrbracket}
\newcommand{\bk}{{\boldsymbol{k}}}
\newcommand{\bl}{{\boldsymbol{l}}}
\newcommand{\dep}{{\mathrm{dep}}}
\newcommand{\wt}{{\mathrm{wt}}}
\newcommand{\wtd}{{\mathrm{wtd}}}
\title{Weighted sum formula for variants of half multiple zeta values}
\author{Hanamichi Kawamura, Takumi Maesaka, Masataka Ono}
\subjclass[2010]{Primary 11M32, Secondary 11M35}
\keywords{Multiple zeta values; Finite multiple zeta values; Symmetric multiple zeta values; Weighted sum formula.}
\thanks{The third-named author is supported by Waseda University Grants for Special Research Projects (Grant No. 2022C-300).}
\address[H.~Kawamura]{
Mathematics, Faculty of Science Division I\\
Tokyo University of Science\\ 
1-3 Kagurazaka, Shinjuku-ku, Tokyo, 162-8601\\
Japan}
\email{1121026@ed.tus.ac.jp}
\address[T.~Maesaka]{
Department of Information and Computer Science\\
Kanazawa Institute of Technology\\
7-1, Ohgigaoka, Nonoichi-shi, Ishikawa, 921-8501 \\
Japan}
\email{c1117334@planet.kanazawa-it.ac.jp}
\address[M.~Ono]{
	Global Education Center\\
	Waseda University\\
	1-6-1, Nishi-Waseda, Shinjuku-ku, Tokyo, 169-8050\\
	Japan}
\email{m-ono@aoni.waseda.jp}
\begin{document}

\maketitle

%Abstract
\begin{abstract}
We prove some weighted sum formulas for half multiple zeta values, half finite multiple zeta values, and half symmetric multiple zeta values. The key point of our proof is Dougall's identity for the generalized hypergeometric function ${}_{5}F_{4}$. Similar results for interpolated refined symmetric multiple zeta values and half refined symmetric multiple zeta values are also discussed.
\end{abstract}
%

%%%%%%%%%%%%%%%%%%%%%%%%%%%%%%%%%%%%%%%%%%%%%%%%%%%%%%%
%Section1%%%%%%%%%%%%%%%%%%%%%%%%%%%%%%%%%%%%%%%%%%%%%%%%%%
%%%%%%%%%%%%%%%%%%%%%%%%%%%%%%%%%%%%%%%%%%%%%%%%%%%%%%%
\section{Introduction}

For positive integers $k_1, \ldots, k_r$ with $k_r \ge2$, multiple zeta value (MZV) $\zeta(k_1, \ldots, k_r)$ is a real number defined by
\begin{equation*}
\zeta(k_1, \ldots, k_r)
\coloneqq
\lim_{N\rightarrow \infty}\zeta^{}_{\le N}(k_1, \ldots, k_r)
\coloneqq
\lim_{N\rightarrow \infty}\sum_{0<n_1<\cdots<n_r\le N}\frac{1}{n^{k_1}_1\cdots n^{k_r}_{r}}.
\end{equation*}
For any function $F$ on the sets of indices and the empty index $\varnothing$, we set $F(\varnothing)$ to be a unit element. A lot of $\bQ$-linear relations among MZVs are known. The weighted sum formula, which is first proved by Ohno and Zudilin \cite{OZ} and there are many variants (for example, \cite{ELO}, \cite{GX} and \cite{Kad}), is one of the most fundamental $\bQ$-relation among MZVs. 

Many variants of multiple zeta values are studied. For an index $\bk=(k_1, \ldots, k_r)$ with $k_r\ge2$ and an indeterminate $t$, Yamamoto \cite{Y} defined the interpolated multiple zeta values ($t$-MZV) $\zeta^{t}(\bk)$ by
\begin{equation*}
\zeta^{t}(\bk)
\coloneqq
\sum_{\bl=(l_1, \ldots, l_s)\preceq \bk}\zeta(\bl)t^{r-s} \in \cZ[t].
\end{equation*}
Here, $\bl \preceq \bk$ means that $\bl$ runs over the all indices of the form $\bl=(k_1 \square \cdots \square k_r)$ in which each $\square$ is filled by the comma , or the plus +, and $\cZ$ is the $\bQ$-algebra generated by all MZVs. Note that $\zeta^{0}(\bk)$ coincides with MZV and $\zeta^{1}(\bk)$ coincides with so called the multiple zeta star value $\zeta^{\star}(\bk)$. Yamamoto proved the sum formula for $t$-MZVs in \cite{Y}, and Li proved a kind of weighted sum formula for $t$-MZVs in \cite{L}. In this paper, we prove a ``$2^{k_r}$-type" weighted sum formula for $1/2$-MZVs, that is, $t$-MZVs with $t=1/2$. For positive integers $i \le r<k$, let $I_i(k,r)$ be the set of indices $\bk=(k_1, \ldots, k_r)$ of $\wt(\bk)=k, \dep(\bk)=r$ and $k_i \ge2$, where $\wt(\bk)$ means the sum of entries and $\dep(\bk)$ is the number of entries.
\begin{thm}\label{thm:wtd SF1}
For positive integers $r<k$, we have
\begin{equation*}
\sum_{\bk \in I_r(k,r)}2^{k_r}\zeta^{\frac{1}{2}}(\bk)
=\frac{1}{2^{r-2}}\biggl\{2^{k-1}-1+\sum_{n=0}^{r-1}\binom{k-1}{n}\biggr\}\zeta(k).
\end{equation*}
\end{thm}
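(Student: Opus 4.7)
The plan is to realize the weighted sum as the coefficient of $z^{k-r-1}$ in a natural generating series, to recognize the series as a very-well-poised ${}_5F_4$, and to evaluate it via Dougall's identity.

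First, I would use the non-strict representation
\[
\zeta^{1/2}(k_1, \ldots, k_r) = \sum_{0 < n_1 \leq \cdots \leq n_r} 2^{-E(\mathbf{n})} \prod_{i=1}^r n_i^{-k_i}, \qquad E(\mathbf{n}) := \#\{i : n_i = n_{i+1}\},
\]
obtained by expanding Yamamoto's definition at $t = 1/2$. Substituting this into the theorem's left-hand side, exchanging the order of summation, and evaluating the inner geometric series over exponents $k_i \geq 1$ with $k_r \geq 2$ yields the generating-function identity
\[
\sum_{k > r} \Bigl(\sum_{\bk \in I_r(k,r)} 2^{k_r}\zeta^{1/2}(\bk)\Bigr) z^{k-r-1} = 4 \sum_{0 < n_1 \leq \cdots \leq n_r} \frac{2^{-E(\mathbf{n})}}{n_r(n_r - 2z)} \prod_{i=1}^{r-1} \frac{1}{n_i - z}.
\]

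Second, I would regroup the non-strict sum by the multiplicity pattern of $n_1, \ldots, n_r$, converting it into a sum over strict chains $m_1 < \cdots < m_s$ with prescribed multiplicities $(c_1, \ldots, c_s)$. After reparametrisation this multiple sum becomes a specialisation of the very-well-poised ${}_5F_4(1)$ with parameters affine in $z$, and Dougall's identity collapses it to an explicit quotient of gamma functions. Expanding this quotient via $\log\Gamma(1-z) = \gamma z + \sum_{k \geq 2} \zeta(k) z^k / k$ and reading off the coefficient of $z^{k-r-1}$ gives $\zeta(k)$ multiplied by an explicit polynomial in $k$. The $2^{k-1}$ piece should emerge from the doubled variable $n_r - 2z$ via $z \mapsto z/2$, while the sum $\sum_{n=0}^{r-1}\binom{k-1}{n}$ should fall out of a binomial identity applied to the Dougall quotient (the shifted Pochhammer $(1+a-b)_\infty / (1+a)_\infty$ type ratio naturally produces such partial binomial sums).

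The main obstacle is the correct specialisation of Dougall's identity: the asymmetry produced by the $2^{k_r}$ weight and the condition $k_r \geq 2$ (giving the isolated factor $\frac{1}{n_r(n_r-2z)}$) must be absorbed into exactly one of Dougall's four parameters, while the symmetric role of $n_1, \ldots, n_{r-1}$ ties the remaining ones together. I expect this will require a limiting case (such as letting one parameter tend to infinity to eliminate a gamma factor) to effect the asymmetry cleanly. Once the correct substitution is in hand, all subsequent coefficient extraction reduces to elementary binomial manipulation.
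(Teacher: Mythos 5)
Your opening reduction is fine: the non-strict representation of $\zeta^{1/2}$ at $t=1/2$ and the generating identity in $z$ are both correct. The gap is at the central step, where you claim that, for \emph{fixed} depth $r$, regrouping the chain sum $\sum_{0<n_1\le\cdots\le n_r}2^{-E(\mathbf{n})}\frac{1}{n_r(n_r-2z)}\prod_{i<r}\frac{1}{n_i-z}$ by multiplicity patterns turns it into a specialisation of a very-well-poised ${}_5F_4(1)$. It does not: after regrouping you still have a genuinely multi-dimensional sum over strict chains $m_1<\cdots<m_s$ of varying length $s$, and no reparametrisation turns an $s$-fold sum into the one-dimensional series $\sum_n \frac{(a_1)_n\cdots(a_5)_n}{(b_1)_n\cdots(b_4)_n}\frac{1}{n!}$. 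The paper collapses the inner summations by introducing a second formal variable marking the depth: by Lemma \ref{lem:gen_func_fmh}, $\sum_{\bk}\zeta^{\frac12}_{\le n}(\bk)A^{\dep(\bk)}W^{\wt(\bk)}=\frac{(1-(1-\frac{A}{2})W)_n}{(1-(1+\frac{A}{2})W)_n}$, so summing over \emph{all} depths at once yields a single series over the outer index $n$ with Pochhammer-ratio terms, and the fixed depth $r$ is recovered only at the end by extracting the coefficient of $A^{r-1}$. Without this device (or an equivalent induction on depth) your plan has no route from the multiple sum to a hypergeometric series, so the appeal to Dougall cannot get started.

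Two further points would need repair even granting that step. First, the relevant input is not Dougall's identity at generic parameters but its derivative in $d$ at $d=0$ (equation \eqref{eq:after_diff}): a genuine ${}_5F_4(1)$ evaluation gives a quotient of Gamma values whose expansion is an exponential of zeta series, hence polynomial in zeta values, which cannot produce an answer linear in $\zeta(k)$; the digamma function $\psi_1$ enters precisely through the degenerate $d\to0$ differentiation, and your suggested ``let a parameter tend to infinity'' limit does not achieve this. Second, the paper never handles the weight $2^{k_r}$ directly: it works with $2^{a-2}-1$, for which $\sum_{a\ge2}(2^{a-2}-1)(W/n)^a=\frac{W^3}{n(n-W)(n-2W)}$ combines with the junction factor $2(n-W)$ to reproduce exactly the very-well-poised factor $\frac{2n+a}{n(n+a)}$ at $a=-2W$, and it converts to the $2^{k_r}$ statement only at the end by adding Yamamoto's sum formula for $1/2$-MZVs. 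Your direct $2^{k_r}$ weight is not hopeless (the junction term still supplies a factor $n-z$), but matching it to a Dougall-type series is precisely the bookkeeping your plan leaves unverified.
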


Recently, Kaneko and Zagier defined kinds of variants of MZV, that is, the finite multiple zeta values (FMZVs) and the symmetric multiple zeta values (SMZVs). 
\begin{defn}
We set
\begin{equation*}
\cA \coloneqq 
\left.\biggl(\prod_{\text{$p$ : prime}}\bZ/p\bZ\biggr)
\right/ \biggl(\bigoplus_{\text{$p$ : prime}}\bZ/p\bZ\biggr).
\end{equation*}
Note that since $\bQ$ can be embedded in $\cA$ diagonally, $\cA$ becomes a commutative $\bQ$-algebra.
\end{defn}

\begin{defn}
For an index $\bk=(k_1, \ldots, k_r)$, we define a finite multiple zeta value $\zeta^{}_{\cA}(\bk)$ by
\begin{equation*}
\zeta^{}_{\cA}(\bk) 
\coloneqq (\zeta^{}_{\le p-1}(\bk) \bmod{p})_p \in \cA.
\end{equation*}
\end{defn}

\begin{defn}
For positive integers $k_1, \ldots, k_r$ and $\bullet \in \{\ast, \sh\}$, set
\begin{equation*}
\zeta^{\bullet}_{\cS}(k_1, \ldots, k_r)
\coloneqq
\sum_{i=0}^{r}(-1)^{k_{i+1}+\cdots+k_r}\zeta^{\bullet}(k_1, \ldots, k_i; T)\zeta^{\bullet}(k_r, \ldots, k_{i+1};T).
\end{equation*}
Here, $\zeta^{\bullet}(\bk; T) \in \cZ[T]$ is the $\bullet$-regularized MZVs defined by Ihara, Kaneko and Zagier in \cite{IKZ}. For the detailed definition, see Section 2. It is known that $\zeta^{\bullet}_{\cS}(\bk)$ is independent of the variable $T$ and $\zeta^{\ast}_{\cS}(\bk) \equiv \zeta^{\sh}_{\cS}(\bk) \bmod{\zeta(2)\cZ}$ (for example, see \cite[Proposition 3.2.4]{J} or \cite[Proposition 2.1]{OSY}). We define symmetric multiple zeta value $\zeta^{}_{\cS}(\bk)$ as an element in $\overline{\cZ} \coloneqq \cZ/\zeta(2)\cZ$ by
\begin{equation*}
\zeta^{}_{\cS}(\bk) \coloneqq \zeta^{\bullet}_{\cS}(\bk) \bmod{\zeta(2)\cZ}.
\end{equation*}
\end{defn}

Seki \cite{S}, Murahara and the third-named author \cite{MO} introduced the interpolated FMZVs and SMZVs. For an index $\bk=(k_1, \ldots, k_r)$, an indeterminate $t$ and $\cF \in \{\cA, \cS\}$, we set
\begin{equation*}
\zeta^{t}_{\cF}(\bk)\coloneqq
\sum_{\bl \preceq \bk}\zeta^{}_{\cF}(\bl)t^{r-\dep(\bl)}. 
\end{equation*}
The sum formula for FMZVs and SMZVs was conjectured by Kaneko, and proved by Saito and Wakabayashi for FMZVs \cite[Theorem 1.4]{SW} and by Murahara for SMZVs \cite[Theorem 1.2]{M1}. Moreover, there are several results on weighted sum formula of FMZVs, SMZVs, and their interpolation (for example, \cite{FK}, \cite{HMS2}, \cite{Kam}, \cite{M2}, \cite{MO}). 
In this paper, we also prove a kind of weighted sum formula for $1/2$-FMZVs and $1/2$-SMZVs. Moreover, combining the sum formula for $1/2$-MZV and $1/2$-FMZV, we obtain a ``$2^{k_r}$-type"weighted sum formula of $1/2$-FMZVs and $1/2$-SMZVs.

For a positive integer $k \ge2$, set
\begin{equation}
\frZ_{\cF}(k)
\coloneqq
\begin{cases}
(\frZ_p(k) \bmod{p})_p \coloneqq\biggl(\displaystyle\frac{B_{p-k}}{k} \bmod{p}\biggr)_p \in \cA & \text{if $\cF=\cA$}, \\
\zeta(k) \bmod{\zeta(2)} \in \overline{\cZ} & \text{if $\cF=\cS$}.
\end{cases}
\end{equation}
Here, $B_n \in \bQ$ is the $n$-th Bernoulli number defined by $\displaystyle\frac{te^t}{e^t-1}=\sum_{n=0}^{\infty}\frac{B_n}{n!}t^n$.

\begin{thm}\label{thm:wtd SF2}
For positive integers $1\le i \le r<k$, we have
\begin{equation}\label{eq:wtdSF_F_i}
\sum_{\bk \in I_i(k,r)}(2^{k_i-2}-1)\zeta^{\frac{1}{2}}_{\cF}(\bk)
=\frac{(-1)^i\{(-1)^r-(-1)^k\}}{2^r}\biggl\{2^{k-1}-\sum_{u=0}^{i-1}\binom{k-1}{u}-\sum_{v=0}^{r-i}\binom{k-1}{v}\biggr\}\frZ_{\cF}(k)
\end{equation}
and 
\begin{equation}\label{eq:wtdSF_F_r}
\sum_{\bk \in I_r(k,r)}2^{k_r}\zeta^{\frac{1}{2}}_{\cF}(\bk)
=\frac{1+(-1)^r}{2^{r-2}}\biggl\{2^{k-1}-1+\sum_{u=0}^{r-1}\binom{k-1}{u}\biggr\}\frZ_{\cF}(k)
\end{equation}
\end{thm}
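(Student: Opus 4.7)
The plan is to prove equations (1.1) and (1.2) in sequence. For equation (1.1), I would mirror the strategy behind Theorem 1.1: package the left-hand side as a hypergeometric sum and evaluate it using Dougall's ${}_5F_4$ identity. Expanding
$$\zeta^{\frac{1}{2}}_{\cF}(\bk)=\sum_{\bl \preceq \bk}\zeta_{\cF}(\bl)\Bigl(\tfrac{1}{2}\Bigr)^{r-\dep(\bl)}$$
and rearranging the order of summation converts the sum over $I_i(k,r)$ into a form to which Dougall's identity applies. After collecting the geometric-series pieces coming from the entries lying before and after position $i$, one should land on a closed form involving $\frZ_{\cF}(k)$, in which the two binomial sums $\sum_{u=0}^{i-1}\binom{k-1}{u}$ and $\sum_{v=0}^{r-i}\binom{k-1}{v}$ arise naturally as the two ``tail'' contributions, and the sign $(-1)^i\{(-1)^r-(-1)^k\}$ falls out as the parity factor from the interchange.

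For equation (1.2), the cleanest route is the elementary decomposition
$$2^{k_r}=4(2^{k_r-2}-1)+4,$$
which splits the left-hand side into
$$4\sum_{\bk \in I_r(k,r)}(2^{k_r-2}-1)\zeta^{\frac{1}{2}}_{\cF}(\bk)+4\sum_{\bk \in I_r(k,r)}\zeta^{\frac{1}{2}}_{\cF}(\bk).$$
The first piece is (1.1) specialized to $i=r$, while the second is the sum formula for $1/2$-FMZVs and $1/2$-SMZVs that follows from the works of Saito--Wakabayashi and Murahara via the interpolation framework of Seki and Murahara--Ono. Substituting both into the decomposition, using the identity $(-1)^r\{(-1)^r-(-1)^k\}=1-(-1)^{r+k}$, and collecting terms should reproduce the factor $(1+(-1)^r)$ and the bracketed expression on the right-hand side of (1.2).

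The main obstacle I expect lies in step 1: matching the output of Dougall's identity with the precise combinatorial structure of (1.1), and in particular ensuring that the position $i$ enters as the split point between the two binomial sums with the correct signs. A secondary subtlety appears in the $\cF=\cS$ case, where all identities must be interpreted modulo $\zeta(2)\cZ$, invoking the $T$-independence of $\zeta^{\bullet}_{\cS}(\bk)$ recalled in the paper. If a direct hypergeometric derivation proves cumbersome, a useful auxiliary tool is the reversal relation $\zeta^{\frac{1}{2}}_{\cF}(k_1,\ldots,k_r)=(-1)^k\zeta^{\frac{1}{2}}_{\cF}(k_r,\ldots,k_1)$---available because $t=1/2$ is fixed under $t \mapsto 1-t$---which interchanges the roles of $i$ and $r+1-i$ and halves the case analysis.
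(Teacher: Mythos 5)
Your treatment of \eqref{eq:wtdSF_F_r} is essentially the paper's: writing $2^{k_r}=4(2^{k_r-2}-1)+4$ and combining \eqref{eq:wtdSF_F_i} at $i=r$ with the sum formula for $1/2$-FMZVs and $1/2$-SMZVs (Seki) is exactly how the paper deduces it, so that step is fine. The genuine gap is in your step 1, the proof of \eqref{eq:wtdSF_F_i}, and it is not just a matter of bookkeeping. For $\cF=\cS$, $\zeta^{\frac{1}{2}}_{\cS}(\bk)$ is not a nested series: it is a signed sum of products of two $\ast$-regularized interpolated MZVs, so ``expanding $\zeta^{\frac12}_{\cF}(\bk)=\sum_{\bl\preceq\bk}\zeta_{\cF}(\bl)(1/2)^{r-\dep(\bl)}$ and interchanging summation'' does not produce anything to which Dougall's ${}_5F_4$ identity can be applied. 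The missing idea is structural: the paper introduces the two-variable ``anti-hook'' generating function $G^{\frac12}_{\wtd}(A,B,W)$ built from the double truncated sums $\zeta^{\frac12}\bigl(\begin{smallmatrix}\bk\\ \bl\end{smallmatrix};a\bigr)$, proves via harmonic-algebra (antipode-type) identities and $\Gamma_1(W)\Gamma_1(-W)\equiv 1 \pmod{\zeta(2)\cZ}$ the decomposition $F^{\frac12}_{\wtd,\cS}(A,B,W)=G^{\frac12}_{\wtd}(A,-B,W)+G^{\frac12}_{\wtd}(B,-A,-W)$, and only then evaluates $G^{\frac12}_{\wtd}$ by differentiating Dougall's identity in $d$ and letting $d\to0$. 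The double-series structure (one variable $A$ for the entries before the admissible slot, one variable $B$ for those after) is precisely what makes a ${}_5F_4$ identity relevant and is where the split of the right-hand side into the two binomial tails indexed by $i-1$ and $r-i$ comes from; a single-index rearrangement would only ever involve ${}_2F_1$-type data. This is the obstacle you flag yourself, but the proposal does not supply the mechanism to overcome it.

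For $\cF=\cA$ the gap is more serious still: Dougall's identity is an analytic statement about convergent series and Gamma factors, whereas $\zeta^{\frac12}_{\cA}(\bk)$ is a truncated sum modulo $p$, so ``evaluate using Dougall'' cannot be carried out as stated in $\cA$. The paper has to manufacture a finite analogue: substitute $d=-p$ into Dougall's identity (so the series terminates), analyze the resulting identity modulo $p^2$, and use the Zhou--Cai congruence $(1+a)_{p-1}/(p-1)!\equiv 1-p\sum_{n\ge1}\frZ_p(n+1)a^n \pmod{p^2}$ to convert the Gamma-quotient side into the $\cA$-digamma series $\psi_{\cA}$, together with the congruence $F^{\frac12}_{\wtd,\le p-1}(A,B,W)\equiv G^{\frac12}_{\wtd,\le p-1}(A,-B,W)\pmod{p}$; the final parity factor $(-1)^i\{(-1)^r-(-1)^k\}$ then comes from $\frZ_{\cA}(n)=0$ for even $n$, not from an ``interchange of summation.'' None of this appears in your outline. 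Your closing observation, the reversal relation $\zeta^{\frac12}_{\cF}(\bk)=(-1)^{\wt(\bk)}\zeta^{\frac12}_{\cF}(\overleftarrow{\bk})$ at $t=1/2$, is correct and consistent with \eqref{eq:wtdSF_F_i}, but it only halves the case analysis and does not close either gap.
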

The content of this paper is as follows. In Section 2, we give some generating functions which relate sum formulas for various interpolated MZVs. In Section 3, we evaluate generating functions relating to Theorems \ref{thm:wtd SF1} and Theorem \ref{thm:wtd SF2} for $\cF=\cS$ by using Dougall's identity and then prove Theorem \ref{thm:wtd SF1} and Theorem \ref{thm:wtd SF2} for $\cF=\cS$. The evaluation of the generating function concerning Theorem \ref{thm:wtd SF2} for $\cF=\cA$ is evaluated in Section 4. In the final section, we recall the definition of the refined symmetric multiple zeta values (RSMZVs) introduced by Jarossay and Hirose independently and prove some sum formulas for RSMZV and its variants.

%%%%%%%%%%%%%%%%%%%%%%%%%%%%%%%%%%%%%%%%%%%%%%%%%%%%%%%
%Section2%%%%%%%%%%%%%%%%%%%%%%%%%%%%%%%%%%%%%%%%%%%%%%%%%%
%%%%%%%%%%%%%%%%%%%%%%%%%%%%%%%%%%%%%%%%%%%%%%%%%%%%%%%
\section{Generating functions of $t$-MZVs, $t$-FMZVs, and $t$-SMZV}

In this section, as a preliminary to prove Theorem \ref{thm:wtd SF1} and Theorem \ref{thm:wtd SF2} for $\cF=\cS$, we calculate generating functions of interpolated MZVs and SMZVs due to the method of Hirose, Murahara, and Saito in \cite{HMS1}.

First, we introduce the notation of the harmonic algebra due to Hoffman \cite{Hof}. Let $\frH$ be the non-commutative polynomial ring over $\bQ$ with two variables $e_0, e_1$, and $\frH^1\coloneqq \bQ+e_1\frH$ be the $\bQ$-subalgebra of $\frH$ consisting of the constant and the words starting in $e_1$, and $\frH^0\coloneqq \bQ+e_1\frH e_0$ be the $\bQ$-subalgebra of $\frH^1$ consisting of constants and the words starting in $e_1$ and ending in $e_0$. Note that $\frH^1$ is generated by $e_{\bk} \coloneqq e_1e^{k_1-1}_{0} \cdots e_1e^{k_r-1}_1$ for all indices $\bk=(k_1, \ldots, k_r)$ and $e_{\varnothing} \coloneqq 1$. Let $Z \colon \frH^1 \to \bR$ be the $\bQ$-linear map sending $e_{\bk}$ to $\zeta(\bk)$. For an index $\bk=(k_1, \ldots, k_r)$, set $S^t(e_{\bk}) \coloneqq \sum_{\bl \preceq \bk}e_{\bl}t^{r-\dep(\bl)} \in \frH^1[t]$, and we extend it to the $\bQ[t]$-linear map $S^{t} \colon \frH^1[t] \to \frH^1[t]$. 

We define the $\bQ$-bilinear map $\sh \colon \frH \times \frH \to \frH$ inductively by the following rule.
\begin{enumerate}
\item $w\sh1=1\sh w=w \ (w \in \frH)$.
\item $(w_1u_1)\sh(w_2u_2)=(w_1\sh w_2u_2)u_1+(w_1u_1\sh w_2)u_2 \ (w_1, w_2 \in \frH, u_1, u_2 \in \{e_0, e_1\})$.
\end{enumerate}
It is known that $\frH_{\sh} \coloneqq (\frH, \sh)$ is a commutative $\bQ$-algebra and $\frH^1_{\sh} \coloneqq (\frH^1, \sh)$ and $\frH^0_{\sh}\coloneqq (\frH^0, \sh)$ are commutative $\bQ$-subalgebras of $\frH_{\sh}$ \cite{Re}.

We define the $\bQ$-bilinear map $\ast \colon \frH^1 \times \frH^1 \to \frH^1$ inductively by the following rule.
\begin{enumerate}
\item $w\ast1=1\ast w=w \ (w \in \frH^1)$.
\item $(w_1e_k \ast w_2e_l)=(w_1\ast w_2e_l)e_k+(w_1e_k\ast w_2)e_l+(w_1\ast w_2)e_{k+l} \ (w_1, w_2 \in \frH^1, k,l \in \bZ_{\ge1})$.
\end{enumerate}
It is known that $\frH^{1}_{\ast} \coloneqq (\frH^1, \ast)$ is a commutative $\bQ$-algebra and $\frH^0_{\ast} \coloneqq (\frH^0, \ast)$ is a commutative $\bQ$-subalgebra of $\frH^{1}_{\ast}$ \cite{Hof}.

For $\bullet \in \{\sh, \ast\}$, it is known that $\frH^1_{\bullet} \cong \frH^0_{\bullet}[e_1]$ as $\bQ$-algebra (see \cite[Theorem 6.1]{Re} and \cite[Theorem 3.1]{Hof}). Then, for any index $\bk$, there exists $w_i \in \frH^0$ satisfying that
\begin{equation*}
e_{\bk}=\sum_{i=0}^{n}\frac{w_i}{i!} \bullet \underbrace{e_1 \bullet \cdots \bullet e_1}_{i}.
\end{equation*}
We set
\begin{equation*}
\zeta^{\bullet}(\bk; T)=Z^{\bullet}_{T}(e_{\bk})
\coloneqq \sum_{i=0}^{n}\frac{Z(w_i)}{i!}T^{i} \in \bR[T].
\end{equation*}
We extend $Z^{\bullet}_{T}$ to the $\bQ$-linear map $Z^{\bullet}_{T} \colon \frH^1 \to \bR[T]$. Finally, we set $\zeta^{t,\bullet}(\bk; T) \coloneqq Z^{\bullet}(S^{t}(e_{\bk})) \in \bR[t, T]$.

Here and in what follows, let $\cI$ denote the set of all indices. For an indeterminate $W$, set
\begin{equation*}
\Gamma_{1, \cI}(W)
\coloneqq
\exp_{\ast}\biggl(\sum_{k=1}^{\infty}\frac{e_k}{k}W^k\biggr) \in \frH^1_{\ast}\bbra{W}, \quad
\Gamma_{1}(W)
\coloneqq
\exp\biggl(\sum_{k=1}^{\infty}\frac{\zeta^{\ast}(k; T)}{k}W^k\biggr) \in \bR[T]\bbra{W},
\end{equation*}
where $\exp_{\ast}$ is the exponential function in $\frH^{1}_{\ast}\bbra{W}$. 
\begin{prop}\label{lem:HMS}
In $\frH^1[A]\bbra{W}$, we have
\begin{equation*}
\sum_{\bk \in \cI}e_{\bk}A^{\dep(\bk)}W^{\wt(\bk)}
=\frac{\Gamma_{1,\cI}(W)}{\Gamma_{1,\cI}((1-A)W)}.
\end{equation*}
\end{prop}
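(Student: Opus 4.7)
The plan is to reduce Proposition \ref{lem:HMS} to a position-by-position combinatorial identity in the quasi-shuffle algebra $\frH^1_{\ast}$, starting from the $A = 1$ specialisation
$$\Gamma_{1,\cI}(W) \;=\; \sum_{\bk \in \cI} e_{\bk} W^{\wt(\bk)} \qquad \text{in } \frH^1\bbra{W}.$$
Writing $F(W)$ for the right-hand side, I would verify this classical identity (a standard consequence of the quasi-shuffle Hopf structure of \cite{Hof}) via $\ast$-logarithmic differentiation: a direct depth-one quasi-shuffle count gives the ODE $F'(W) = \bigl(\sum_{k \geq 1} e_{k} W^{k-1}\bigr) \ast F(W)$, and since $\exp_{\ast}\bigl(\sum_{k \geq 1} \tfrac{e_{k}}{k} W^{k}\bigr)$ solves the same ODE with $F(0) = 1$, the two coincide.

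Next, since $\frH^1_{\ast}$ is commutative and $\Gamma_{1,\cI}((1-A)W) \equiv 1 \pmod{W}$, this element is $\ast$-invertible in $\frH^1_{\ast}[A]\bbra{W}$, so the proposition is equivalent to
$$\biggl(\sum_{\bk \in \cI} e_{\bk} A^{\dep(\bk)} W^{\wt(\bk)}\biggr) \ast \Gamma_{1,\cI}\bigl((1-A)W\bigr) \;=\; \Gamma_{1,\cI}(W).$$
Expanding both occurrences of $\Gamma_{1,\cI}$ by the first step (with $W \mapsto (1-A)W$ in the middle factor), this unfolds to
$$\sum_{\bk, \bl \in \cI} (e_{\bk} \ast e_{\bl})\, A^{\dep(\bk)} (1-A)^{\wt(\bl)}\, W^{\wt(\bk) + \wt(\bl)} \;=\; \sum_{\bm \in \cI} e_{\bm}\, W^{\wt(\bm)}.$$

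The final step is combinatorial: compare the coefficient of $e_{\bm}$ for each fixed $\bm = (m_{1}, \ldots, m_{r})$. In the standard stuffle description, each pair $(\bk, \bl)$ contributing to $e_{\bm}$ corresponds to a labeling of every position $j \in \{1, \ldots, r\}$ as ``from $\bk$ only'', ``from $\bl$ only'', or ``merged, $m_{j} = k + l$'' with some $k, l \geq 1$. The sum over all such pairs therefore factorises position by position, and the contribution of position $j$ is
$$A + (1-A)^{m_{j}} + A \sum_{l=1}^{m_{j}-1} (1-A)^{l} \;=\; A \cdot \frac{1-(1-A)^{m_{j}}}{A} + (1-A)^{m_{j}} \;=\; 1$$
by the geometric series. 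Hence the coefficient of $e_{\bm}$ on the left-hand side equals $\prod_{j=1}^{r} 1 = 1$, matching the right-hand side.

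The main obstacle is really only the opening identity $\Gamma_{1,\cI}(W) = \sum_{\bk} e_{\bk} W^{\wt(\bk)}$, which packages the Hopf-algebraic content of $\frH^1_{\ast}$; it is however standard and can be invoked from \cite{Hof} or from a direct ODE check as above. Beyond that, the proof rests on nothing more than the elementary telescoping identity $A + (1-A)^{m} + A \sum_{l=1}^{m-1}(1-A)^{l} = 1$.
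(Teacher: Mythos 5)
Your argument is correct. Note, however, that the paper itself gives no proof of this proposition: it simply cites Hirose--Murahara--Saito \cite[Proposition 3.1]{HMS1}, so yours is a genuinely self-contained alternative rather than a variant of an in-paper argument. Your route --- first identifying $\Gamma_{1,\cI}(W)=\exp_{\ast}\bigl(\sum_{k\ge1}\tfrac{e_k}{k}W^k\bigr)$ with $\sum_{\bk\in\cI}e_{\bk}W^{\wt(\bk)}$ via the formal ODE $F'=\bigl(\sum_k e_kW^{k-1}\bigr)\ast F$, then reducing the proposition (after $\ast$-multiplying by the invertible series $\Gamma_{1,\cI}((1-A)W)$) to the position-by-position stuffle count with the telescoping factor $A+(1-A)^{m_j}+A\sum_{l=1}^{m_j-1}(1-A)^l=1$ --- is sound: the bijection between terms of $e_{\bk}\ast e_{\bl}$ contributing to $e_{\bm}$ and labellings of the positions of $\bm$ as ``from $\bk$'', ``from $\bl$'', or ``merged with a split $m_j=k+l$'' is exactly the quasi-shuffle description, and the exponents $A^{\dep(\bk)}$, $(1-A)^{\wt(\bl)}$, $W^{\wt(\bk)+\wt(\bl)}$ factor over positions as you claim. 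Two small points worth making explicit if this were written up: the quotient in the statement is taken with respect to the harmonic product $\ast$ (which your reduction tacitly uses), and the substitution $W\mapsto(1-A)W$ used to expand the middle factor is a continuous ring homomorphism $\frH^1_{\ast}\bbra{W}\to\frH^1_{\ast}[A]\bbra{W}$, so the $A=1$ expansion may indeed be specialized there. What your approach buys is that the key generating-function input of Section 2 becomes elementary and self-contained (only Hoffman's quasi-shuffle structure and invertibility of power series with constant term $1$ are used), at the cost of a page of bookkeeping that the authors avoid by citation.
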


\begin{proof}
See \cite[Proposition 3.1]{HMS1}.
\end{proof}

\begin{prop}\label{prop:St}
In $\frH^1[t, A]\bbra{W}$, we have
\begin{equation*}
\sum_{\bk \in \cI}S^t(e_{\bk})A^{\dep(\bk)}W^{\wt(\bk)}
=\frac{\Gamma_{1,\cI}((1-tA)W)}{\Gamma_{1,\cI}((1-(1-t)A)W)}.
\end{equation*}
\end{prop}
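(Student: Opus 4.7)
The plan is to reduce the identity directly to Proposition~\ref{lem:HMS} by unfolding the definition of the interpolation operator $S^{t}$ and then performing a change of variables in the resulting generating function.

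First, I would expand $S^{t}(e_{\bk}) = \sum_{\bl \preceq \bk} e_{\bl}\,t^{\dep(\bk)-\dep(\bl)}$ by its definition and interchange the order of summation. Since $\wt(\bl) = \wt(\bk)$ whenever $\bl \preceq \bk$, this yields
\[
\sum_{\bk \in \cI} S^{t}(e_{\bk})\, A^{\dep(\bk)} W^{\wt(\bk)}
= \sum_{\bl \in \cI} e_{\bl}\, W^{\wt(\bl)} \sum_{\bk \succeq \bl} t^{\dep(\bk)-\dep(\bl)}\, A^{\dep(\bk)},
\]
where the inner sum ranges over refinements of $\bl$.

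Next, I would evaluate the inner sum combinatorially. Writing $\bl = (l_1, \ldots, l_s)$, every refinement $\bk \succeq \bl$ is obtained by independently choosing, for each $i$, a composition of $l_i$ into positive parts; since $l_i$ admits exactly $\binom{l_i - 1}{m - 1}$ compositions into $m$ parts, the inner sum factors over $i$ as
\[
\prod_{i=1}^{s}\sum_{m\ge 1}\binom{l_i-1}{m-1} t^{m-1} A^{m}
= \prod_{i=1}^{s} A\,(1+tA)^{l_i - 1}
= A^{\dep(\bl)}(1+tA)^{\wt(\bl)-\dep(\bl)}.
\]
Substituting this back identifies the left-hand side of the proposition with $\sum_{\bl \in \cI} e_{\bl}\, A^{\dep(\bl)}(1+tA)^{\wt(\bl)-\dep(\bl)} W^{\wt(\bl)}$.

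Finally, I would recognize this last expression as a specialization of Proposition~\ref{lem:HMS}: applying that proposition with the substitution $(A,W) \mapsto (A/(1+tA),\,(1+tA)W)$ makes its left-hand side coincide with the above sum (since $A'^{\dep}W'^{\wt} = A^{\dep}(1+tA)^{\wt-\dep}W^{\wt}$), while the factor $(1-A)W$ in the denominator of its right-hand side transforms via $(1-A/(1+tA))(1+tA) = 1-(1-t)A$, producing the asserted quotient of $\Gamma_{1,\cI}$-values. The main obstacle will be the careful bookkeeping of this substitution---checking that $1+tA$ is invertible in the appropriate coefficient ring so that the change of variables is legitimate in $\frH^{1}[t,A]\bbra{W}$, and verifying that the arguments of $\Gamma_{1,\cI}$ in the numerator and denominator match the displayed right-hand side exactly.
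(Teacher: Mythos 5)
Your argument is essentially the paper's own proof: unfold $S^t$, interchange the two summations, count the refinements of a fixed $\bl$ entrywise via compositions to get $\sum_{\bl\in\cI}e_{\bl}A^{\dep(\bl)}(1+tA)^{\wt(\bl)-\dep(\bl)}W^{\wt(\bl)}$, and then invoke Proposition~\ref{lem:HMS} under the substitution $(A,W)\mapsto\bigl(A/(1+tA),(1+tA)W\bigr)$. The composition count is right, the substitution is harmless (every monomial $A^{\dep}W^{\wt}$ with $\dep\le\wt$ is sent to the polynomial $A^{\dep}(1+tA)^{\wt-\dep}W^{\wt}$, so no inverse of $1+tA$ survives), and the denominator does become $\Gamma_{1,\cI}((1-(1-t)A)W)$, since $(1-\frac{A}{1+tA})(1+tA)=1-(1-t)A$.

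However, the one step you explicitly deferred---checking that the arguments of $\Gamma_{1,\cI}$ ``match the displayed right-hand side exactly''---is precisely where the matching fails: the substitution forces the numerator to be $\Gamma_{1,\cI}((1+tA)W)$, not $\Gamma_{1,\cI}((1-tA)W)$ as displayed. This is not an error in your computation but an apparent sign misprint in the statement (the paper's proof commits the same silent slip in its last line). Indeed, at $t=1$ the coefficient of $W^2$ on the left-hand side is $e_{(2)}(A+A^2)+e_{(1,1)}A^2$, whereas the displayed right-hand side gives $e_{(2)}(-A+A^2)+e_{(1,1)}A^2$; moreover Lemma~\ref{lem:gen_func_fmh}, which is deduced from this proposition by applying $Z_{\le N}$, has denominator $(1-(1+tA)W)_N$, exactly what the $(1+tA)$ numerator produces (the printed numerator would instead yield $(1-(1-tA)W)_N$ there). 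So to make your write-up complete you should carry out the deferred verification and either state and prove the corrected identity with $\Gamma_{1,\cI}((1+tA)W)$ in the numerator, or note explicitly that the printed numerator cannot be obtained from Proposition~\ref{lem:HMS} and is contradicted by the low-order expansion; asserting that the substitution ``produces the asserted quotient'' as displayed leaves a genuine mismatch unaddressed.
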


\begin{proof}
We have
\begin{align*}
\sum_{\bk \in \cI}S^t(e_{\bk})A^{\dep(\bk)}W^{\wt(\bk)}
&=\sum_{\bk \in \cI}\sum_{\bl \preceq \bk}
e_{\bl}t^{\dep(\bk)-\dep(\bl)}A^{\dep(\bk)}W^{\wt(\bk)}\\
&=\sum_{\bl \in \cI}e_{\bl}\sum_{\bl \preceq \bk}t^{\dep(\bk)-\dep(\bl)}A^{\dep(\bk)}W^{\wt(\bk)}\\
&=\sum_{\bl \in \cI}e_{\bl}(AW)^{\dep(\bl)}(W+tAW)^{\wt(\bl)-\dep(\bl)}\\
&=\frac{\Gamma_{1, \cI}((1-tA)W)}{\Gamma_{1,\cI}((1-(1-t)A)W)}.
\end{align*}
Here, we use Proposition \ref{lem:HMS} to obtain the final equality.
\end{proof}

\begin{cor}\label{cor:t-ham-zeta}
In $\cZ[t,A, T]\bbra{W}$, we have
\begin{equation*}
\sum_{\bk \in \cI}\zeta^{t, \ast}(\bk; T)A^{\dep(\bk)}W^{\wt(\bk)}
=\frac{\Gamma_{1}((1-tA)W)}{\Gamma_{1}((1-(1-t)A)W)}\exp(ATW).
\end{equation*}
\end{cor}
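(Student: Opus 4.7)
The plan is to apply the regularization map $Z^{\ast}_{T}\colon \frH^{1}_{\ast}\to\bR[T]$ to the identity of Proposition~\ref{prop:St}. Since $Z^{\ast}_{T}$ is $\bQ[t,A]\bbra{W}$-linear and acts coefficient-wise on $A^{\dep(\bk)}W^{\wt(\bk)}$, the defining relation $\zeta^{t,\ast}(\bk;T)=Z^{\ast}_{T}(S^{t}(e_{\bk}))$ immediately converts the left-hand side of Proposition~\ref{prop:St} into the generating function $\sum_{\bk\in\cI}\zeta^{t,\ast}(\bk;T)A^{\dep(\bk)}W^{\wt(\bk)}$ appearing in the corollary.

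For the right-hand side, the decisive fact is that $Z^{\ast}_{T}$ is a $\bQ$-algebra homomorphism with respect to the harmonic product $\ast$, so it intertwines $\exp_{\ast}$ with the ordinary exponential:
\begin{equation*}
Z^{\ast}_{T}\bigl(\Gamma_{1,\cI}(xW)\bigr)
=\exp\!\biggl(\sum_{k=1}^{\infty}\frac{Z^{\ast}_{T}(e_{k})}{k}(xW)^{k}\biggr)
=\exp\!\biggl(\sum_{k=1}^{\infty}\frac{\zeta^{\ast}(k;T)}{k}(xW)^{k}\biggr).
\end{equation*}
Using $\zeta^{\ast}(1;T)=T$, I would peel off the $k=1$ contribution, which is $\exp(TxW)$, and identify the remaining $k\ge 2$ factor with $\Gamma_{1}(xW)$. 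Applying this to the numerator with $x=1-tA$ and to the denominator with $x=1-(1-t)A$ in Proposition~\ref{prop:St}, then forming the ratio, the two $T$-linear exponential pieces combine into a single exponential whose exponent reduces to $ATW$ after matching the sign conventions from Proposition~\ref{prop:St}. This produces the factor $\exp(ATW)$ of the stated formula while the $\Gamma_{1}$-quotient drops out intact.

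The one point requiring genuine care is the handling of the $k=1$ regularization term, since $e_{1}\notin\frH^{0}$ and $Z^{\ast}_{T}(e_{1})=T$ is exactly the regularization parameter; the separation of this term is what produces the extra $\exp(ATW)$ factor that is not visible in Proposition~\ref{prop:St} itself. Every other step is a mechanical consequence of the $\ast$-algebra-homomorphism property of $Z^{\ast}_{T}$ applied to the already-established generating-function identity.
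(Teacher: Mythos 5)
Your method is exactly the paper's: the official proof is the one-line remark that the corollary follows from Proposition \ref{prop:St} because $Z^{\ast}_{T}$ respects the harmonic product, and your write-up is a fleshed-out version of precisely that, with the correct observation that the $\exp(ATW)$ factor is produced by the $k=1$ (regularization) term $\zeta^{\ast}(1;T)=T$.

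However, the one step you leave vague --- ``the exponent reduces to $ATW$ after matching the sign conventions from Proposition \ref{prop:St}'' --- is exactly where the argument does not go through literally, and you should make it explicit. First, $\Gamma_1$ as defined in Section 2 already contains the $k=1$ term, so ``the remaining $k\ge2$ factor'' that you call $\Gamma_1(xW)$ is really its $T$-free part; your reading is the one that makes the displayed formula sensible, but it is not the printed definition. Second, peeling off the $k=1$ contributions from Proposition \ref{prop:St} as printed gives the exponent $TW\{(1-tA)-(1-(1-t)A)\}=(1-2t)ATW$, not $ATW$. The factor $\exp(ATW)$ only appears if the numerator of Proposition \ref{prop:St} is $\Gamma_{1,\cI}((1+tA)W)$, and a direct check shows that this is the correct form: the coefficient of $AW$ on the left-hand side of Proposition \ref{prop:St} is $e_1$, whereas the printed right-hand side gives $(1-2t)e_1$; likewise Lemma \ref{lem:gen_func_fmh}, obtained from the same proposition via $e_k\mapsto\zeta^{}_{\le N}(k)$, has denominator $(1-(1+tA)W)_N$, matching the $(1+tA)$ form. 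So your ``sign matching'' is in fact the correction of a typo in the source: with numerator $(1+tA)W$ the exponent becomes $TW\{(1+tA)-(1-(1-t)A)\}=ATW$ and your argument closes (and, reassuringly, in the later application in Corollary \ref{cor:t-sym} the $\exp(\pm ATW)$ factors cancel, so nothing downstream is affected). As it stands, your proposal asserts rather than derives the key exponent, so you should either redo the small computation with the corrected numerator or note explicitly that the printed Proposition \ref{prop:St} yields $(1-2t)ATW$ and explain the discrepancy.
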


\begin{proof}
This corollary comes immediately from Proposition \ref{prop:St} and the fact that $Z^{\ast}_{T}$ satisfies the harmonic relation.
\end{proof}

\begin{cor}\label{cor:t-sym}
In $\overline{\cZ}[t,A]\bbra{W}$, we have
\begin{equation*}
\sum_{\bk \in \cI}\zeta^{t}_{\cS}(\bk)A^{\dep(\bk)}W^{\wt(\bk)}
=1.
\end{equation*}
\end{cor}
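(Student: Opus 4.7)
My plan is to introduce the intermediate ``$\ast$-regularized interpolation''
\begin{equation*}
\zeta^{t,\ast}_{\cS}(\bk) \coloneqq \sum_{i=0}^{r}(-1)^{k_{i+1}+\cdots+k_{r}}\zeta^{t,\ast}(k_{1},\ldots,k_{i};T)\,\zeta^{t,\ast}(k_{r},\ldots,k_{i+1};T),
\end{equation*}
and to show that its generating function equals $1$ modulo $\zeta(2)\cZ$. The congruence $\zeta^{t,\ast}_{\cS}(\bk)\equiv \zeta^{t}_{\cS}(\bk)\pmod{\zeta(2)\cZ}$ is a bookkeeping exercise: expanding each $\zeta^{t,\ast}(\cdot;T)$ as $\sum_{\bullet\preceq\cdot}\zeta^{\ast}(\bullet;T)t^{\dep(\cdot)-\dep(\bullet)}$, swapping sums, and using that a coarsening $\bl\preceq \bk$ equipped with a split position $j$ is the same datum as a split $\bk=(\bk_{\le i},\bk_{>i})$ together with independent coarsenings of each half (because every comma of $\bl$ descends uniquely from a comma of $\bk$), one obtains $\zeta^{t,\ast}_{\cS}(\bk)=\sum_{\bl \preceq \bk}\zeta^{\ast}_{\cS}(\bl)t^{\dep(\bk)-\dep(\bl)}$, which reduces to $\zeta^{t}_{\cS}(\bk)$ modulo $\zeta(2)\cZ$ since $\zeta^{\ast}_{\cS}(\bl)\equiv \zeta_{\cS}(\bl)$.

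Given this reduction, I would next factor the generating function of $\zeta^{t,\ast}_{\cS}$. Splitting $\bk=(\ba,\bb)$ at position $i$ and using that reversal is a bijection on $\cI$ preserving $\dep$ and $\wt$, the factorization built into the definition of $\zeta^{t,\ast}_{\cS}$ yields
\begin{equation*}
\sum_{\bk \in \cI}\zeta^{t,\ast}_{\cS}(\bk)A^{\dep(\bk)}W^{\wt(\bk)}
=\Bigl(\sum_{\ba \in \cI}\zeta^{t,\ast}(\ba;T)A^{\dep(\ba)}W^{\wt(\ba)}\Bigr)\Bigl(\sum_{\bb \in \cI}\zeta^{t,\ast}(\bb;T)A^{\dep(\bb)}(-W)^{\wt(\bb)}\Bigr),
\end{equation*}
and Corollary \ref{cor:t-ham-zeta} evaluates each factor. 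The $\exp(\pm ATW)$ contributions cancel, leaving the quotient $\Gamma_{1}((1-tA)W)\Gamma_{1}(-(1-tA)W)$ over $\Gamma_{1}((1-(1-t)A)W)\Gamma_{1}(-(1-(1-t)A)W)$.

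Finally, for any formal variable $V$,
\begin{equation*}
\Gamma_{1}(V)\Gamma_{1}(-V)
=\exp\Bigl(\sum_{k\ge 1}\frac{\zeta^{\ast}(k;T)}{k}(V^{k}+(-V)^{k})\Bigr)
=\exp\Bigl(\sum_{m\ge 1}\frac{\zeta(2m)}{m}V^{2m}\Bigr),
\end{equation*}
since the odd terms (and in particular the $\zeta^{\ast}(1;T)=T$ contribution) vanish upon symmetrization and $\zeta^{\ast}(2m;T)=\zeta(2m)$. Euler's evaluation exhibits each $\zeta(2m)$ as a rational multiple of $\zeta(2)^{m}$, placing the exponent in $\zeta(2)\cZ\bbra{V}$, so $\Gamma_{1}(V)\Gamma_{1}(-V)\equiv 1\pmod{\zeta(2)\cZ}$. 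Both numerator and denominator therefore reduce to $1$, and the generating function equals $1$ in $\overline{\cZ}[t,A]\bbra{W}$. The step I expect to require the most care is the combinatorial reindexing underlying the congruence $\zeta^{t,\ast}_{\cS}(\bk)\equiv \zeta^{t}_{\cS}(\bk)$ and its avatar in the factorization, where one must check that the signs, the $t$-powers, and the $A$-powers all separate cleanly across the two halves; once that is settled, the remaining evaluation through Corollary \ref{cor:t-ham-zeta} and Euler's formula is routine.
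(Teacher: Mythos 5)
Your proof is correct and takes essentially the same route as the paper: expand $\zeta^{t}_{\cS}$ through the $t$-interpolated $\ast$-regularized values, factor the generating function via Corollary \ref{cor:t-ham-zeta} (the $\exp(\pm ATW)$ factors cancelling), and conclude from $\Gamma_1(V)\Gamma_1(-V)\equiv 1 \pmod{\zeta(2)\cZ}$. The only cosmetic difference is that the paper invokes the sine-product identity \eqref{eq:rec_gamma} for this last congruence, while you derive it directly from the exponential expression for $\Gamma_1$ and Euler's evaluation of $\zeta(2m)$, which is the same fact.
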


\begin{proof}
By the definition of $\zeta^{t}_{\cS}(\bk)$ and Corollary \ref{cor:t-ham-zeta}, we have
\begin{align*}
&\sum_{\bk \in \cI}\zeta^{t}_{\cS}(\bk)A^{\dep(\bk)}W^{\wt(\bk)}\\
&=\sum_{\bk, \bl \in \cI}(-1)^{\wt(\bl)}\zeta^{t, \ast}(\bk;T)\zeta^{t, \ast}(\overleftarrow{\bl}; T)A^{\dep(\bk)+\dep(\bl)}W^{\wt(\bk)+\wt(\bl)}\\
&=\frac{\Gamma_{1}((1-tA)W)}{\Gamma_{1}((1-(1-t)A)W)}
\cdot \frac{\Gamma_{1}(-(1-tA)W)}{\Gamma_{1}(-(1-(1-t)A)W)}.
\end{align*}
Here, for an index $\bl=(l_1,\ldots,l_s)$, the symbol $\overleftarrow{\bl}$ stands the index $(l_s,\ldots,l_1)$ (set $\overline{\varnothing} \coloneqq \varnothing$).
Thus, our desired formula follows from the well-known formula in  $\overline{\cZ}\bbra{W}$: 
\begin{equation}\label{eq:rec_gamma}
\Gamma_1(W)\Gamma_1(-W)=\frac{\pi W}{\sin\pi W} \equiv1. \qedhere
\end{equation}
\end{proof}

\begin{defn}
For two indices $\bk, \bl$ and a positive integer $a$, we define an element $K^t\biggl(\begin{matrix} \bk \\ \bl \end{matrix}; a\biggr)$ inductively by
\begin{align*}
&K^t\biggl(\begin{matrix} \bk \\ \varnothing \end{matrix}; a\biggr)
\coloneqq S^{t}(e_{\bk,a}), \qquad 
K^t\biggl(\begin{matrix} \varnothing \\ \bl \end{matrix}; a\biggr)
\coloneqq S^{1-t}(e_{\bl,a}), \\
&K^t\biggl(\begin{matrix} \bk \\ \bl,l \end{matrix}; k\biggr)
+K^t\biggl(\begin{matrix} \bk, k \\ \bl \end{matrix}; l\biggr)
=S^{t}(e_{\bk, k})\ast S^{1-t}(e_{\bl,l}) \ (k, l \in \bZ_{\ge1}).
\end{align*}
Note that $\ast$ is extended $\bQ[t]$-linearly, and if $\bk=(k_1, \ldots, k_r), \bl=(l_1, \ldots, l_s), a\ge2, N \ge1$, we set
\begin{equation*}
\zeta^t_{\le N}\biggl(\begin{matrix} \bk \\ \bl \end{matrix}; a\biggr)
\coloneqq
Z_{\le N}\biggl(K^t\biggl(\begin{matrix} \bk \\ \bl \end{matrix}; a\biggr)\biggr)
=\sum_{\substack{1\le m_1 \le \cdots \le m_r \le q \le N \\ 1\le n_1 \le \cdots \le n_s \le q \le N}}
\frac{t^{r+1-\#\{m_1, \ldots, m_r, q\}}}{m^{k_1}_1\cdots m^{k_r}_r}
\frac{1}{q^a}
\frac{(1-t)^{s+1-\#\{n_1, \ldots, n_s, q\}}}{n^{l_1}_1\cdots n^{l_s}_s}
\end{equation*}
and 
\begin{align*}
\zeta^t\biggl(\begin{matrix} \bk \\ \bl \end{matrix}; a\biggr)
&\coloneqq
\lim_{N \to \infty}Z_{\le N}\biggl(K^t\biggl(\begin{matrix} \bk \\ \bl \end{matrix}; a\biggr)\biggr)
=Z\biggl(K^t\biggl(\begin{matrix} \bk \\ \bl \end{matrix}; a\biggr)\biggr)\\
&=\sum_{\substack{1\le m_1 \le \cdots \le m_r \le q\\ 1\le n_1 \le \cdots \le n_s \le q}}
\frac{t^{r+1-\#\{m_1, \ldots, m_r, q\}}}{m^{k_1}_1\cdots m^{k_r}_r}
\frac{1}{q^a}
\frac{(1-t)^{s+1-\#\{n_1, \ldots, n_s, q\}}}{n^{l_1}_1\cdots n^{l_s}_s}
\end{align*}
which coincides with Schur MZV of anti-hook type as $t \to 0$.
\end{defn}

For a non-empty index $\bl=(l_1, \ldots, l_s)$ and $0 \le j \le s$, $\bl_{[j]} \coloneqq (l_1, \ldots, l_j)$ and $\bl^{[j]} \coloneqq (l_{j+1}, \ldots, l_s)$ and $\varnothing_{[0]}=\varnothing^{[0]} \coloneqq \varnothing$. The same method in \cite{HMS1} proves the next lemmas, so we omit the proof. 

\begin{lem}
For indices $\bk, \bl$ and a positive integer $a$, we have
\begin{equation}
K^t\biggl(\begin{matrix} \bk \\ \overleftarrow{\bl} \end{matrix}; a\biggr)
=\sum_{j=0}^{\dep(\bl)}(-1)^{j}S^t(e_{\bk, a, \bl_{[j]}})\ast S^{1-t}(e_{\overleftarrow{\bl^{[j]}}}).
\end{equation}
\end{lem}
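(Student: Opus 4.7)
The plan is to prove the identity by induction on $s:=\dep(\bl)$. When $s=0$ we have $\bl=\varnothing$, so the left-hand side equals $S^t(e_{\bk,a})$ by the first defining relation of $K^t$, while only the $j=0$ summand contributes to the right-hand side and gives $S^t(e_{\bk,a,\varnothing})\ast S^{1-t}(e_\varnothing)=S^t(e_{\bk,a})\ast 1=S^t(e_{\bk,a})$. This dispatches the base case.

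For the inductive step I peel off the \emph{last} entry of $\overleftarrow{\bl}$, which is $l_1$, using the decomposition $\overleftarrow{\bl}=(\overleftarrow{\bl^{[1]}},l_1)$. Applying the third defining relation of $K^t$ with $(\bk,k,\bl,l)$ taken to be $(\bk,a,\overleftarrow{\bl^{[1]}},l_1)$ gives
\begin{equation*}
K^t\biggl(\begin{matrix}\bk\\ \overleftarrow{\bl}\end{matrix};a\biggr)
=S^t(e_{\bk,a})\ast S^{1-t}(e_{\overleftarrow{\bl}})
-K^t\biggl(\begin{matrix}\bk,a\\ \overleftarrow{\bl^{[1]}}\end{matrix};l_1\biggr),
\end{equation*}
and the depth of the lower row of the remaining $K^t$ has been reduced from $s$ to $s-1$, so the induction hypothesis applies to it.

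Applying the hypothesis with $\bk,\bl,a$ replaced by $(\bk,a),\bl^{[1]},l_1$, and then using the elementary identities $((\bk,a),l_1,(\bl^{[1]})_{[j]})=(\bk,a,\bl_{[j+1]})$ and $(\bl^{[1]})^{[j]}=\bl^{[j+1]}$ together with the reindexing $j\mapsto j-1$, one obtains
\begin{equation*}
K^t\biggl(\begin{matrix}\bk,a\\ \overleftarrow{\bl^{[1]}}\end{matrix};l_1\biggr)
=-\sum_{j=1}^{s}(-1)^{j}\,S^t(e_{\bk,a,\bl_{[j]}})\ast S^{1-t}(e_{\overleftarrow{\bl^{[j]}}}).
\end{equation*}
Substituting this back into the previous display and absorbing the leading term $S^t(e_{\bk,a})\ast S^{1-t}(e_{\overleftarrow{\bl}})$ as the $j=0$ summand yields the desired alternating sum.

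Because the $K^t$-recursion is already tailored to reduce $\dep(\bl)$ by one, I do not expect a genuine conceptual obstacle: the whole argument is an induction with careful bookkeeping. The one thing to watch is that the single minus sign coming from rearranging the defining relation combines correctly with the signs produced by the shift $j\mapsto j-1$ in the inductive hypothesis, so that the alternating pattern on the right-hand side emerges cleanly.
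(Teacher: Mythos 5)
Your proof is correct: the induction on $\dep(\bl)$, peeling off $l_1$ via the third defining relation of $K^t$ and reindexing, is exactly the standard argument, and the sign bookkeeping checks out. The paper itself omits the proof, deferring to the method of Hirose--Murahara--Saito, and your argument is precisely that intended one.
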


\begin{lem}
For indices $\bk, \bl$ and a positive integer $a$, we have
\begin{equation*}
S^t(e_{\bk, a, \bl})
=\sum_{j=0}^{\dep(\bl)}(-1)^{j}K^t\biggl(\begin{matrix} \bk \\ \overleftarrow{\bl_{[j]}} \end{matrix}; a\biggr)
\ast S^{t}(e_{\bl^{[j]}}).
\end{equation*}
\end{lem}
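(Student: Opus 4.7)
The plan is to derive the identity directly from the preceding lemma together with an auxiliary vanishing identity. I would substitute the formula from the preceding lemma into each $K^{t}\biggl(\begin{matrix} \bk \\ \overleftarrow{\bl_{[i]}} \end{matrix}; a\biggr)$ on the right-hand side and then swap the order of summation. The right-hand side rearranges to
\begin{equation*}
\sum_{j=0}^{s} S^{t}(e_{\bk, a, \bl_{[j]}}) \ast \left[\sum_{i=j}^{s} (-1)^{i-j}\, S^{1-t}(e_{(l_i, \ldots, l_{j+1})}) \ast S^{t}(e_{(l_{i+1}, \ldots, l_s)})\right].
\end{equation*}
The $j = s$ term contributes $S^{t}(e_{\bk, a, \bl}) \ast 1 = S^{t}(e_{\bk, a, \bl})$, which is the desired left-hand side. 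For each $0 \le j < s$, setting $\boldsymbol{m} = (l_{j+1}, \ldots, l_s)$ of depth $n = s - j \ge 1$, the inner bracket is exactly $\sum_{k=0}^{n} (-1)^{k}\, S^{1-t}(e_{\overleftarrow{\boldsymbol{m}_{[k]}}}) \ast S^{t}(e_{\boldsymbol{m}^{[k]}})$, so the argument reduces to showing that this expression vanishes in $\frH^{1}$.

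I would establish this auxiliary vanishing by induction on $n$. The base case $n = 1$ reduces to $S^{t}(e_{m_1}) - S^{1-t}(e_{m_1}) = e_{m_1} - e_{m_1} = 0$. For the inductive step I would apply the first-entry decomposition $S^{t}(e_{(k_1, \ldots, k_n)}) = e_{k_1}\cdot S^{t}(e_{(k_2, \ldots, k_n)}) + t\cdot S^{t}(e_{(k_1 + k_2, k_3, \ldots, k_n)})$ (together with its $S^{1-t}$ analogue) to both factors and then regroup by signs. A cleaner alternative, parallel to the proof of Corollary \ref{cor:t-sym}, is a generating-function argument: after summing against $W^{\wt(\boldsymbol{m})}$, the desired vanishing amounts to a telescoping cancellation obtained by pairing the generating function of Proposition \ref{prop:St} with its $(t, A) \mapsto (1-t, -A)$ twin. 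A multivariable refinement of that generating function, with one formal variable per entry of $\boldsymbol{m}$, then promotes the summed statement to the individual-$\boldsymbol{m}$ identity we need.

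The main obstacle is the sign bookkeeping for the auxiliary identity at the level of each individual $\boldsymbol{m}$, rather than only as a sum over indices of fixed depth and weight. Apart from that, the proof is a direct consequence of the preceding lemma, parallel to the method of \cite{HMS1} cited by the authors.
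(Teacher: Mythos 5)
Your main reduction is sound: substituting the preceding lemma (applied to each prefix $\bl_{[i]}$) into the right-hand side, using $(-1)^{i}(-1)^{j}=(-1)^{i-j}$ and exchanging the two sums is all valid, the $j=\dep(\bl)$ term gives $S^{t}(e_{\bk,a,\bl})$, and everything else hinges on the vanishing, for every nonempty index $\boldsymbol{m}$ of depth $n$, of
\begin{equation*}
\sum_{k=0}^{n}(-1)^{k}\,S^{1-t}\bigl(e_{\overleftarrow{\boldsymbol{m}_{[k]}}}\bigr)\ast S^{t}\bigl(e_{\boldsymbol{m}^{[k]}}\bigr).
\end{equation*}
This identity is true (it is the interpolated form of the antipode relation between $\zeta$- and $\zeta^{\star}$-type words, the $t=0$ case being the algebraic counterpart of \cite[Proposition 6]{IKOO}), so your route does lead to the lemma. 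Note, however, that it differs from what the authors have in mind: the paper omits the proof and points to the method of \cite{HMS1}, i.e.\ induction on $\dep(\bl)$ directly from the defining recursion of $K^{t}$, whereas you formally invert the first lemma and push all the content into the antipode identity.

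That shifted content is also where your write-up is thin. The displayed vanishing is the entire substance of the lemma, and you only prove the case $n=1$; the inductive step is asserted, and the ``sign bookkeeping'' you flag is precisely the work to be done (a cleaner induction uses the last-entry recursion defining $\ast$ together with the matching recursion for $S^{t}$, which is how the $t=0$ antipode relation is usually proved). The generating-function fallback does not work as described: Proposition \ref{prop:St} records only weight and depth, and since both $S^{t}$ and $\ast$ merge entries, there is no ``one formal variable per entry'' refinement compatible with these contractions --- setting one up amounts to redoing the combinatorial argument, so it cannot ``promote'' the summed statement to the individual-$\boldsymbol{m}$ identity. A clean way to close the gap without any new induction: sum the classical ($t=0$) antipode relation over all $\bl\preceq\boldsymbol{m}$ with weight $t^{\dep(\boldsymbol{m})-\dep(\bl)}$; every deconcatenation cut of such an $\bl$ sits at an unmerged gap of $\boldsymbol{m}$, so regrouping by that gap and using reversal-compatibility together with Yamamoto's composition law $S^{a}\circ S^{b}=S^{a+b}$ (so that $S^{1}\circ S^{-t}=S^{1-t}$, see \cite{Y}) yields exactly the displayed sum. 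With either of these completions your argument becomes a correct, if less direct, alternative to the induction the paper refers to.
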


For an index $\bk=(k_1, \ldots, k_r)$ and a positive integer $N$, we define elements $\zeta^{t}_{\le N}(\bk), \zeta^{t}_{=N}(\bk)$ and $\zeta^{t}_{<N}(\bk)$ of $\bQ[t]$ by
\begin{align*}
\zeta^{t}_{\le N}(\bk)
&\coloneqq 
\sum_{1\le n_1 \le \cdots \le n_r \le N}\frac{t^{r-\#\{n_1, \ldots, n_r\}}}{n^{k_1}_1\cdots n^{k_r}_r}, \\
\zeta^{t}_{< N}(\bk)
&\coloneqq 
\sum_{1\le n_1 \le \cdots \le n_r < N}\frac{t^{r-\#\{n_1, \ldots, n_r\}}}{n^{k_1}_1\cdots n^{k_r}_r}=\zeta^{t}_{\le N-1}(\bk),\\
\zeta^{t}_{=N}(\bk)
&\coloneqq \zeta^{t}_{\le N}(\bk)-\zeta^{t}_{<N}(\bk).
\end{align*}
Note that the usual multiple harmonic sum $\zeta_{\le N}(\bk)=\sum_{1\le n_1<\cdots<n_r\le N}\frac{1}{n^{k_1}_1\cdots n^{k_r}_r}$ coincides with $\zeta^{0}_{\le N}(\bk)$.

For a non-negative integer $N$ and a symbol $X$, set
\begin{equation*}
(X)_N
\coloneqq
\begin{cases}
X(X+1)\cdots(X+N-1) & \text{if $N\ge1$}, \\
1 & \text{if $N=0$}.
\end{cases}
\end{equation*}
\begin{lem}\label{lem:gen_func_fmh}
For positive integer $N$, we have the following identity in $\bQ[t, A]\bbra{W}$:
\begin{equation*}
\sum_{\bk \in \cI}\zeta^{t}_{\le N}(\bk)A^{\dep(\bk)}W^{\wt(\bk)}
=\frac{(1-(1-(1-t)A)W)_N}{(1-(1+tA)W)_N}.
\end{equation*}
\end{lem}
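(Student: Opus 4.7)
The plan is to expand the left-hand side termwise and identify it with a finite product that telescopes into the desired ratio of Pochhammer symbols.

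First I would isolate the inner sum. Writing $\bk = (k_1, \ldots, k_r)$, the depth-$r$ summand contributes
\[
A^r \sum_{1 \le n_1 \le \cdots \le n_r \le N} t^{r - \#\{n_1,\ldots,n_r\}} \prod_{i=1}^{r} \sum_{k_i \ge 1} \Bigl(\frac{W}{n_i}\Bigr)^{k_i}
= A^r \sum_{1 \le n_1 \le \cdots \le n_r \le N} t^{r - \#\{n_i\}} \prod_{i=1}^r \frac{W}{n_i - W}.
\]
This is a legitimate identity in $\bQ[t,A]\bbra{W}$ because every coefficient of $W^m$ involves only finitely many terms.

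Next I would group the weakly increasing tuples $n_1 \le \cdots \le n_r$ according to their underlying set of distinct values $m_1 < \cdots < m_s$ (with multiplicities $a_1, \ldots, a_s \ge 1$ summing to $r$). Setting $x_n \coloneqq W/(n-W)$ this gives, after summing over $r \ge 0$,
\[
\sum_{\bk \in \cI}\zeta^{t}_{\le N}(\bk)A^{\dep(\bk)}W^{\wt(\bk)}
= \prod_{n=1}^{N}\Bigl(1 + \sum_{a \ge 1}(At)^{a-1}A\, x_n^{a}\Bigr)
= \prod_{n=1}^{N}\Bigl(1 + \frac{A\,x_n}{1 - At\,x_n}\Bigr),
\]
since choosing whether or not $n$ is among the $m_i$ contributes independently at each level. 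The key bookkeeping is that the extra factor of $t$ for each \emph{repeat} of a value turns $a$ copies of $n$ into $(At)^{a-1}A \cdot x_n^a$, while the offset ``$-s$'' in the exponent $r-s$ exactly compensates.

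Then I would simplify each factor: a direct calculation gives
\[
1 + \frac{A\,x_n}{1 - At\,x_n}
= \frac{n - (1-(1-t)A)W}{n - (1+tA)W}.
\]
Since $(1-\alpha W)_N = \prod_{n=1}^{N}(n - \alpha W)$, taking the product over $n = 1,\ldots,N$ immediately yields the right-hand side of the lemma.

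I do not expect any serious obstacle; the only thing to watch is the combinatorial repackaging in the middle step, where one must check that the exponents of $A$, $t$, and $W$ all match up when passing from ordered repeated tuples to the product over distinct values. Everything else is formal manipulation in $\bQ[t,A]\bbra{W}$.
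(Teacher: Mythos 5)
Your proof is correct, but it takes a genuinely different route from the paper's. The paper deduces the lemma in one line from Proposition \ref{prop:St}: since the truncated evaluation map $e_{\bk}\mapsto\zeta_{\le N}(\bk)$ is a homomorphism for the harmonic product, one applies it to that generating-function identity and uses $\exp\bigl(\sum_{k\ge1}\zeta_{\le N}(k)W^k/k\bigr)=N!/(1-W)_N$, which converts each $\Gamma_{1,\cI}$-factor into a Pochhammer symbol. You instead work directly with the series definition of $\zeta^{t}_{\le N}(\bk)$: summing the geometric series in each $k_i$, regrouping the weakly increasing tuples by their distinct values and multiplicities (this is exactly where $t^{r-\#\{n_1,\ldots,n_r\}}$ becomes $t^{a-1}$ per value, and your exponent bookkeeping there is right), and recognizing the result as the finite product $\prod_{n=1}^{N}\bigl(n-(1-(1-t)A)W\bigr)/\bigl(n-(1+tA)W\bigr)$. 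The formal manipulations are legitimate in $\bQ[t,A]\bbra{W}$, since each coefficient of $W^m$ involves only the finitely many indices of weight $m$. What the paper's argument buys is brevity and uniformity with the non-truncated setting (the same harmonic-algebra identity serves both); what yours buys is self-containedness, avoiding Propositions \ref{lem:HMS} and \ref{prop:St} and the harmonic relation altogether, and it doubles as an independent check of the signs: your computation confirms the factor $(1+tA)$ in the denominator, in agreement with the lemma as stated (and indicating that the numerator $\Gamma_{1,\cI}((1-tA)W)$ printed in Proposition \ref{prop:St} should read $\Gamma_{1,\cI}((1+tA)W)$, which is the version the paper's one-line proof implicitly uses).
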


\begin{proof}
Since $\zeta^{}_{\le N}(\bk)$ satisfies the harmonic relation, we obtained the desired formula by Proposition \ref{prop:St} and the equality $\exp(\sum_{k\ge1}\zeta^{}_{\le N}(k)W^k/k)=N!/(1-W)_N$ in $\bQ\bbra{t}$ for any positive integer $N$.
\end{proof}

%%%%%%%%%%%%%%%%%%%%%%%%%%%%%%%%%%%%%%%%%%%%%%%%%%%%%%%%
%Section3%%%%%%%%%%%%%%%%%%%%%%%%%%%%%%%%%%%%%%%%%%%%%%%%%%%
%%%%%%%%%%%%%%%%%%%%%%%%%%%%%%%%%%%%%%%%%%%%%%%%%%%%%%%%
\section{Proof of Theorem \ref{thm:wtd SF1} and Theorem \ref{thm:wtd SF2} for $\cF=\cS$}

In this section, we prove Theorem \ref{thm:wtd SF1} and Theorem \ref{thm:wtd SF2} for $\cF=\cS$. For indeterminates $A, B, W$, set
\begin{equation*}
G^{\frac{1}{2}}_{\wtd}(A, B, W)
\coloneqq
\sum_{\substack{\bk, \bl \in \cI \\ a \ge2}}(2^{a-2}-1)\zeta^{\frac{1}{2}}
\biggl(
\begin{matrix}
\bk \\
\bl
\end{matrix}
;a
\biggr)
A^{\dep(\bk)}B^{\dep(\bl)}W^{\wt(\bk)+a+\wt(\bl)}
\end{equation*}
and 
\begin{equation*}
F^{\frac{1}{2}}_{\wtd, \cS}(A, B, W)
\coloneqq
\sum_{\substack{\bk, \bl \in \cI \\ a \ge2}}(2^{a-2}-1)\zeta^{\frac{1}{2}}_{\cS}(\bk, a, \bl)
A^{\dep(\bk)}B^{\dep(\bl)}W^{\wt(\bk)+a+\wt(\bl)}.
\end{equation*}
Then, these two generating functions satisfy the following relation.
\begin{lem}\label{lem:decomp}
In $\overline{\cZ}[A, B]\bbra{W}$, we have
\begin{equation*}
F^{\frac{1}{2}}_{\wtd, \cS}(A, B, W)
=G^{\frac{1}{2}}_{\wtd}(A, -B, W)+G^{\frac{1}{2}}_{\wtd}(B, -A, -W).
\end{equation*}
\end{lem}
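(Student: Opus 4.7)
The plan is to expand $\zeta^{\frac{1}{2}}_{\cS}(\bk, a, \bl)$ directly from the stuffle-regularized symmetric-MZV definition and reorganize, in the spirit of the proof of Corollary~\ref{cor:t-sym}. Since $\zeta^{\frac{1}{2}}_{\cS}(\bk, a, \bl) \equiv \zeta^{\frac{1}{2}, \ast}_{\cS}(\bk, a, \bl) \pmod{\zeta(2)\cZ}$ and this class is $T$-independent, I work with the $\ast$-regularized version. As established in the proof of Corollary~\ref{cor:t-sym}, this decomposes modulo $\zeta(2)\cZ$ as a signed sum over split positions $p \in \{0, \ldots, r+s+1\}$ in the index $(\bk, a, \bl) = (k_1, \ldots, k_r, a, l_1, \ldots, l_s)$, each term being a product $(-1)^{\text{right-weight}}\,\zeta^{\frac{1}{2}, \ast}(\text{left prefix}; T)\,\zeta^{\frac{1}{2}, \ast}(\overleftarrow{\text{right suffix}}; T)$. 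I separate these into two families by the location of the central weight $a$: family (II), with $p \ge r+1$, where $a$ belongs to the left factor, and family (I), with $p \le r$, where $a$ belongs to the reversed right factor.

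For family (II), write $p = r + 1 + j$ and split $\bl = (\bl', \bl'')$ with $\bl' = \bl_{[j]}$ and $\bl'' = \bl^{[j]}$. Applying to $S^{\frac{1}{2}}(e_{\bk, a, \bl'})$ the Section~2 lemma that expresses $S^t(e_{\bk, a, \bl})$ as an alternating sum of $K^t$-products with $S^t$-factors (and using that $Z^{\ast}_T$ is a stuffle homomorphism) rewrites the left factor as $\sum_{j'} (-1)^{j'} \zeta^{\frac{1}{2}}\biggl(\begin{matrix} \bk \\ \overleftarrow{\bl'_{[j']}} \end{matrix}; a\biggr)\,\zeta^{\frac{1}{2}, \ast}({\bl'}^{[j']}; T)$. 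Substituting back and relabeling $\bl \coloneqq \overleftarrow{\bl'_{[j']}}$ (so that the sign $(-1)^{j'}$ converts $B^{\dep(\bl)}$ into $(-B)^{\dep(\bl)}$), the family (II) contribution factorizes as $G^{\frac{1}{2}}_{\wtd}(A, -B, W)$ times two ancillary generating functions $\sum_{\boldsymbol{n}} \zeta^{\frac{1}{2}, \ast}(\boldsymbol{n}; T) B^{\dep(\boldsymbol{n})} W^{\wt(\boldsymbol{n})}$ and $\sum_{\boldsymbol{n}} \zeta^{\frac{1}{2}, \ast}(\overleftarrow{\boldsymbol{n}}; T) B^{\dep(\boldsymbol{n})} (-W)^{\wt(\boldsymbol{n})}$. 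By Corollary~\ref{cor:t-ham-zeta} at $t = \frac{1}{2}$ the $\Gamma_1$-ratio trivializes and these evaluate to $\exp(BTW)$ and $\exp(-BTW)$, whose product is $1$; hence family (II) contributes exactly $G^{\frac{1}{2}}_{\wtd}(A, -B, W)$.

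Family (I) is treated symmetrically by splitting $\bk = (\bk', \bk'')$ with $\bk' = \bk_{[p]}$, applying the same Section~2 lemma to $S^{\frac{1}{2}}(e_{\overleftarrow{\bl}, a, \overleftarrow{\bk''}})$ to decompose the right factor, and collecting. The combined sign from the SMZV definition and the $K^t$-expansion becomes the substitutions $W \mapsto -W$ and $A \mapsto -A$, while in the resulting Schur-type value $\overleftarrow{\bl}$ plays the role of the upper index and the reversed suffix of $\bk''$ plays the role of the lower index, producing the swap $A \leftrightarrow B$; the two ancillary exponentials cancel as before, yielding $G^{\frac{1}{2}}_{\wtd}(B, -A, -W)$. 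Summing families (I) and (II) gives the claim. The main obstacle is the careful bookkeeping of signs and index reversals, and the crucial structural point is that the cancellation of the $\exp(\pm BTW)$ and $\exp(\pm ATW)$ factors depends essentially on the choice $t = \frac{1}{2}$, since then $tA = (1-t)A$ and the $\Gamma_1$-ratio in Corollary~\ref{cor:t-ham-zeta} equals $1$.
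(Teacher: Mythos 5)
Your proof is correct and follows essentially the same route as the paper: you split the symmetric-sum definition of $\zeta^{\frac{1}{2}}_{\cS}(\bk,a,\bl)$ according to which side of the cut the central entry $a$ falls on, convert the $a$-side factor into the anti-hook values $\zeta^{\frac12}\bigl(\begin{smallmatrix}\bk\\ \bl\end{smallmatrix};a\bigr)$ via the Section~2 lemma expressing $S^t(e_{\bk,a,\bl})$ through $K^t$, and evaluate the two ancillary sums by Corollary~\ref{cor:t-ham-zeta}, with the outer factors cancelling (the paper phrases this cancellation through \eqref{eq:rec_gamma} modulo $\zeta(2)\cZ$ rather than through the trivialized $\Gamma_1$-ratios at $t=\frac12$). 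Only your closing remark overstates the role of $t=\frac12$: as the paper's later Remark (see \eqref{eq:decomp2}) shows, the same decomposition survives for general $t$ modulo $\zeta(2)\cZ$ precisely because of \eqref{eq:rec_gamma}, so $t=\frac12$ is not essential to the cancellation, only to the specific weights $2^{a-2}-1$ treated here.
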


\begin{proof}
By definition of $F^{\frac{1}{2}}_{\wtd, \cS}(A, B, W)$, we have
\begin{align*}
&F^{\frac{1}{2}}_{\wtd, \cS}(A,B,W)\\
&=\sum_{\substack{\bk, \bl \in \cI \\ a\ge2}}\sum_{j=0}^{\dep(\bl)}
(-1)^{\wt(\bl^{[j]})}(2^{a-2}-1)\zeta^{\frac{1}{2}, \ast}(\bk, a, \bl_{[j]})\zeta^{\frac{1}{2}, \ast}(\overleftarrow{\bl^{[j]}})
A^{\dep(\bk)}B^{\dep(\bl)}W^{\wt(\bk)+a+\wt(\bl)}\\
&\quad+\sum_{\substack{\bk, \bl \in \cI \\ a\ge2}}\sum_{i=0}^{\dep(\bk)}
(-1)^{\wt(\bk^{[i]}, \bl)+a}(2^{a-2}-1)\zeta^{\frac{1}{2}, \ast}(\bk_{[i]})\zeta^{\frac{1}{2}, \ast}(\overleftarrow{\bk^{[i]}, a, \bl})
A^{\dep(\bk)}B^{\dep(\bl)}W^{\wt(\bk)+a+\wt(\bl)}\\
&=\sum_{\substack{\bk, \bl_1, \bl_2 \in \cI \\ a\ge2}}(-1)^{\wt(\bl_2)}
(2^{a-2}-1)\zeta^{\frac{1}{2}, \ast}(\bk, a, \bl_1)\zeta^{\frac{1}{2}, \ast}(\overleftarrow{\bl_2})
A^{\dep(\bk)}B^{\dep(\bl_1)+\dep(\bl_2)}W^{\wt(\bk)+\wt(\bl_1)+\wt(\bl_2)+a}\\
&\quad+
\sum_{\substack{\bk_1, \bk_2, \bl \in \cI \\ a\ge2}}(-1)^{\wt(\bk_2)+\wt(\bl)+a}
(2^{a-2}-1)\zeta^{\frac{1}{2}, \ast}(\bk_1)\zeta^{\frac{1}{2}, \ast}(\overleftarrow{\bk_2, a, \bl})
A^{\dep(\bk_1)+\dep(\bk_2)}B^{\dep(\bl)}W^{\wt(\bk_1)+\wt(\bk_2)+\wt(\bl)+a}.
\end{align*}
Let $X_1$ (resp. $X_2$) denote the first (resp. second) sum above. From Corollary \ref{cor:t-ham-zeta} and \eqref{eq:rec_gamma}, we have
\begin{equation}\label{eq:X_1}
\begin{split}
X_1
&=\sum_{\substack{\bk, \bl \in \cI \\ a\ge2}}
(2^{a-2}-1)\zeta^{\frac{1}{2}, \ast}(\bk, a, \bl)A^{\dep(\bk)}B^{\dep(\bl)}W^{\wt(\bk)+\wt(\bl)}
\frac{\Gamma_1(-(1+\frac{B}{2})W)}{\Gamma_1(-(1-\frac{B}{2})W)}\\
&=\sum_{\substack{\bk, \bl \in \cI \\ a\ge2}}\sum_{j=0}^{\dep(\bl)}(-1)^{j}(2^{a-2}-1)
\zeta^{\frac{1}{2}}
\Bigl(
\begin{matrix}
\bk \\ \overleftarrow{\bl_{[j]}}
\end{matrix}
;a\Bigr)
\zeta^{\frac{1}{2}}(\bl^{[j]})A^{\dep(\bk)}B^{\dep(\bl)}W^{\wt(\bk)+\wt(\bl)+a}
\frac{\Gamma_1(-(1+\frac{B}{2})W)}{\Gamma_1(-(1-\frac{B}{2})W)}\\
&=G^{\frac{1}{2}}_{\wtd}(A, -B, W)\frac{\Gamma_1((1+\frac{B}{2})W)\Gamma_1(-(1+\frac{B}{2})W)}{\Gamma_1((1-\frac{B}{2})W)\Gamma_1(-(1-\frac{B}{2})W)}
\equiv G^{\frac{1}{2}}_{\wtd}(A, -B, W) \pmod{\zeta(2)\cZ}.
\end{split}
\end{equation}
By a similar calculation, we have
\begin{equation}\label{eq:X_2}
X_2 \equiv G^{\frac{1}{2}}_{\wtd}(B, -A, -W) \pmod{\zeta(2)\cZ}.
\end{equation}
From the equations \eqref{eq:X_1} and \eqref{eq:X_2}, we obtain the desired formula.
\end{proof}

\begin{lem}
In $\bR[A, B]\bbra{W}$, we have
\begin{equation*}
G^{\frac{1}{2}}_{\wtd}(A,B,W)
=\sum_{n=1}^{\infty}\frac{W^3(n-W)}{n(n-2W)}
\frac{(1-(1-\frac{A}{2})W)_{n-1}(1-(1-\frac{B}{2})W)_{n-1}}{(1-(1+\frac{A}{2})W)_n(1-(1+\frac{B}{2})W)_n}.
\end{equation*}
\end{lem}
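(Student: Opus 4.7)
The approach is to exploit the explicit double-sum definition of $\zeta^{\frac{1}{2}}\bigl(\begin{smallmatrix}\bk\\ \bl\end{smallmatrix};a\bigr)$, group the outer summation by the ``middle'' index $q$, and reduce everything to Lemma \ref{lem:gen_func_fmh} for $t=1/2$.

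First I will decompose the definition by grouping over $q\ge 1$. Setting $r=\dep(\bk)$ and splitting on whether the top index $m_r$ satisfies $m_r<q$ or $m_r=q$, the partial $\bk$-sum
\[
M_q(\bk)\coloneqq\sum_{1\le m_1\le\cdots\le m_r\le q}\frac{(1/2)^{r+1-\#\{m_1,\ldots,m_r,q\}}}{m_1^{k_1}\cdots m_r^{k_r}}
\]
simplifies to $\tfrac{1}{2}\bigl(\zeta^{\frac{1}{2}}_{\le q}(\bk)+\zeta^{\frac{1}{2}}_{<q}(\bk)\bigr)$: the case $m_r<q$ contributes $\zeta^{\frac{1}{2}}_{<q}(\bk)$ via the identity $\#\{m_1,\ldots,m_r,q\}=\#\{m_1,\ldots,m_r\}+1$, while the case $m_r=q$ contributes $\tfrac{1}{2}\zeta^{\frac{1}{2}}_{=q}(\bk)$, and these combine to the claimed average. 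The same identity (using $1-t=t=1/2$) holds for the $\bl$-sum, yielding
\[
\zeta^{\frac{1}{2}}\!\biggl(\begin{matrix}\bk\\ \bl\end{matrix};a\biggr)
=\sum_{q\ge1}\frac{1}{q^a}\cdot\frac{\zeta^{\frac{1}{2}}_{\le q}(\bk)+\zeta^{\frac{1}{2}}_{<q}(\bk)}{2}\cdot\frac{\zeta^{\frac{1}{2}}_{\le q}(\bl)+\zeta^{\frac{1}{2}}_{<q}(\bl)}{2}.
\]

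Next I substitute this into $G^{\frac{1}{2}}_{\wtd}(A,B,W)$ and interchange summations. Lemma \ref{lem:gen_func_fmh} with $t=1/2$ gives the generating function $\sum_{\bk}\zeta^{\frac{1}{2}}_{\le q}(\bk)A^{\dep(\bk)}W^{\wt(\bk)}=(1-(1-A/2)W)_q/(1-(1+A/2)W)_q$. Using the Pochhammer recursion $(x)_q=(x)_{q-1}(x+q-1)$ and the key cancellation $2q-(1-A/2)W-(1+A/2)W=2(q-W)$, the averaged expression for the $\bk$-sum collapses to
\[
\sum_{\bk}\frac{\zeta^{\frac{1}{2}}_{\le q}(\bk)+\zeta^{\frac{1}{2}}_{<q}(\bk)}{2}A^{\dep(\bk)}W^{\wt(\bk)}=\frac{(1-(1-A/2)W)_{q-1}(q-W)}{(1-(1+A/2)W)_q},
\]
and likewise for the $\bl$-sum after replacing $A$ by $B$.

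Finally, I evaluate the $a$-series as a difference of two geometric series:
\[
\sum_{a\ge2}(2^{a-2}-1)\biggl(\frac{W}{q}\biggr)^{\!a}
=\frac{W^2/q^2}{1-2W/q}-\frac{W^2/q^2}{1-W/q}
=\frac{W^3}{q(q-W)(q-2W)}.
\]
Multiplying the three factors together, one copy of $(q-W)$ cancels between the $(q-W)^2$ coming from the index sums and the $(q-W)$ in the denominator of the $a$-sum, producing exactly the claimed formula after renaming $q$ to $n$. The main obstacle is the case analysis in the first step: the combinatorial identity $\#\{m_1,\ldots,m_r,q\}=\#\{m_1,\ldots,m_r\}+[m_r<q]$ must be tracked carefully so that each power of $1/2$ is correctly assigned; once that is clear, everything else is routine manipulation of Pochhammer symbols and geometric series.
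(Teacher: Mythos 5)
Your proposal is correct and follows essentially the same route as the paper's own proof: grouping by the middle index, rewriting the inner sums at $t=1/2$ as $\tfrac12\bigl(\zeta^{\frac12}_{\le q}+\zeta^{\frac12}_{<q}\bigr)$, collapsing them via Lemma \ref{lem:gen_func_fmh} and the cancellation $x+y+2q-2=2(q-W)$ of the Pochhammer arguments, and evaluating the geometric series in $a$ to $\frac{W^3}{q(q-W)(q-2W)}$. The only difference is cosmetic bookkeeping of the factor $\tfrac14$ versus keeping $\tfrac12$ in each average.
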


\begin{proof}
By the definition of $F^{\frac{1}{2}}_{\wtd}(A, B, W)$, we have
\begin{align*}
&G^{\frac{1}{2}}_{\wtd}(A, B, W)\\
&=\sum_{\substack{\bk, \bl \in \cI \\ a\ge2}}(2^{a-2}-1)
\zeta^{\frac{1}{2}}
\Bigl(
\begin{matrix}
\bk \\ \bl
\end{matrix}
;a\Bigr)
A^{\dep(\bk)}B^{\dep(\bl)}W^{\wt(\bk+a+\wt)(\bl)}\\
&=\sum_{\substack{\bk, \bl \in \cI \\ a\ge2}}(2^{a-2}-1)\sum_{n=1}^{\infty}
\frac{(\zeta^{\frac{1}{2}}_{<n}(\bk)+\frac{1}{2}\zeta^{\frac{1}{2}}_{=n}(\bk))
(\zeta^{\frac{1}{2}}_{<n}(\bl)+\frac{1}{2}\zeta^{\frac{1}{2}}_{=n}(\bl))}{n^a}
A^{\dep(\bk)}B^{\dep(\bl)}W^{\wt(\bk)+a+\wt(\bl)}\\
&=\sum_{\substack{\bk, \bl \in \cI \\ a\ge2}}\frac{2^{a-2}-1}{4}\sum_{n=1}^{\infty}
\frac{(\zeta^{\frac{1}{2}}_{<n}(\bk)+\zeta^{\frac{1}{2}}_{\le n}(\bk))
(\zeta^{\frac{1}{2}}_{<n}(\bl)+\zeta^{\frac{1}{2}}_{\le n}(\bl))}{n^a}
A^{\dep(\bk)}B^{\dep(\bl)}W^{\wt(\bk)+a+\wt(\bl)}.
\end{align*}
From Lemma \ref{lem:gen_func_fmh}, we have
\begin{align*}
\sum_{\bk \in \cI}(\zeta^{\frac{1}{2}}_{<n}(\bk)+\zeta^{\frac{1}{2}}_{\le n}(\bk))A^{\dep(\bk)}W^{\wt(\bk)}
&=\frac{(1-(1-\frac{A}{2})W)_{n-1}}{(1-(1+\frac{A}{2})W)_{n-1}}+\frac{(1-(1-\frac{A}{2})W)_n}{(1-(1+\frac{A}{2})W)_n}\\
&=\frac{2(n-W)(1-(1-\frac{A}{2})W)_{n-1}}{(1-(1+\frac{A}{2})W)_n}.
\end{align*}
Since
\begin{equation*}
\sum_{a=2}^{\infty}(2^{a-2}-1)\biggl(\frac{W}{n}\biggr)^a
=\frac{W^3}{n(n-W)(n-2W)},
\end{equation*}
we obtain the desired formula.
\end{proof}

To prove Theorems \ref{thm:wtd SF1} and \ref{thm:wtd SF2} for $\cF=\cS$, we need to relate $G^{\frac{1}{2}}_{\wtd}(A, B, W)$ to $\psi_1(W)$ which is a generating function of Riemann zeta values $\zeta(k)$ defined by
\begin{equation}
\psi_1(W)
\coloneqq \sum_{k \ge2}\zeta(k)W^{k-2} \in \cZ\bbra{W}.
\end{equation}
Note that let 
\begin{equation*}
\psi(z) \coloneqq \frac{d}{dz}\log\Gamma(z) 
\end{equation*}
denote the digamma function, then $\psi_1(W)=-\psi(1-W)-\gamma$ ($\gamma$ is the Euler--Mascheroni constant).

\begin{prop}\label{prop:F^1/2_Dougall}
In $\bR\bbra{A,B,W}$, we have
\begin{multline*}
G^{\frac{1}{2}}_{\wtd}(A, B, W)\\
=\frac{1}{2(1-\frac{A}{2})(1-\frac{B}{2})}
\biggl\{
\psi_1(2W)+\psi_1\biggl(\frac{A+B}{2}W\biggr)-\psi_1\biggl(\biggl(1+\frac{A}{2}\biggr)W\biggr)-\psi_1\biggl(\biggl(1+\frac{B}{2}\biggr)W\biggr)
\biggr\}.
\end{multline*}
\end{prop}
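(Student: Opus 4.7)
The plan is to recognize the sum of the preceding lemma as a very-well-poised hypergeometric series and evaluate it using Dougall's classical ${}_{5}F_{4}$ summation. Write $u=(1-A/2)W$, $u'=(1-B/2)W$, $v=(1+A/2)W$, $v'=(1+B/2)W$. Reindexing the sum by $m=n-1$ and rewriting the rational factor $(m+1-W)/[(m+1)(m+1-2W)]$ through the Pochhammer identities $m+1-W=(1-W)(2-W)_m/(1-W)_m$, $m+1=(2)_m/(1)_m$, and $m+1-2W=(1-2W)(2-2W)_m/(1-2W)_m$, one obtains
\[
G^{\frac{1}{2}}_{\wtd}(A,B,W)=\frac{W^{3}(1-W)}{(1-2W)(1-v)(1-v')}\sum_{m\ge0}\frac{(2-W)_m(1)_m(1-2W)_m(1-u)_m(1-u')_m}{(1-W)_m(2)_m(2-2W)_m(2-v)_m(2-v')_m}.
\]
With $a=2-2W$, the ratio $(2-W)_m/(1-W)_m=(1+a/2)_m/(a/2)_m$ is exactly Dougall's very-well-poised pair.

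Next I would apply Dougall's ${}_{5}F_{4}$ summation with $b=1-u$, $c=1-u'$, $d=1-2W$. These choices yield $1+a-b=2-v$, $1+a-c=2-v'$, $1+a-d=2$, matching the lower parameters of our sum, while the balanced quantity $1+a-b-c-d$ becomes $u+u'=(2-(A+B)/2)W$; the three pairs $1+a-b-c$, $1+a-b-d$, $1+a-c-d$ simplify to $1-(A+B)W/2$, $1+u$, $1+u'$. Dougall's identity then expresses the sum as a ratio of $\Gamma$-values whose arguments are — up to $\Gamma(x+1)=x\Gamma(x)$ — precisely those appearing in the four $\psi_{1}$'s on the right-hand side, namely $2W$, $(A+B)W/2$, $(1+A/2)W$, and $(1+B/2)W$.

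Finally, to convert the $\Gamma$-ratio to the stated $\psi_{1}$-combination, I would use the identity $\psi_{1}(W)=-\psi(1-W)-\gamma$ recalled just before the proposition together with the balance condition $2W+(A+B)W/2=(1+A/2)W+(1+B/2)W$; this balance makes the logarithm of the $\Gamma$-ratio an additive combination of $\psi$-values, and hence by the stated identity an additive combination of $\psi_{1}$'s, which combined with the rational prefactor $W^{3}(1-W)/[(1-2W)(1-v)(1-v')]$ produces the factor $1/\{2(1-A/2)(1-B/2)\}$ of the proposition. The principal obstacle is that our summand is not literally a Dougall ${}_{5}F_{4}$ — the Pochhammer $(2-2W)_m$ sits in the denominator rather than the numerator, and there is an extra factor $(1)_m/(2)_m=1/(m+1)$ — so the application of Dougall's identity requires either (i) a preliminary partial-fraction split $(n-W)/[n(n-2W)]=1/(2n)+1/(2(n-2W))$ that separates the sum into two pieces each matching Dougall's formula, or (ii) the use of a contiguous variant; careful bookkeeping of the rational prefactor and the reassembly of these pieces into the balanced $\psi_{1}$-combination is the delicate technical step.
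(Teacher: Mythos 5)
Your reduction of $G^{\frac{1}{2}}_{\wtd}$ to the reindexed series is correct, and Dougall's identity is indeed the right tool, but the way you invoke it does not work. With your choices $a=2-2W$, $b=1-u$, $c=1-u'$, $d=1-2W$, the general term of Dougall's ${}_5F_4$ is $\tfrac{(2-2W)_m(2-W)_m(1-u)_m(1-u')_m(1-2W)_m}{m!\,(1-W)_m(2-v)_m(2-v')_m(2)_m}$, whereas the term of your series is $\tfrac{(2-W)_m(1)_m(1-2W)_m(1-u)_m(1-u')_m}{(1-W)_m(2)_m(2-2W)_m(2-v)_m(2-v')_m}$; these differ by the factor $\bigl(m!/(2-2W)_m\bigr)^2$, so your series is not a specialization of Dougall's ${}_5F_4$ at all (the mismatch is larger than the single misplaced Pochhammer you describe). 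Neither proposed repair closes this gap: the partial-fraction split $\frac{n-W}{n(n-2W)}=\frac{1}{2n}+\frac{1}{2(n-2W)}$ destroys the very-well-poised structure and yields two series that are not individually summable by Dougall or any classical theorem --- for instance at $A=B=0$ the pieces are $\frac12\sum_n\frac{1}{n(n-W)^2}$ and $\frac12\sum_n\frac{1}{(n-2W)(n-W)^2}$, whose closed forms involve the trigamma function, and only their sum is linear in zeta values --- while ``a contiguous variant'' is left unspecified. There is also a conceptual error at the end: Dougall's right-hand side is a ratio of Gamma values, and such a ratio (balanced or not) is not an additive combination of $\psi$-values; taking its logarithm produces $\log\Gamma$'s, not $\psi$'s, so no amount of bookkeeping turns a direct Dougall evaluation into the $\psi_1$-linear expression of the proposition.

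The missing idea is a differentiation in a parameter, which is exactly how the paper proceeds: apply $\partial/\partial d$ to both sides of \eqref{eq:Dougall} and let $d\to0$. On the left, since $(d)_n=d(d+1)_{n-1}$ vanishes to first order at $d=0$, the derivative produces precisely the factor $\frac{2n+a}{n(n+a)}$, i.e.\ the $\frac{n-W}{n(n-2W)}$ structure of $G^{\frac{1}{2}}_{\wtd}$, with no leftover Pochhammer mismatch; on the right, the derivative of the logarithm of the Gamma-ratio is a linear combination of digamma values, the Euler constants cancelling by the balance you observed, which gives \eqref{eq:after_diff}. Substituting $a=-2W$, $b=-(1-\frac{A}{2})W$, $c=-(1-\frac{B}{2})W$ into \eqref{eq:after_diff} then yields the proposition, the prefactor coming from $(b)_n(c)_n=uu'(1-u)_{n-1}(1-u')_{n-1}$ with $u=(1-\frac{A}{2})W$, $u'=(1-\frac{B}{2})W$. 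Without such a parameter differentiation (or an equivalent device converting $\Gamma$'s into $\psi$'s), your outline cannot reach the stated $\psi_1$-identity.
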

This proposition is an immediate consequence of Dougall's identity which relates the special value of the generalized hypergeometric function $ _{5}F_{4}$ and the Gamma function $\Gamma(z)$.

\begin{defn}
For positive integer $r$, $a_1, \ldots, a_{r+1} \in \bC$, and $b_1, \ldots, b_r \in \bC \setminus\bZ_{\le 0}$, the generalized hypergeometric function is defined by
\begin{equation*}
{}_{r+1} F_{r}
\biggl(
\begin{matrix}
a_1 & \cdots & a_{r+1} \\
b_1 & \cdots & b_r
\end{matrix}
; x
\biggr)
\coloneqq
\sum_{n=0}^{\infty}\frac{(a_1)_n \cdots (a_{r+1})_n}{(b_1)_n \cdots (b_r)_n}\frac{x^n}{n!}.
\end{equation*}
This infinite series converges when $|x|<1$. Moreover, this converges at $|x|=1$ if $\Re(\sum_{i=1}^{r}b_i-\sum_{j=1}^{r+1}a_j)>0$.
\end{defn}

\begin{prop}[{Dougall's identity, \cite{D}}]
For $a,b,c,d \in \bC$ with $\Re(1+a-b-c-d)>0$, we have
\begin{align}\label{eq:Dougall}
\begin{split}
&{}_{5} F_{4}
\biggl(
\begin{matrix}
a & 1+\frac{a}{2} & b & c & d \\
\frac{a}{2} & 1+a-b & 1+a-c & 1+a-d & 
\end{matrix}
; 1
\biggr)\\
&\qquad\qquad\qquad\qquad =\frac{\Gamma(1+a-b)\Gamma(1+a-c)\Gamma(1+a-d)\Gamma(1+a-b-c-d)}
{\Gamma(1+a)\Gamma(1+a-b-c)\Gamma(1+a-b-d)\Gamma(1+a-c-d)}
\end{split}
\end{align}
\end{prop}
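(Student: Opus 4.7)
The plan is to prove the Proposition by differentiating Dougall's identity \eqref{eq:Dougall} with respect to the parameter $d$ and evaluating at $d=0$, then specializing the remaining parameters $(a,b,c)$ to functions of $W, A, B$.

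First, I observe that at $d=0$ both sides of Dougall's identity reduce to $1$: on the left only the $n=0$ term of the ${}_5F_4$ survives because $(d)_n = 0$ for every $n \ge 1$, and on the right the four Gamma factors cancel pairwise. Differentiating both sides in $d$ and evaluating at $d=0$ therefore produces a non-trivial identity. On the left, using $\partial_d (d)_n|_{d=0} = (n-1)!$ for $n\ge 1$ together with the well-poised telescopings $(1+a/2)_n/(a/2)_n = (a+2n)/a$ and $(a)_n/(1+a)_n = a/(a+n)$, the derivative collapses to
\begin{equation*}
\sum_{n=1}^{\infty}\frac{a+2n}{n(a+n)}\cdot\frac{(b)_n(c)_n}{(1+a-b)_n(1+a-c)_n}.
\end{equation*}
On the right, logarithmic differentiation of the Gamma product yields
\begin{equation*}
-\psi(1+a)-\psi(1+a-b-c)+\psi(1+a-b)+\psi(1+a-c).
\end{equation*}

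Next I would specialize $a=-2W$, $b=-(1-\tfrac{A}{2})W$, $c=-(1-\tfrac{B}{2})W$. With these choices, $\tfrac{a+2n}{n(a+n)} = \tfrac{2(n-W)}{n(n-2W)}$; the denominator Pochhammers match the lemma's since $1+a-b = 1-(1+\tfrac{A}{2})W$ and $1+a-c = 1-(1+\tfrac{B}{2})W$; and the new argument that appears is $1+a-b-c = 1-\tfrac{A+B}{2}W$. For the numerator Pochhammers, splitting $(X)_n = X(X+1)_{n-1}$ gives $(b)_n = -(1-\tfrac{A}{2})W\cdot(1-(1-\tfrac{A}{2})W)_{n-1}$ and similarly for $(c)_n$. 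Pulling out the resulting scalar $(1-\tfrac{A}{2})(1-\tfrac{B}{2})W^2$ converts the left-hand sum into exactly the hypergeometric-like expression for $G^{1/2}_{\wtd}(A,B,W)/W^3$ supplied by the preceding lemma, up to the factor $2(1-\tfrac{A}{2})(1-\tfrac{B}{2})W^2$.

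Finally, I would rewrite the four digamma values on the right using the standard series expansion $-\psi(1-X)-\gamma = \sum_{k\ge 1}\zeta(k+1)X^k$, so that each $\psi$-value is repackaged in terms of $\psi_1$ evaluated at one of $2W$, $\tfrac{A+B}{2}W$, $(1+\tfrac{A}{2})W$, $(1+\tfrac{B}{2})W$. The four Euler--Mascheroni constants cancel in pairs ($+\gamma+\gamma-\gamma-\gamma = 0$), and after multiplying by $W^3$ and dividing by the pulled-out scalar, the prefactor reduces to the claimed $1/[2(1-\tfrac{A}{2})(1-\tfrac{B}{2})]$, giving the stated identity.

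The main obstacle will be the Pochhammer bookkeeping in the specialization step: one must carefully verify that $(b)_n(c)_n/[(1+a-b)_n(1+a-c)_n]$ reproduces the summand of the lemma with precisely the scalar $(1-\tfrac{A}{2})(1-\tfrac{B}{2})W^2$ factored out, since it is this cancellation with the overall $W^3$ that yields the clean $1/[2(1-\tfrac{A}{2})(1-\tfrac{B}{2})]$ coefficient on the right-hand side and ensures the arguments of the four $\psi_1$ values come out symmetrically in $A$ and $B$.
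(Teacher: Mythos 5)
What you have written is not a proof of the stated Proposition at all: the statement to be proved \emph{is} Dougall's ${}_5F_4$ summation \eqref{eq:Dougall}, and your argument takes \eqref{eq:Dougall} as its starting point ("differentiating Dougall's identity \eqref{eq:Dougall} with respect to $d$ and evaluating at $d=0$"). That is circular with respect to the stated goal. What you have actually produced is a derivation of the \emph{consequence} of Dougall's identity, namely Proposition \ref{prop:F^1/2_Dougall} (the evaluation of $G^{\frac{1}{2}}_{\wtd}(A,B,W)$ in terms of $\psi_1$), and for that proposition your computation does match the paper's argument: the derivative of the summand in $d$ at $d=0$ collapses via $\partial_d(d)_n|_{d=0}=(n-1)!$, $(1+\frac{a}{2})_n/(\frac{a}{2})_n=(a+2n)/a$ and $(a)_n/(1+a)_n=a/(a+n)$ to $\sum_{n\ge1}\frac{2n+a}{n(n+a)}\frac{(b)_n(c)_n}{(1+a-b)_n(1+a-c)_n}$, the logarithmic derivative of the Gamma quotient gives the four digamma values, and the specialization $a=-2W$, $b=-(1-\frac{A}{2})W$, $c=-(1-\frac{B}{2})W$ is exactly the one used in the paper. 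But none of this establishes \eqref{eq:Dougall} itself.

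The paper does not prove Dougall's identity either; it is quoted from \cite{D}. If you intend to actually prove the Proposition rather than cite it, you need an independent argument: for instance, deduce it from the terminating very-well-poised ${}_7F_6$ summation (Dougall's theorem) by letting one numerator parameter tend to infinity, or prove the terminating case $d=-N$ by induction using contiguous relations (equivalently the Chu--Vandermonde/Pfaff--Saalsch\"utz machinery or a WZ-pair) and then remove the terminating hypothesis by analytic continuation in $d$ under the convergence condition $\Re(1+a-b-c-d)>0$, e.g.\ via Carlson's theorem. As it stands, your proposal assumes the statement it is supposed to prove.
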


\begin{proof}[{Proof of Proposition \ref{prop:F^1/2_Dougall}}]
By differentiating the both-hand sides of \eqref{eq:Dougall} with $d$ and taking the limit $d \to 0$, we have and equality in $\bR\bbra{a,b,c}$ : 
\begin{equation}\label{eq:after_diff}
\begin{split}
&\sum_{n=1}^{\infty}\frac{2n+a}{n(n+a)}\frac{(b)_n(c)_n}{(1+a-b)_n(1+a-c)_n}\\
&=-\psi_1(b-a)-\psi_1(c-a)+\psi_1(-a)+\psi_1(b+c-a).
\end{split}
\end{equation}
By setting $a=-2W, b=-(1-\frac{A}{2})W, c=-(1-\frac{B}{2})W$ in \eqref{eq:after_diff}, we obtain the desired formula.
\end{proof}

\begin{proof}[{Proof of Theorem \ref{thm:wtd SF1}}]
Note that
\begin{equation*}
G^{\frac{1}{2}}_{\wtd}(A, 0, W)
=\sum_{1\le r<k}\Biggl\{\sum_{\bk \in I_r(k,r)}(2^{k_r-2}-1)\zeta^{\frac{1}{2}}(\bk)\Biggr\}A^{r-1}W^k.
\end{equation*}
From Proposition \ref{prop:F^1/2_Dougall}, for $k \in \bZ_{\ge2}$, the coefficient of $G^{\frac{1}{2}}_{\wtd}(A, 0, W)$ at $W^k$ is
\begin{equation*}
\frac{1}{2-A}\biggl\{2^{k-1}+\Bigl(\frac{A}{2}\Bigr)^{k-1}-\Bigl(1+\frac{A}{2}\Bigr)^{k-1}-1\biggr\}\zeta(k).
\end{equation*}
Thus, for a positive integer $r<k$, the coefficient of $G^{\frac{1}{2}}_{\wtd}(A, 0, W)$ at $A^{r-1}$ is
\begin{equation}\label{eq:coeff_Ar-1}
\frac{1}{2^{r}}\biggl\{2^{k-1}-1-\sum_{n=0}^{r-1}\binom{k-1}{n}\biggr\}\zeta(k).
\end{equation}
Therefore, Theorem \ref{thm:wtd SF1} is obtained by the equation \eqref{eq:coeff_Ar-1} and the sum formula for $1/2$-MZVs \cite[Theorem 1.1]{Y}.
\end{proof}

\begin{proof}[{Proof of Theorem \ref{thm:wtd SF2} for $\cF=\cS$}]
It is also easy to see that
\begin{equation*}
F^{\frac{1}{2}}_{\wtd, \cS}(A, B, W)
=\sum_{1\le i \le r<k}\Biggl\{\sum_{\bk \in I_i(k,r)}(2^{k_i-2}-1)\zeta^{\frac{1}{2}}_{\cS}(\bk)\Biggr\}A^{i-1}B^{r-i}W^k,
\end{equation*}
From this equation and Theorem \ref{thm:wtd SF1}, the left-hand side of \eqref{eq:wtdSF_F_i} coincides with
\begin{align*}
&\Bigl(\text{the coefficient of $G^{\frac{1}{2}}_{\wtd}(A, -B, W)+G^{\frac{1}{2}}_{\wtd}(B, -A, -W)$ at $A^{i-1}B^{r-i}W^k$}\Bigr)\\
&=(-1)^{r-i}\Bigl(\text{the coefficient of $G^{\frac{1}{2}}_{\wtd}(A, B, W)$ at $A^{i-1}B^{r-i}W^k$}\Bigr)\\
&\quad+(-1)^{i-1}\Bigl(\text{the coefficient of $G^{\frac{1}{2}}_{\wtd}(A, B, W)$ at $A^{r-i}B^{i-1}W^k$}\Bigr)\\
&=\frac{(-1)^i((-1)^r-(-1)^k)}{2^r}\Biggl\{2^{k-1}-\sum_{u=0}^{i-1}\binom{k-1}{u}-\sum_{v=0}^{r-i}\binom{k-1}{v}\Biggr\}\zeta(k),
\end{align*}
which completes the proof of \eqref{eq:wtdSF_F_i}. In the case $i=r$, \eqref{eq:wtdSF_F_r} is proved immediately from \eqref{eq:wtdSF_F_i} and the sum formula for $1/2$-SMZV \cite[Proposition 5.20]{S}.
\end{proof}

\begin{rem}
We define $F^{t}(A, B, W) \in \cZ[t, A, B]\bbra{W}$ and $F^{t}_{\cS}(A,B,W) \in \overline{\cZ}[t, A, B]\bbra{W}$ by
\begin{equation*}
G^t(A, B, W)
\coloneqq \sum_{\substack{\bk, \bl \in \cI \\ a \ge2}}
\zeta^{t}\biggl(\begin{matrix} \bk \\ \overleftarrow{\bl} \end{matrix}; a\biggr)
A^{\dep(\bk)}B^{\dep(\bl)}W^{\wt(\bk)+a+\wt(\bl)}
\end{equation*}
and 
\begin{equation*}
F^{t}_{\cS}(A, B, W)
\coloneqq \sum_{\substack{\bk, \bl \in \cI \\ a \ge2}}
\zeta^{t}_{\cS}(\bk, a, \bl)
A^{\dep(\bk)}B^{\dep(\bl)}W^{\wt(\bk)+a+\wt(\bl)}
\end{equation*}
Then, from a similar argument to Lemma \ref{lem:decomp}, we have a congruence 
\begin{equation}\label{eq:decomp2}
F^{t}_{\cS}(A,B,W)
\equiv G^{t}(A, -B, W)+G^{t}(B, -A, -W)
\end{equation}
in $\overline{\cZ}[t, A, B]\bbra{W}$, and an equality
\begin{equation}\label{eq:gen_G}
G^{t}(A, B, W)
=\sum_{n=1}^{\infty}
\frac{W^2(n-W)}{n}
\frac{(1-(1-(1-t)A)W)_{n-1}}{(1-(1-(1+t)B)W)_n}
\frac{(1-(1-tB)W)_{n-1}}{(1-(1+tA)W)_n}
\end{equation}
in $\bR[t, A, B]\bbra{W}$. Moreover, by differentiating with respect to $b$ and taking the limit $b \to 0$ in Gauss' hypergeometric theorem
\begin{equation*}
_{2}F_1
\biggl(
\begin{matrix}
a & b \\
c & 
\end{matrix}
; 1
\biggr)=\frac{\Gamma(c)\Gamma(c-a-b)}{\Gamma(c-a)\Gamma(c-b)},
\end{equation*}
we have
\begin{equation}\label{eq:Gauss_to_psi}
\sum_{n=1}^{\infty}\frac{1}{n(n+a)}\frac{(1+b)_n}{(c)_n}
=\frac{1}{a}\bigl\{\psi_1(c-a)-\psi_1(c)\bigr\}.
\end{equation}
Therefore, from \eqref{eq:decomp2}, \eqref{eq:gen_G} and \eqref{eq:Gauss_to_psi}, we can prove the sum formulas for the $t$-MZVs first proved by Yamamoto \cite[Theorem 1.1]{Y}, and the $i$-admissible sum formula (the formula expressing the sum over $I_{i}(k,r)$) for the SMZVs proved by Murahara \cite[Theorem 1.2]{M1}, and $r$-admissible sum formula for the $t$-SMZVs proved by Seki \cite[Proposition 5.20]{S}. Moreover, we can give another proof of the sum formula for the anti-hook type Schur multiple zeta values proved first by Bachmann--Kadota--Suzuki--Yamamoto--Yamasaki in \cite[Theorem 3.8]{BKSYY}.
\end{rem}

%%%%%%%%%%%%%%%%%%%%%%%%%%%%%%%%%%%%%%%%%%%%%%%%%%%%%%%%
%Section4%%%%%%%%%%%%%%%%%%%%%%%%%%%%%%%%%%%%%%%%%%%%%%%%%%%
%%%%%%%%%%%%%%%%%%%%%%%%%%%%%%%%%%%%%%%%%%%%%%%%%%%%%%%%
\section{Proof of Theorem \ref{thm:wtd SF2} for $\cF=\cA$}

In this section, we prove Theorem \ref{thm:wtd SF2} for $\cF=\cA$. First, similar to the case $\cF=\cS$, we consider the generating function. For a positive integer $N$ and indeterminates $A, B, W$, set
\begin{equation*}
G^{\frac{1}{2}}_{\wtd, \le N}(A, B, W)
\coloneqq
\sum_{\substack{\bk, \bl \in \cI \\ a \ge2}}(2^{a-2}-1)\zeta^{\frac{1}{2}}_{\le N}
\biggl(
\begin{matrix}
\bk \\
\bl
\end{matrix}
;a
\biggr)
A^{\dep(\bk)}B^{\dep(\bl)}W^{\wt(\bk)+a+\wt(\bl)}
\end{equation*}
and 
\begin{equation*}
F^{\frac{1}{2}}_{\wtd, \le N}(A, B, W)
\coloneqq
\sum_{\substack{\bk, \bl \in \cI \\ a \ge2}}(2^{a-2}-1)\zeta^{\frac{1}{2}}_{\le N}(\bk, a, \bl)
A^{\dep(\bk)}B^{\dep(\bl)}W^{\wt(\bk)+a+\wt(\bl)}.
\end{equation*}
\begin{lem}\label{lem:GF_fin}
In $\bQ[A, B]\bbra{W}$, we have
\begin{equation*}
F^{\frac{1}{2}}_{\wtd, \le N}(A, B, W)
=G^{\frac{1}{2}}_{\wtd, \le N}(A, -B, W)\frac{(1-(1-\frac{B}{2})W)_N}{(1-(1+\frac{B}{2})W)_N}.
\end{equation*}
\end{lem}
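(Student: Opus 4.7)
The plan is to mirror the proof of Lemma \ref{lem:decomp}, but to replace its symmetric two-sided decomposition (which was needed because $\zeta^{\frac{1}{2}}_{\cS}$ is an intrinsically symmetrised object) by the single-sided identity that already lives at the harmonic-sum level. Concretely, I would start from the second lemma of Section 2, which provides, in $\frH^{1}_{\ast}$, the identity
\begin{equation*}
S^{\frac{1}{2}}(e_{\bk,a,\bl})=\sum_{j=0}^{\dep(\bl)}(-1)^{j}K^{\frac{1}{2}}\biggl(\begin{matrix}\bk\\ \overleftarrow{\bl_{[j]}}\end{matrix};a\biggr)\ast S^{\frac{1}{2}}(e_{\bl^{[j]}}).
\end{equation*}
Since the ordinary multiple harmonic sum $\zeta_{\le N}$ satisfies the stuffle product (as invoked in the proof of Lemma \ref{lem:gen_func_fmh}), the map $Z_{\le N}\colon\frH^{1}_{\ast}\to\bQ$ is a $\bQ$-algebra homomorphism, and applying it would yield
\begin{equation*}
\zeta^{\frac{1}{2}}_{\le N}(\bk,a,\bl)=\sum_{j=0}^{\dep(\bl)}(-1)^{j}\zeta^{\frac{1}{2}}_{\le N}\biggl(\begin{matrix}\bk\\ \overleftarrow{\bl_{[j]}}\end{matrix};a\biggr)\zeta^{\frac{1}{2}}_{\le N}(\bl^{[j]}).
\end{equation*}

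Next I would insert this formula into the definition of $F^{\frac{1}{2}}_{\wtd,\le N}(A,B,W)$ and reparametrise the double sum over $\bl$ and $j$ by setting $\bl_{1}\coloneqq\bl_{[j]}$ and $\bl_{2}\coloneqq\bl^{[j]}$, which then range independently over $\cI$ (with the empty index allowed). Because $\dep(\bl)=\dep(\bl_{1})+\dep(\bl_{2})$, $\wt(\bl)=\wt(\bl_{1})+\wt(\bl_{2})$, and $(-1)^{j}=(-1)^{\dep(\bl_{1})}$, the generating function cleanly factorises as a product of two independent sums: a sum over $(\bk,\bl_{1},a)$ carrying the hook value, times a pure sum over $\bl_{2}$ of ordinary finite interpolated harmonic sums.

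I would then evaluate the two factors separately. For the first, the substitution $\bl\coloneqq\overleftarrow{\bl_{1}}$, being a depth- and weight-preserving bijection on $\cI$, together with the absorption of the sign $(-1)^{\dep(\bl_{1})}$ into $B$, identifies it with $G^{\frac{1}{2}}_{\wtd,\le N}(A,-B,W)$. For the second, Lemma \ref{lem:gen_func_fmh} specialised to $t=\frac{1}{2}$ (with $A$ renamed to $B$) evaluates $\sum_{\bl_{2}\in\cI}\zeta^{\frac{1}{2}}_{\le N}(\bl_{2})B^{\dep(\bl_{2})}W^{\wt(\bl_{2})}$ to the Pochhammer ratio $(1-(1-\frac{B}{2})W)_{N}/(1-(1+\frac{B}{2})W)_{N}$. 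Multiplying the two factors yields the claimed identity.

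The only step that requires genuine care, and thus the main (quite mild) obstacle, is the combinatorial bookkeeping of the splitting $\bl=\bl_{[j]}\,\bl^{[j]}$ paired with $\sum_{j}$: one has to verify that this really matches an independent double sum over $(\bl_{1},\bl_{2})\in\cI\times\cI$, and that the reversal $\overleftarrow{\,\cdot\,}$ on $\bl_{1}$ interacts trivially with the rest of the summand. Beyond this there is no analytic content in the argument; unlike Lemma \ref{lem:decomp}, no appeal to $\Gamma_{1}$ or to the reflection formula \eqref{eq:rec_gamma} is needed here, because $\zeta^{\frac{1}{2}}_{\le N}$ itself already obeys the harmonic relation and the generating function is a formal power series.
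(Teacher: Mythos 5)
Your argument is correct and is exactly the paper's intended route: the paper's proof is the one-line remark ``same method as Lemma \ref{lem:decomp}'', and in the truncated setting that method is precisely your one-sided use of the identity $S^{\frac12}(e_{\bk,a,\bl})=\sum_{j}(-1)^{j}K^{\frac12}\bigl(\begin{smallmatrix}\bk\\ \overleftarrow{\bl_{[j]}}\end{smallmatrix};a\bigr)\ast S^{\frac12}(e_{\bl^{[j]}})$, the homomorphism property of $Z_{\le N}$ for $\ast$, and Lemma \ref{lem:gen_func_fmh} at $t=\tfrac12$ to produce the Pochhammer ratio, with no need for $\Gamma_1$ or the reflection formula. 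The reindexing of $(\bl,j)$ as independent pairs $(\bl_{[j]},\bl^{[j]})$ and the reversal bijection are handled correctly, so nothing is missing.
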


\begin{proof}
This lemma is proved by the same method for the proof of Lemma \ref{lem:decomp}.
\end{proof}

For a positive integer $n$, recall $\frZ_p(n) = \frac{B_{p-n}}{n}$ and $\frZ_{\cA}(n)=(\frZ_p(n) \bmod{p})_{p}$. 

\begin{lem}\label{lem:ZC}
For an indeterminate $a$ and a sufficiently large prime $p$, in $(\bZ/p^2\bZ)\bbra{a}$, we have
\begin{align}
\frac{(1+a)_{p-1}}{(p-1)!}
&=1-p\sum_{n=1}^{\infty}\frZ_p(n+1)a^n, \label{eq:ZC1} \\
\frac{(p-1)!}{(1+a)_{p-1}}
&=1+p\sum_{n=1}^{\infty}\frZ_p(n+1)a^n. \label{eq:ZC2}
\end{align}
In particular, we have
\begin{equation*}
\frac{(1+a)_{p-1}}{(p-1)!} \equiv \frac{(p-1)!}{(1+a)_{p-1}} \equiv 1 \pmod{p}.
\end{equation*}
\end{lem}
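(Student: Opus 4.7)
The plan is to compute the formal logarithm $L(a)\coloneqq\log\bigl((1+a)_{p-1}/(p-1)!\bigr)$ modulo $p^{2}$ and then exponentiate. Writing $(1+a)_{p-1}/(p-1)!=\prod_{k=1}^{p-1}(1+a/k)$, one gets
\[
L(a)=\sum_{n\ge 1}\frac{(-1)^{n-1}}{n}H_{n}\,a^{n},\qquad H_{n}\coloneqq\sum_{k=1}^{p-1}k^{-n}.
\]
The classical fact that $\sum_{k\in\bF_{p}^{\times}}k^{m}\equiv 0\pmod{p}$ for $1\le m\le p-2$ immediately gives $H_{n}\equiv 0\pmod{p}$ on that range, so every coefficient of $L(a)$ is a multiple of $p$; consequently $\exp L(a)\equiv 1+L(a)\pmod{p^{2}}$ as formal power series (all higher terms of the exponential series are annihilated by $p^{2}$).

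The main step is the Glaisher-type congruence
\[
H_{n}\equiv\frac{(-1)^{n}\,n\,p}{n+1}\,B_{p-n-1}\pmod{p^{2}}\qquad(1\le n\le p-2),
\]
with Bernoulli numbers in the paper's convention ($B_{1}=1/2$). I would derive it by inserting $k^{-n}\equiv k^{p-1-n}(1-pq_{k})\pmod{p^{2}}$ for the Fermat quotient $q_{k}=(k^{p-1}-1)/p$, using Faulhaber's sum $\sum_{k=1}^{p-1}k^{m}\equiv pB_{m}\pmod{p^{2}}$ on the leading term, and controlling the correction $\sum k^{p-1-n}q_{k}\pmod{p}$ through the identity $p\sum k^{p-1-n}q_{k}=\sum k^{2p-2-n}-\sum k^{p-1-n}$ combined with Kummer's congruence $B_{m+(p-1)}/(m+p-1)\equiv B_{m}/m\pmod{p}$. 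Substituting this into the formula for $L(a)$ gives the coefficient of $a^{n}$ in $(1+a)_{p-1}/(p-1)!$ as
\[
\frac{(-1)^{n-1}}{n}\cdot\frac{(-1)^{n}n p}{n+1}B_{p-n-1}=-\frac{pB_{p-n-1}}{n+1}=-p\,\frZ_{p}(n+1),
\]
which is \eqref{eq:ZC1}. The companion \eqref{eq:ZC2} then follows automatically: setting $X\coloneqq\sum_{n\ge 1}\frZ_{p}(n+1)a^{n}$, the product $(1-pX)(1+pX)=1-p^{2}X^{2}\equiv 1\pmod{p^{2}}$ identifies $1+pX$ as the inverse of $1-pX$ modulo $p^{2}$. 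The concluding mod-$p$ statements then follow by reducing both \eqref{eq:ZC1} and \eqref{eq:ZC2} modulo $p$.

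The principal obstacle will be the Glaisher-type congruence above: although entirely classical, its derivation weaves together Fermat quotients, Faulhaber's power sum, and Kummer's congruence, and one must handle the Bernoulli sign convention carefully to pin down the $(-1)^{n}$ factor (equivalently, to reconcile $B_{1}=+1/2$ here with the $B_{1}=-1/2$ convention that appears naturally in Faulhaber). Everything else — the reduction to $H_{n}$ via the logarithm, and the symmetric passage from \eqref{eq:ZC1} to \eqref{eq:ZC2} — is essentially bookkeeping modulo $p^{2}$.
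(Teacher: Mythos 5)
Your argument is correct in substance, but it takes a genuinely different route from the paper's. The paper expands $(1+a)_{p-1}/(p-1)!=\prod_{m=1}^{p-1}(1+a/m)$ directly into the multiple harmonic sums $\zeta^{}_{\le p-1}(\{1\}^{n})$ (the elementary symmetric functions of $1,\tfrac12,\dots,\tfrac1{p-1}$) and imports the congruence $\zeta^{}_{\le p-1}(\{1\}^{n})\equiv -p\,\frZ_p(n+1)\pmod{p^2}$ from Zhou--Cai as a black box; for \eqref{eq:ZC2} it expands the reciprocal into the star sums $\zeta^{\star}_{\le p-1}(\{1\}^{n})$ and appeals to an antipode-like relation of Ihara--Kajikawa--Ohno--Okuda. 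You instead pass to the logarithm, reduce everything to the power sums $H_n=\sum_{k=1}^{p-1}k^{-n}$, and prove the required congruence $H_n\equiv\frac{n}{n+1}\,p\,B_{p-n-1}\pmod{p^2}$ yourself via Fermat quotients, Faulhaber's sum and Kummer's congruence; this is precisely the classical Glaisher-type congruence from which results like Zhou--Cai's are usually derived, so you are re-proving the paper's imported ingredient rather than citing it. (Your extra factor $(-1)^{n}$ is redundant: in the relevant range $B_{p-n-1}=0$ for odd $n$, and only even indices $p-n-1$ occur otherwise, so the $B_1$ convention you worry about never enters for $p$ large.) Your derivation of \eqref{eq:ZC2} from \eqref{eq:ZC1} via $(1-pX)(1+pX)=1-p^2X^2\equiv1\pmod{p^2}$ is in fact cleaner than the paper's detour through $\zeta^{\star}$ and the antipode relation. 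What the paper's route buys is brevity (two citations replace your computation); what yours buys is self-containedness and a shorter treatment of the reciprocal.

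One caveat: your blanket claims that every coefficient of $L(a)$ is divisible by $p$ and that $\exp L\equiv 1+L\pmod{p^2}$ ``as formal power series'' are not literally true in all degrees --- the vanishing $H_m\equiv0\pmod p$ only holds for $1\le m\le p-2$ (indeed $H_{p-1}\equiv-1\pmod p$), and the coefficient $H_n/n$ is not even $p$-integral a priori when $p\mid n$. This is harmless because the lemma itself (whose right-hand side involves $\frZ_p(n+1)=B_{p-n-1}/(n+1)$) must be read coefficientwise for each fixed degree $n$ with $p$ sufficiently large, which is also how it is applied in the paper; since $L$ has no constant term, only the terms $L^m/m!$ with $m\le n$ contribute in degree $n$, and there your estimate is valid. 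You should state this restriction explicitly rather than asserting the congruence of full power series.
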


\begin{proof}
For a symbol $X$ and a non-negative integer $n$, set $\{X\}^{n} \coloneqq \underbrace{X, \ldots, X}_{n}$. We have
\begin{equation*}
\frac{(1+a)_{p-1}}{(p-1)!}
=\prod_{n=1}^{p-1}\Bigl(1+\frac{a}{m}\Bigr)
=1+\sum_{n=1}^{p-1}\zeta^{}_{\le p-1}(\{1\}^n)a^n.
\end{equation*}
From the result of Zhou--Cai \cite[p.1332]{ZC} and $\frZ_p(n)=0$ for even $n$, we have
\begin{equation}\label{eq:1n}
\zeta^{}_{\le p-1}(\{1\}^n)\equiv p(-1)^{n+1}\frZ_p(n+1)= -p\frZ_p(n+1) \pmod{p^2},
\end{equation}
which gives \eqref{eq:ZC1}. Moreover, we have
\begin{align*}
\frac{(p-1)!}{(1+a)_{p-1}}
=\prod_{n=1}^{p-1}\Bigl(1+\frac{a}{n}\Bigr)^{-1}
=\prod_{n=1}^{p-1}\Bigl(1-\frac{a}{n}+\frac{a^2}{n^2}-\cdots\Bigr)
=1+\sum_{n=1}^{p-1}(-1)^{n}\zeta^{\star}_{\le p-1}(\{1\}^n)a^n,
\end{align*}
where
\begin{equation*}
\zeta^{\star}_{\le p-1}(k_1, \ldots, k_r)
\coloneqq
\sum_{1\le m_1\le \cdots \le m_n\le p-1}\frac{1}{m^{k_1}_1\cdots m^{k_r}_r}
\end{equation*}
for positive integers $k_1, \ldots, k_r$. From \eqref{eq:1n} and and the antipode-like relation for $\zeta^{}_{\le p-1}(\bk)$ and $\zeta^{\star}_{\le p-1}(\bk)$ \cite[Proposition 6]{IKOO}, we have
\begin{equation}\label{eq:1nstar}
\zeta^{\star}_{\le p-1}(\{1\}^n)
=(-1)^{n}\zeta^{}_{\le p-1}(\{1\}^n)
\equiv p(-1)^{n+1}\frZ_p(n+1)= -p\frZ_p(n+1) \pmod{p^2}.
\end{equation}
This completes the proof of \eqref{eq:ZC2}.
\end{proof}

From Lemmas \ref{lem:GF_fin} and \ref{lem:ZC}, for any large prime, we have
\begin{equation}\label{eq:F_to_G}
F^{\frac{1}{2}}_{\wtd, \le p-1}(A, B, W)
\equiv G^{\frac{1}{2}}_{\wtd, \le p-1}(A, -B, W) \pmod{p}.
\end{equation}
Next, we define an element $\psi_{\cA}(x) \in \cA\bbra{x}$ by
\begin{equation*}
\psi_{\cA}(x) \coloneqq \sum_{n=2}^{\infty}\frZ_{\cA}(n)x^n.
\end{equation*}
This $\psi_{\cA}(x)$ is regarded as an $\cA$-analogue of $\psi_1(x)=\sum_{k=2}^{\infty}\zeta(k)x^{k-2}$. We evaluate $G^{\frac{1}{2}}_{\wtd, \le p-1}(A,B,W)$ with respect to $\psi_{\cA}(x)$.

\begin{lem}\label{lem:Dougall_A}
Let $a,b,c$ be indeterminates and $p$ a prime. In $\cA\bbra{a,b,c}$, we have
\begin{align}\label{eq:Dougall_A}
\begin{split}
&\Biggl(\sum_{n=1}^{p-1}\frac{2n+a}{n(n+a)}\frac{(b)_n(c)_n}{(1+a-b)_n(1+a-c)_n}\bmod{p}\Biggr)_p\\
&=\frac{1}{(a-b)(a-c)}\bigl\{(a-b-c)\psi_{\cA}(a)+a\psi_{\cA}(a-b-c)+b\psi_{\cA}(c)+c\psi_{\cA}(b)\\
&\qquad\qquad\qquad\qquad\qquad\qquad\qquad\qquad-(c-a)\psi_{\cA}(b-a)-(b-a)\psi_{\cA}(c-a)\bigr\}.
\end{split}
\end{align}
\end{lem}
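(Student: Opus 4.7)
The plan is to apply Dougall's identity \eqref{eq:Dougall} with the specialization $d = 1-p$, so that $(1-p)_n = 0$ for $n \geq p$ truncates the hypergeometric sum at $n = p - 1$; using $\Gamma(x+p)/\Gamma(x) = (x)_p = x(1+x)_{p-1}$, the right-hand side simplifies to
\[
R_p := \frac{(1+a)_{p-1}(1+a-b-c)_{p-1}}{(1+a-b)_{p-1}(1+a-c)_{p-1}},
\]
and Lemma \ref{lem:ZC} expands each factor modulo $p^2$ via $(1+x)_{p-1}/(p-1)! \equiv 1 - p\,\psi_p(x)/x \pmod{p^2}$ to give
\[
R_p \equiv 1 + p\Big\{\tfrac{\psi_p(a-b)}{a-b} + \tfrac{\psi_p(a-c)}{a-c} - \tfrac{\psi_p(a)}{a} - \tfrac{\psi_p(a-b-c)}{a-b-c}\Big\} \pmod{p^2}.
\]

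To produce the target sum on the left, I would mimic the classical derivation of \eqref{eq:after_diff}: the derivative at $d = 0$ of the truncated Dougall
\[
F_p(d) := \sum_{n=0}^{p-1}\frac{(a)_n(1+\tfrac{a}{2})_n(b)_n(c)_n(d)_n}{(\tfrac{a}{2})_n(1+a-b)_n(1+a-c)_n(1+a-d)_n\, n!}
\]
equals exactly $\sum_{n=1}^{p-1}\tfrac{2n+a}{n(n+a)}\tfrac{(b)_n(c)_n}{(1+a-b)_n(1+a-c)_n}$. Writing $F_p(d) = R(d) - \sum_{n \geq p}(\text{term})$ with $R(d)$ the full Dougall closed form, differentiation at $d = 0$ splits $F_p'(0) = R'(0) - \mathrm{Tail}'(0)$. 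The first piece $R'(0)$ is precisely the classical four-term expression from \eqref{eq:after_diff}, which after applying the reflection $\psi_{\cA}(-x) = -\psi_{\cA}(x)$ (valid since $\frZ_{\cA}(n) = 0$ for even $n$) supplies the $\psi_{\cA}(a)$, $\psi_{\cA}(a-b-c)$, $\psi_{\cA}(b-a)$, and $\psi_{\cA}(c-a)$ portions of the claim. For the tail derivative, the substitution $n = p+m$ together with the $\cA\bbra{a,b,c}$-congruences $(x)_p \equiv -x$ for $x \in \{b,\, c,\, a-b,\, a-c\}$ (coming from $(x)_p \equiv x^p - x \pmod{p}$ in $\bF_p[x]$ and the fact that $(x^p \bmod p)_p \equiv 0$ in $\cA\bbra{x}$) yields $T_{p+m} \equiv \tfrac{bc}{(a-b)(a-c)}\,T_m$ in $\cA\bbra{a,b,c}$, reducing the tail modulo $p$ to a scaled copy of the sum, up to a boundary term at $m = 0$.

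The main obstacle is the $m = 0$ (that is, $n = p$) boundary term, where the pole $1/p$ in $\tfrac{a+2p}{p(p+a)}$ must be cancelled against the $p$-divisible part of $T_p$ provided by a careful mod-$p^2$ expansion of the Pochhammer symbols via Lemma \ref{lem:ZC}; the Wilson-quotient contributions arising in this cancellation must balance out, and the surviving residue produces precisely the extra $b\,\psi_{\cA}(c) + c\,\psi_{\cA}(b)$ numerator terms appearing in the lemma's right-hand side that have no counterpart in the classical formula.
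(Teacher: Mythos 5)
Your first instinct---terminating Dougall's identity by a $p$-dependent specialization of $d$---is the right one, but $d=1-p$ is the wrong choice: with $d=1-p$ the $n$-th summand carries $\frac{(a)_n(1-p)_n}{(a+p)_n\,n!}$, which modulo $p$ collapses to $1$, so the terminating identity computes $\sum_{n=0}^{p-1}\frac{a+2n}{a}\frac{(b)_n(c)_n}{(1+a-b)_n(1+a-c)_n}$ rather than the sum $\sum_{n=1}^{p-1}\frac{2n+a}{n(n+a)}\frac{(b)_n(c)_n}{(1+a-b)_n(1+a-c)_n}$ in the lemma, and no explicit factor of $p$ is extracted. The paper instead puts $d=-p$: then $(-p)_n=-p\,(1-p)_{n-1}$ supplies the factor $p$, and modulo $p^{2}$ the $n$-th term ($1\le n\le p-1$) becomes exactly $-p\,\frac{2n+a}{n(n+a)}\frac{(b)_n(c)_n}{(1+a-b)_n(1+a-c)_n}$; in other words the $p$-adically small parameter $d=-p$ plays the role of your ``differentiation at $d=0$'' while every quantity stays a rational function, so the whole identity lives in $\bZ_{(p)}\bbra{a,b,c}$ and can legitimately be reduced modulo $p^{2}$. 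The closed form and the separated $n=p$ term are then expanded by Lemma \ref{lem:ZC}, and it is precisely that $n=p$ term which yields $b\psi_{\cA}(c)+c\psi_{\cA}(b)$.

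The fallback you actually propose---writing the truncated sum as $R'(0)-\mathrm{Tail}'(0)$ from the archimedean identity \eqref{eq:after_diff} and reducing the two pieces modulo $p$---does not make sense. The coefficients of $R'(0)$ and of $\mathrm{Tail}'(0)$, as power series in $a,b,c$, are real numbers (combinations of zeta values); only their difference is rational, so neither piece admits a reduction modulo $p$, and replacing $\psi_1$ by $\psi_{\cA}$ has no formal justification. The tail cannot be handled termwise either: for $n\ge p$ the denominators $(1+a-b)_n$, $(1+a-c)_n$ have constant term $n!$ divisible by $p$, and an infinite sum of rationals has no termwise reduction modulo $p$ because there is no $p$-adic convergence; hence the self-similarity $T_{p+m}\equiv\frac{bc}{(a-b)(a-c)}T_m$, even if interpreted coefficientwise in $\cA\bbra{a,b,c}$, cannot be summed over $m$ to say anything about the tail. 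Finally, the decisive computation---the boundary term at $n=p$ that must produce $b\psi_{\cA}(c)+c\psi_{\cA}(b)$---is only asserted (``must balance out''), not carried out. Repairing the argument amounts to dropping the archimedean detour and running the terminating identity with $d=-p$, which is exactly the paper's proof.
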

\begin{proof}
Let $p$ be a sufficiently large prime. In \eqref{eq:Dougall_A}, set $d=-p$. Since $(-p)_n=0$ for $n\ge p+1$, we have an equality in $\bZ_{(p)}\bbra{a,b,c}$.
\begin{equation}\label{eq:key_A}
\begin{split}
&p\sum_{n=1}^{p-1}\frac{2n+a}{a+n+p}\frac{(1+a)_{n-1}(b)_n(c)_n}{n(1+a-b)_n(1+a-c)_n(1+a+p)_n}\frac{(1-p)_{n-1}}{(n-1)!}\\
&=1-\frac{(1+a)_p(1+a-b-c)_p}{(1+a-b)_p(1+a-c)_p}
-\frac{2p+a}{a}\frac{(a)_p(b)_p(c)_p}{(1+a-b)_p(1+a-c)_p(1+a+p)_p}.
\end{split}
\end{equation}
We consider the equality above by taking $\bmod{p^2}$. In this case, since the right-hand side of \eqref{eq:key_A} is divisible by $p$, each term on the right-hand side of \eqref{eq:key_A} is considered by taking $\bmod{p}$. Thus, in $(\bZ/p^2\bZ)\bbra{a,b,c}$, the equation \eqref{eq:key_A} is equivalent to
\begin{align*}
p\sum_{n=1}^{p-1}\frac{2n+a}{n(n+a)}\frac{(b)_n(c)_n}{(1+a-b)_n(1+a-c)_n}
&=1-\frac{a(a-b-c)}{(a-b)(a-c)}\frac{(1+a)_{p-1}(1+a-b-c)_{p-1}}{(1+a-b)_{p-1}(1+a-c)_{p-1}}\\
&\quad-\frac{bc}{(a-b)(a-c)}\frac{(1+b)_{p-1}(1+c)_{p-1}}{(1+a-b)_{p-1}(1+a-c)_{p-1}}\\
&=-\frac{a(a-b-c)}{(a-b)(a-c)}\biggl\{\frac{(1+a)_{p-1}(1+a-b-c)_{p-1}}{(1+a-b)_{p-1}(1+a-c)_{p-1}}-1\bigg\}\\
&\quad -\frac{bc}{(a-b)(a-c)}\biggl\{\frac{(1+b)_{p-1}(1+c)_{p-1}}{(1+a-b)_{p-1}(1+a-c)_{p-1}}-1\biggr\}.
\end{align*}
By applying Lemma \ref{lem:Dougall_A} for the right-hand side of \eqref{eq:key_A}, we have 
\begin{align*}
&p\sum_{n=1}^{p-1}\frac{2n+a}{n(n+a)}\frac{(b)_n(c)_n}{(1+a-b)_n(1+a-c)_n}\\
&=\frac{p}{(a-b)(a-c)}\biggl\{(a-b-c)\sum_{n=2}^{\infty}\frZ_p(n)a^n+a\sum_{n=2}^{\infty}\frZ_p(n)(a-b-c)^n\\
&\quad+b\sum_{n=2}^{\infty}\frZ_p(n)c^n+c\sum_{n=2}^{\infty}\frZ_p(n)b^n
-(c-a)\sum_{n=2}^{\infty}\frZ_p(n)(b-a)^n-(b-a)\sum_{n=2}^{\infty}\frZ_p(n)(c-a)^n\biggr\}.
\end{align*}
in $(\bZ/p^2\bZ)\bbra{a,b,c}$. Since the equality in $\bF_p\bbra{a,b,c}$ obtained after dividing by $p$ holds for any sufficiently large prime $p$, we obtained the desired formula in $\cA\bbra{a,b,c}$.
\end{proof}

\begin{lem}\label{lem:G_to_psi}
In $\cA\bbra{A,B,W}$, we have
\begin{align*}
&\Bigl(G^{\frac{1}{2}}_{\wtd, \le p-1}(A, B, W)\bmod{p}\Bigr)_{p}\\
&=\frac{W^2}{2(1+\frac{A}{2})(1+\frac{B}{2})}\biggl\{\frac{A+B}{2}\psi_{\cA}(2W)+2\psi_{\cA}\Bigl(\frac{A+B}{2}W\Bigr)+\Bigl(1-\frac{A}{2}\Bigr)\psi_{\cA}\Bigl(\Bigl(1-\frac{B}{2}\Bigr)W\Bigr)\\
&\qquad+\Bigl(1-\frac{B}{2}\Bigr)\psi_{\cA}\Bigl(\Bigl(1-\frac{A}{2}\Bigr)W\Bigr)
+\Bigl(1+\frac{A}{2}\Bigr)\psi_{\cA}\Bigl(\Bigl(1+\frac{B}{2}\Bigr)W\Bigr)
+\Bigl(1+\frac{B}{2}\Bigr)\psi_{\cA}\Bigl(\Bigl(1+\frac{A}{2}\Bigr)W\Bigr)\biggr\}.
\end{align*}
\end{lem}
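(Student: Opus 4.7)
The plan is to follow the same strategy as for Proposition \ref{prop:F^1/2_Dougall} in the $\cS$-case: first find a closed form for $G^{\frac{1}{2}}_{\wtd, \le p-1}(A, B, W)$ as a finite hypergeometric-type sum, then identify it with the sum on the LHS of Lemma \ref{lem:Dougall_A} via an appropriate substitution, and finally simplify the resulting six-term $\psi_{\cA}$-bracket. The new inputs compared to the $\cS$-case are that Dougall's identity is replaced by Lemma \ref{lem:Dougall_A}, and the identity $\Gamma_1(W)\Gamma_1(-W) \equiv 1 \pmod{\zeta(2)\cZ}$ is replaced by the parity identity $\psi_{\cA}(-x) = -\psi_{\cA}(x)$.

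\medskip

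First I would repeat the derivation in Section 3 of the closed form of $G^{\frac{1}{2}}_{\wtd}(A, B, W)$, restricting every $\sum_{n=1}^{\infty}$ to $\sum_{n=1}^{p-1}$. Since the generating-function identity in Lemma \ref{lem:gen_func_fmh} and the partial-fraction identity $\sum_{a \ge 2}(2^{a-2}-1)(W/n)^a = W^3/(n(n-W)(n-2W))$ are both finite-level statements, this yields
\[
G^{\frac{1}{2}}_{\wtd, \le p-1}(A, B, W)
= \sum_{n=1}^{p-1}\frac{W^3(n-W)}{n(n-2W)}\cdot
\frac{(1-(1-\frac{A}{2})W)_{n-1}(1-(1-\frac{B}{2})W)_{n-1}}{(1-(1+\frac{A}{2})W)_n(1-(1+\frac{B}{2})W)_n}.
\]
Next, using the elementary identity $(-xW)_n = -xW(1-xW)_{n-1}$, I would substitute $a = -2W$, $b = -(1-\frac{A}{2})W$, $c = -(1-\frac{B}{2})W$ in Lemma \ref{lem:Dougall_A}. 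A termwise comparison shows that the Dougall LHS specializes to $\frac{2(1-A/2)(1-B/2)}{W}\,G^{\frac{1}{2}}_{\wtd, \le p-1}(A, B, W)$, so reducing mod $p$ and applying Lemma \ref{lem:Dougall_A} gives
\[
(G^{\frac{1}{2}}_{\wtd, \le p-1}(A, B, W) \bmod p)_p
= \frac{W}{2(1-\frac{A}{2})(1-\frac{B}{2})\,(a-b)(a-c)}\,\{\cdots\},
\]
where $\{\cdots\}$ is the six-term $\psi_{\cA}$-bracket of Lemma \ref{lem:Dougall_A} evaluated at the substituted parameters, and $(a-b)(a-c) = (1+\frac{A}{2})(1+\frac{B}{2})W^2$.

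\medskip

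The final step is to expand $\{\cdots\}$ and simplify. Under the substitution one has $a-b-c = -\frac{A+B}{2}W$, $b-a = (1+\frac{A}{2})W$, and $c-a = (1+\frac{B}{2})W$, so four of the six arguments of $\psi_{\cA}$ are negatives of expressions of the form $(\,\cdot\,)W$; applying $\psi_{\cA}(-x) = -\psi_{\cA}(x)$ flips these signs. This identity holds because $\frZ_{\cA}(n) = 0$ for even $n \ge 2$, which itself follows from the vanishing of $B_m$ for odd $m \ge 3$ applied to $\frZ_p(n) = B_{p-n}/n$. Each of the six summands then acquires a uniform factor of $W$, and combining with the prefactor $\frac{W}{2(1-A/2)(1-B/2)(1+A/2)(1+B/2)W^2}$ produces the claimed expression. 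The main obstacle will be the sign bookkeeping in this last step: one must track four parity flips and verify that the $(1 \pm \frac{A}{2})$, $(1 \pm \frac{B}{2})$ factors arising from the substituted coefficients combine correctly with the denominator $(1-\frac{A}{2})(1-\frac{B}{2})$ inherited from the prefactor.
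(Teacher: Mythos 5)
Your strategy is exactly the paper's: the paper's proof also consists of (i) truncating the closed form of $G^{\frac{1}{2}}_{\wtd}$ at $N=p-1$ (legitimate, as Lemma \ref{lem:gen_func_fmh} and the partial-fraction step are finite-level), (ii) substituting into the finite Dougall-type evaluation of Lemma \ref{lem:Dougall_A}, and (iii) invoking $\psi_{\cA}(-x)=-\psi_{\cA}(x)$, which as you note comes from $\frZ_{\cA}(2n)=0$. In fact your substitution $a=-2W$, $b=-(1-\frac{A}{2})W$, $c=-(1-\frac{B}{2})W$ is the right one: it reproduces the Pochhammer factors $(1-(1\mp\frac{A}{2})W)$, $(1-(1\mp\frac{B}{2})W)$ exactly and parallels the archimedean case, whereas the substitution $b=(1+\frac{A}{2})W$, $c=(1+\frac{B}{2})W$ printed in the paper would give $(1+a-b)_n=(1-(3+\frac{A}{2})W)_n$ and is evidently a slip.

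The gap is in your last sentence: if you actually do the sign bookkeeping you defer, the six-term bracket does \emph{not} come out as the displayed statement. With your substitution, $(a-b)(a-c)=(1+\frac{A}{2})(1+\frac{B}{2})W^2$, $a-b-c=-\frac{A+B}{2}W$, $b-a=(1+\frac{A}{2})W$, $c-a=(1+\frac{B}{2})W$, and only the first four terms acquire parity flips; the computation yields
\begin{align*}
\Bigl(G^{\frac{1}{2}}_{\wtd,\le p-1}(A,B,W)\bmod p\Bigr)_p
&=\frac{1}{2(1-\frac{A}{2})(1+\frac{A}{2})(1-\frac{B}{2})(1+\frac{B}{2})}
\biggl\{\frac{A+B}{2}\psi_{\cA}(2W)+2\psi_{\cA}\Bigl(\frac{A+B}{2}W\Bigr)\\
&\quad+\Bigl(1-\frac{A}{2}\Bigr)\psi_{\cA}\Bigl(\Bigl(1-\frac{B}{2}\Bigr)W\Bigr)
+\Bigl(1-\frac{B}{2}\Bigr)\psi_{\cA}\Bigl(\Bigl(1-\frac{A}{2}\Bigr)W\Bigr)\\
&\quad-\Bigl(1+\frac{B}{2}\Bigr)\psi_{\cA}\Bigl(\Bigl(1+\frac{A}{2}\Bigr)W\Bigr)
-\Bigl(1+\frac{A}{2}\Bigr)\psi_{\cA}\Bigl(\Bigl(1+\frac{B}{2}\Bigr)W\Bigr)\biggr\},
\end{align*}
i.e.\ minus signs on the two $(1+\frac{\cdot}{2})W$ terms and no $W^2$ in the prefactor. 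This corrected form, not the printed one, is what the left-hand side demands: since $\psi_{\cA}$ has no degree shift (unlike $\psi_1$), the coefficient of $W^k$ must be a multiple of $\frZ_{\cA}(k)$, and at $A=B=0$ the left-hand side is $\sum_{a}(2^{a-2}-1)\zeta_{\cA}(a)W^a=0$, which the bracket above satisfies while the all-plus version gives $2W^2\psi_{\cA}(W)$; one can also check, e.g., that the coefficient of $A^1B^0W^5$ above equals $5\frZ_{\cA}(5)=3\zeta_{\cA}(1,4)+\zeta_{\cA}(2,3)$, in agreement with a direct computation. So do not assert that the expansion ``produces the claimed expression'': carry it out, and record the corrected identity (the displayed statement of the lemma appears to contain typos in the prefactor and in the signs of the last two terms), since that is the version your subsequent weight-by-weight extraction has to be run from.
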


\begin{proof}
By a similar calculation to that of $G^{\frac{1}{2}}_{\wtd}(A,B,W)$ in the proof of Proposition \ref{prop:F^1/2_Dougall}, we have
\begin{equation*}
G^{\frac{1}{2}}_{\wtd, \le N}(A, B, W)
=\sum^{N}_{n=1}\frac{W^3(n-W)}{n(n-2W)}
\frac{(1-(1-\frac{A}{2})W)_{n-1}}{(1-(1+\frac{A}{2})W)_{n}}
\frac{(1-(1-\frac{B}{2})W)_{n-1}}{(1-(1+\frac{B}{2})W)_n}.
\end{equation*}
Then, by substituting $N=p-1, a=-2W, b=(1+\frac{A}{2})W, c=(1+\frac{B}{2})W$ in \eqref{eq:Dougall_A} and  using $\psi_{\cA}(-X)=-\psi_{\cA}(X)$, we obtained the desired formula.
\end{proof}

\begin{proof}[{Proof of Theorem \ref{thm:wtd SF2} for $\cF=\cA$}]
From \eqref{eq:F_to_G} and Lemma \ref{lem:G_to_psi}, by using the definition of $\psi_{\cA}(X)$ and $\frZ_{\cA}(n)=0$ for any even positive integer $n$, we have
\begin{align}\label{eq:exp_of_F}
\begin{split}
&\Bigl(F^{\frac{1}{2}}_{\wtd, \le p-1}(A, -B, W)\bmod{p}\Bigr)_p\\
&=
\sum_{n=1}^{\infty}\frZ_{\cA}(2n+1)\sum_{l=0}^{n}\sum_{a=0}^{l}\sum_{b=0}^{n-l-1}\binom{2n+1}{2l+1}
\biggl\{\Bigl(\frac{B}{2}\Bigr)^{2a+1}\Bigl(\frac{A}{2}\Bigr)^{2b}+\Bigl(\frac{A}{2}\Bigr)^{2a+1}\Bigl(\frac{B}{2}\Bigr)^{2b}\biggr\}W^{2n+1}.
\end{split}
\end{align}
Let $1\le i \le r<k$. We compare the coefficients of the both-hand side in $\eqref{eq:exp_of_F}$ at $A^{i-1}B^{r-i}W^k$. Since only terms with odd exponent of $W$ in the right-hand side of $\eqref{eq:exp_of_F}$ appear, Theorem \ref{thm:wtd SF2} for $\cF=\cA$ is proved if $k$ is even.

Let consider the case $k$ is odd and set $k=2n+1$. Since the parities of exponent of $A$ and $B$ in the left-hand side of \eqref{eq:conclusion} are different, $r$ must be even. Thus we have
\begin{equation}\label{eq:conclusion}
\sum_{\bk \in I_i(k,r)}(2^{k_i-2}-1)\zeta^{\frac{1}{2}}_{\cA}(\bk)
=
\begin{cases}
\dfrac{1}{2^{r-1}}\frZ_{\cA}(k)\displaystyle\sum_{l=i/2}^{(k-r+i-3)/2}\binom{k}{2l+1} & \text{if $i$ is even}, \\
-\dfrac{1}{2^{r-1}}\frZ_{\cA}(k)\displaystyle\sum_{l=(r-i+1)/2}^{(k-i-2)/2}\binom{k}{2l+1} & \text{if $i$ is odd}.
\end{cases}
\end{equation}
It is easy to see that the rational multiple in the right-hand side of \eqref{eq:conclusion} coincides with
\begin{equation*}
\frac{(-1)^{i}\{(-1)^r-(-1)^k\}}{2^{r}}\biggl\{2^{k-1}-\sum_{u=0}^{i-1}\binom{k-1}{u}-\sum_{v=0}^{r-i}\binom{k-1}{v}\biggr\},
\end{equation*}
we obtain the desired formula.
\end{proof}

\begin{rem}
Set
\begin{align}\label{eq:gen_N}
\begin{split}
F^{t}_{\le N}(A, B, W)
&\coloneqq \sum_{\substack{\bk, \bl \in \cI \\ a \ge2}}\zeta^{t}(\bk, a, \bl)A^{\dep(\bk)}B^{\dep(\bl)}W^{\wt(\bk)+\wt(\bl)+a}\\
&=\sum_{1\le i\le r<k}\biggl\{\sum_{\bk \in I_{i}(k,r)}\zeta^{t}_{\le p-1}(\bk)\biggr\}A^{i-1}B^{r-i}W^{k}.
\end{split}
\end{align}
By a similar calculation, we have
\begin{equation}\label{eq:gen_p-1}
F^{t}_{\le p-1}(A, B, W)
\equiv \sum_{n=1}^{p-1}\frac{W^2(n-W)}{n}
\frac{(1-(1-(1-t)A)W)_{n-1}}{(1-(1-(1-t)B)W)_n}
\frac{(1-(1+tB)W)_{n-1}}{(1-(1+tA)W)_n}\pmod{p}.
\end{equation}
On the other hand, from Gauss' hypergeometric theorem, for a positive integer $N$, we have
\begin{equation*}
\sum_{n=1}^{N}\frac{(a)_n}{n(1-b)_n}\frac{(-N)_n}{n!}
=\frac{(1-a-b)_N}{(1-a)_N},
\end{equation*}
which is equivalent to the Chu--Vandermonde identity. Therefore, by a similar argument, we have
\begin{equation}\label{eq:gen_A}
\Biggl(\sum_{n=1}^{p-1}\frac{(a)_n}{n(1-b)_n}\bmod{p}\Biggr)_p
=\frac{1}{b}\bigl\{\psi_{\cA}(a+b)-\psi_{\cA}(a)-\psi_{\cA}(b)\bigr\}.
\end{equation}
Thus, if we set $a=-W(1-A), b=-W(1-B)$ in \eqref{eq:gen_A}, and $t=0$ in \eqref{eq:gen_N} and \eqref{eq:gen_p-1}, we obtain the $i$-admissible sum formula for FMZV which was first proved by Saito and Wakabayashi \cite{SW}. Moreover, if we set $a=-W+(1-t)AW, b=W+tAW$ in \eqref{eq:gen_A}, we obtain the $r$-admissible sum formula for $t$-FMZV which was first proved by Seki \cite{S}.
\end{rem}

%%%%%%%%%%%%%%%%%%%%%%%%%%%%%%%%%%%%%%%%%%%%%%%%%%%%%%%%
%Section5%%%%%%%%%%%%%%%%%%%%%%%%%%%%%%%%%%%%%%%%%%%%%%%%%%%
%%%%%%%%%%%%%%%%%%%%%%%%%%%%%%%%%%%%%%%%%%%%%%%%%%%%%%%%
\section{Some sum formulas for refined symmetric multiple zeta values}

In this section, we prove some sum formulas of the interpolated refined symmetric multiple zeta values ($t$-RSMZVs). The refined symmetric multiple zeta value (RSMZV) was introduced first by Jarossay. In \cite{J}, he was introduced an object called the $\Lambda$-adjoint multiple zeta values by using the theory of associators, and the refined symmetric multiple zeta values as the coefficients of the $\Lambda$-adjoint multiple zeta values. Independently, Hirose \cite{Hi} also defined RSMZV using the notion of iterated integrals. 

\begin{defn}[{Interpolated refined SMZV}]
For an index $\bk$, set
\begin{equation*}
\zeta^{t, \ast}(\bk; T)
\coloneqq
\sum_{\bl \preceq \bk}\zeta^{\ast}(\bl; T)t^{\dep(\bk)-\dep(\bl)}
\in \cZ[t, T].
\end{equation*}
We define the interpolated refined symmetric multiple zeta value ($t$-RSMZV) $\zeta^{t}_{\cRS}(\bk) \in \cZ[2\pi\sqrt{-1}][t]$ by
\begin{equation}\label{eq:def_of_tRSMZV}
\zeta^{t}_{\cRS}(\bk)
\coloneqq
\sum_{i=0}^{\dep(\bk)}(-1)^{\wt(\bk^{[i]})}
\zeta^{t, \ast}\biggl(\bk_{[i]}; -\frac{\pi\sqrt{-1}}{2}\biggr)
\zeta^{t, \ast}\biggl(\overleftarrow{\bk^{[i]}}; \frac{\pi\sqrt{-1}}{2}\biggr).
\end{equation}
\end{defn}

\begin{rem}\label{rem:xi_RSMZV}
We define $t$-RSMZV by mimicking the definition of the $\xi$-values defined by Bachmann--Takeyama--Tasaka in \cite{BTT1}. Hirose mentioned in \cite[Remark 13]{Hi} that the refined symmetric multiple zeta value coincides with their $\xi$-value. We can give an equivalent definition of $t$-RSMZV by using the iterated integral expression. 
\end{rem}

\begin{rem}
It is easy to see that 
\begin{equation*}
\zeta^{t}_{\cRS}(\bk)
\equiv
\zeta^{t}_{\cS}(\bk) \bmod{2\pi\sqrt{-1}\cZ[2\pi\sqrt{-1}][t]}
\end{equation*}
in $\cZ[2\pi\sqrt{-1}][t]/2\pi\sqrt{-1}\cZ[2\pi\sqrt{-1}][t] \cong(\cZ[2\pi\sqrt{-1}]/2\pi\sqrt{-1}\cZ[2\pi\sqrt{-1}])[t]\cong\overline{\cZ}[t]$.
\end{rem}

\begin{prop}\label{prop:SFof1/2RS}
In $\cZ[2\pi\sqrt{-1}][A,B]\bbra{W}$, we have
\begin{align*}
&\sum_{\substack{\bk,\bl \in \cI \\ a\ge2}}(2^{a-2}-1)\zeta^{\frac{1}{2}}_{\cRS}(\bk,a,\bl)A^{\dep(\bk)}B^{\dep(\bl)}W^{\wt(\bk)+a+\wt(\bl)}\\
&=F^{\frac{1}{2}}_{\wtd}(A,-B,W)\frac{(2+B)\sin(\pi(1-\frac{B}{2})W)}{(2-B)\sin(\pi(1+\frac{B}{2})W)}\exp(-\pi\sqrt{-1}BW)\\
&\quad+F^{\frac{1}{2}}_{\wtd}(B,-A,-W)\frac{(2+A)\sin(\pi(1-\frac{A}{2})W)}{(2-A)\sin(\pi(1+\frac{A}{2})W)}\exp(\pi\sqrt{-1}AW).
\end{align*}
\end{prop}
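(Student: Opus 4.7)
The plan is to mimic the proof of Lemma \ref{lem:decomp} in the refined symmetric setting, now with the two regularization parameters set to the distinct values $T_1 := -\pi\sqrt{-1}/2$ and $T_2 := \pi\sqrt{-1}/2$ per \eqref{eq:def_of_tRSMZV}, and working in $\cZ[2\pi\sqrt{-1}]$ rather than modulo $\zeta(2)\cZ$. First, I would expand $\zeta^{\frac{1}{2}}_{\cRS}(\bk,a,\bl)$ via its definition and split the resulting sum over split-positions $i$ into two pieces: $Y_1$ (contributions with $i \ge \dep(\bk)+1$, so that $a$ lies in the left factor $\zeta^{\frac{1}{2},\ast}(\cdot;T_1)$) and $Y_2$ (contributions with $i \le \dep(\bk)$, so that $a$ lies in the right factor $\zeta^{\frac{1}{2},\ast}(\overleftarrow{\cdot};T_2)$).

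For $Y_1$, as in the proof of Lemma \ref{lem:decomp}, I would apply the lemma of Section 2 expressing $S^t(e_{\bk,a,\bl})$ as a sum of $\ast$-products of the form $K^t\binom{\bk}{\overleftarrow{\bl_{[j]}}}; a \ast S^t(e_{\bl^{[j]}})$ to rewrite $\zeta^{\frac{1}{2},\ast}(\bk,a,\bl_{[j]};T_1)$ as a combination of anti-hook values $\zeta^{\frac{1}{2}}\binom{\bk}{\overleftarrow{\bl'}}; a$ multiplied by $\zeta^{\frac{1}{2},\ast}(\bl'';T_1)$. The resulting triple sum factors as the product of (i) the anti-hook generating function $G^{\frac{1}{2}}_{\wtd}(A,-B,W)$ (after renaming $\bl'\mapsto\overleftarrow{\bl}$ and absorbing the sign $(-1)^{\dep(\bl)}$), (ii) a residual sum $\sum_{\bl''}\zeta^{\frac{1}{2},\ast}(\bl'';T_1)B^{\dep(\bl'')}W^{\wt(\bl'')}$, and (iii) a second sum $\sum_{\bl_2}(-1)^{\wt(\bl_2)}\zeta^{\frac{1}{2},\ast}(\overleftarrow{\bl_2};T_2)B^{\dep(\bl_2)}W^{\wt(\bl_2)}$, the latter two being evaluated as $\Gamma_1$-ratios via Corollary \ref{cor:t-ham-zeta} (with $W\mapsto-W$ in the third sum). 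The essential new feature compared to Lemma \ref{lem:decomp} is that these $\Gamma_1$-ratios do not collapse to $1$ modulo $\zeta(2)\cZ$: by Euler's reflection identity \eqref{eq:rec_gamma}, their $T$-independent parts combine to $\frac{(2+B)\sin(\pi(1-B/2)W)}{(2-B)\sin(\pi(1+B/2)W)}$, while their $T$-dependent parts $\exp(T_1 BW)$ and $\exp(-T_2 BW)$ no longer cancel (since $T_1\neq T_2$) and combine to $\exp(B(T_1-T_2)W)=\exp(-\pi\sqrt{-1}BW)$.

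The computation for $Y_2$ is analogous after using the reversal $\overleftarrow{\bk_2,a,\bl} = (\overleftarrow{\bl},a,\overleftarrow{\bk_2})$ to reduce to the same decomposition, now with the roles of $A$ and $B$ swapped and a substitution $W\mapsto-W$ absorbed into the anti-hook, ultimately producing $G^{\frac{1}{2}}_{\wtd}(B,-A,-W)$ multiplied by the analogous sine factor in $A$ and the exponential $\exp(\pi\sqrt{-1}AW)$; adding $Y_1 + Y_2$ yields the desired identity. The main obstacle will be the detailed bookkeeping: carefully tracking the arguments of the various $\Gamma_1(\pm(1\pm B/2)W)$ and $\Gamma_1(\pm(1\pm A/2)W)$ factors produced by the two applications of Corollary \ref{cor:t-ham-zeta} under the substitutions $W\mapsto-W$ and $\bl\mapsto\overleftarrow{\bl}$ together with the $(-1)^{\wt(\cdot)}$ and $(-1)^{\dep(\cdot)}$ sign conventions, and verifying that the specific pairing of $\Gamma_1$-factors which emerges is exactly the one transformed by the reflection identity into the stated sine factor.
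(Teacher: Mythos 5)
Your overall strategy is the paper's own: the published proof of this proposition is literally ``the same argument as Lemma~\ref{lem:decomp}, using Corollary~\ref{cor:t-ham-zeta}'', i.e.\ rerun the decomposition but keep the $\Gamma_1$-factors instead of discarding them modulo $\zeta(2)\cZ$. Your treatment of $Y_1$ is exactly right: the ratio of $\Gamma_1$'s collapses via the reflection formula \eqref{eq:rec_gamma} to $\frac{(2+B)\sin(\pi(1-\frac{B}{2})W)}{(2-B)\sin(\pi(1+\frac{B}{2})W)}$, and the no-longer-cancelling $T$-parts give $\exp(B(T_1-T_2)W)=\exp(-\pi\sqrt{-1}BW)$ (here $F^{\frac12}_{\wtd}$ in the statement is the anti-hook generating function $G^{\frac12}_{\wtd}$ of Section~3, as you correctly read it).

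The gap is in your $Y_2$ step: you assert the ``analogous'' computation produces $\exp(\pi\sqrt{-1}AW)$, but by your own mechanism it does not. In $Y_2$ the residual factor carrying $T_1$ is $\zeta^{\frac12,\ast}(\bk_{[i]};T_1)$, which carries no sign and is therefore paired with $+W$, while the residual carrying $T_2$ comes out of the reversed factor $\zeta^{\frac12,\ast}(\overleftarrow{\bk^{[i]},a,\bl};T_2)$, which carries $(-1)^{\wt(\cdot)}$ and is therefore paired with $-W$ --- exactly the same pairing as in $Y_1$. Hence the exponentials combine to $\exp(A(T_1-T_2)W)=\exp(-\pi\sqrt{-1}AW)$, and nothing in the bookkeeping flips this sign for the second summand only. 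A low-order check makes this concrete: the coefficient of $A^{1}B^{0}W^{4}$ on the left-hand side is $\zeta^{\frac12}_{\cRS}(1,3)=(T_2-T_1)\zeta(3)=\pi\sqrt{-1}\,\zeta(3)$, whereas the right-hand side as displayed yields $-\pi\sqrt{-1}\,\zeta(3)$ for that coefficient, and yields $+\pi\sqrt{-1}\,\zeta(3)$ if the second exponential is $\exp(-\pi\sqrt{-1}AW)$ (the real parts cancel either way). So, carried out faithfully, your argument establishes the identity with $\exp(-\pi\sqrt{-1}AW)$ in the second term; with the definition \eqref{eq:def_of_tRSMZV} as printed (unreversed factor at $-\pi\sqrt{-1}/2$, reversed factor at $+\pi\sqrt{-1}/2$) the sign in the displayed statement is itself inconsistent. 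As written, your proposal substitutes the target formula for precisely the piece of $Y_2$ bookkeeping where this discrepancy lives, so it does not yet constitute a proof; you need to carry out the $Y_2$ factorization explicitly and then either correct the exponential or the convention in \eqref{eq:def_of_tRSMZV}.
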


\begin{proof}
This proposition is proved in the same way as Lemma \ref{lem:decomp} by using Corollary \ref{cor:t-ham-zeta}.
\end{proof}

%\begin{thm}\label{thm:wt SF3}
For non-negative integers $h \le l$, set
\begin{multline*}
a_{h,l}
\coloneqq
\sum_{0 \le i \le j \le \lfloor l/2 \rfloor}\sum_{u \in \bZ}
(-1)^{l-j+u}\frac{(2-2^{2i})2^{-2j-h}B_{2i}}{(2i)!(2j-2i+1)!(l-2j)!}\\
\times\binom{2j-2i}{u}\binom{2i}{2j-(l-h)-u}(2\pi\sqrt{-1})^{l},
\end{multline*}
where we understand that the binomial coefficient $\binom{a}{b}$ is zero if $a<b$ or $b<0$. Therefore the inner sum is finite and $a_{h,l}$ is well-defined.
\begin{thm}\label{thm:wt SF3}
For positive integers $i \le r<k$, we have
\begin{align}\label{eq:SF3}
\begin{split}
&\sum_{\bk=(k_1, \ldots, k_r) \in I_i(k,r)}(2^{k_i-2}-1)\zeta^{\frac{1}{2}}_{\cRS}(\bk)\\
&=\sum_{0\le h \le l \le k-2}\biggl\{(-1)^{r-i-h}\biggl(2^{k-l-1}-\sum_{u=0}^{i-1}\binom{k-l-1}{u}-\sum_{v=0}^{r-i-h}\binom{k-l-1}{v}\biggl)\\
&\quad+(-1)^{k+i-1-h}\biggl(2^{k-l-1}-\sum_{u=0}^{r-i}\binom{k-l-1}{u}-\sum_{v=0}^{i-1-h}\binom{k-l-1}{v}\biggr)\biggr\}
a_{h,l}\frac{\zeta(k-l)}{2^{r-h}}.
\end{split}
\end{align}
\end{thm}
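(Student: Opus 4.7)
The plan is to extract coefficients from the generating-function identity in Proposition \ref{prop:SFof1/2RS}, using the coefficient data for $G^{\frac{1}{2}}_{\wtd}(A,B,W)$ that comes out of the proof of Theorem \ref{thm:wtd SF2}, and then identify the Taylor coefficients of the trigonometric correction factor with $a_{h,l}$.

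First I would introduce the generating function
\begin{equation*}
F^{\frac{1}{2}}_{\wtd,\cRS}(A,B,W) := \sum_{\substack{\bk,\bl \in \cI \\ a \ge 2}}(2^{a-2}-1)\zeta^{\frac{1}{2}}_{\cRS}(\bk,a,\bl)A^{\dep(\bk)}B^{\dep(\bl)}W^{\wt(\bk)+a+\wt(\bl)},
\end{equation*}
so that the left-hand side of \eqref{eq:SF3} is $[A^{i-1}B^{r-i}W^k]F^{\frac{1}{2}}_{\wtd,\cRS}(A,B,W)$. By Proposition \ref{prop:SFof1/2RS}, this generating function equals $G^{\frac{1}{2}}_{\wtd}(A,-B,W)\,H(B,W)+G^{\frac{1}{2}}_{\wtd}(B,-A,-W)\,H(A,-W)$, where
\begin{equation*}
H(B,W) := \frac{(2+B)\sin(\pi(1-\frac{B}{2})W)}{(2-B)\sin(\pi(1+\frac{B}{2})W)}\exp(-\pi\sqrt{-1}BW).
\end{equation*}

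Next, by the same expansion of Proposition \ref{prop:F^1/2_Dougall} that underlies equation \eqref{eq:coeff_Ar-1}, the two-variable version reads
\begin{equation*}
C_{i,r,k} := [A^{i-1}B^{r-i}W^k]G^{\frac{1}{2}}_{\wtd}(A,B,W) = \frac{\zeta(k)}{2^r}\biggl\{2^{k-1}-\sum_{u=0}^{i-1}\binom{k-1}{u}-\sum_{v=0}^{r-i}\binom{k-1}{v}\biggr\}.
\end{equation*}
Writing $H(B,W)=\sum_{h,l\ge 0}h_{h,l}B^hW^l$ and convolving gives
\begin{equation*}
[A^{i-1}B^{r-i}W^k]F^{\frac{1}{2}}_{\wtd,\cRS}(A,B,W) = \sum_{h,l}\Bigl((-1)^{r-i-h}C_{i,r-h,k-l}+(-1)^{k+i-1-h}C_{r-i+1,r-h,k-l}\Bigr)h_{h,l}.
\end{equation*}
Substituting the closed form for $C$ pulls out the overall factor $\zeta(k-l)/2^{r-h}$ and reproduces exactly the bracketed expression on the right-hand side of \eqref{eq:SF3}, provided $h_{h,l}=a_{h,l}$.

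The main obstacle is this last identification. To establish it, I would use the angle-addition identity
\begin{equation*}
\sin\bigl(\pi(1\pm B/2)W\bigr)=\sin(\pi W)\cos(\pi BW/2)\pm\cos(\pi W)\sin(\pi BW/2)
\end{equation*}
to reduce the ratio of sines to $(1-u)/(1+u)$ with $u=\cot(\pi W)\tan(\pi BW/2)$, expand this as a geometric series, and substitute the standard Taylor expansions of $\sin,\cos,\tan$ (which bring in Bernoulli numbers via $\tan z=\sum_{n\ge 1}\frac{(-1)^{n-1}2^{2n}(2^{2n}-1)B_{2n}}{(2n)!}z^{2n-1}$), together with $(2+B)/(2-B)=1+2\sum_{n\ge 1}(B/2)^n$ and the exponential series of $\exp(-\pi\sqrt{-1}BW)$. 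Careful bookkeeping of the resulting triple sum, with summation indices playing the role of $(i,j,u)$ in the definition, should recover precisely the expression for $a_{h,l}$ in the statement; in particular one checks that $h_{h,l}=0$ unless $0\le h\le l$, which justifies the summation range in \eqref{eq:SF3}.
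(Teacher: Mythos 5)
Your proposal is correct and follows essentially the same route as the paper's proof: it uses the decomposition of Proposition \ref{prop:SFof1/2RS}, the coefficient of $G^{\frac{1}{2}}_{\wtd}(A,B,W)$ at $A^{a}B^{b}W^{k}$ coming from Proposition \ref{prop:F^1/2_Dougall} (your $C_{i,r,k}$, the paper's $Z_{a,b}(k)$), and then the same convolution with the Taylor coefficients of the trigonometric factor, which the paper likewise identifies with $a_{h,l}$ by Taylor-expanding $\sin$, $\csc$ and $\exp$ without further detail. The only (inessential) deviation is your cot--tan/geometric-series expansion of the sine ratio; it is cleaner to observe that $\frac{2+B}{2-B}$ cancels the linear prefactors, leaving $\frac{\sin x}{x}\cdot\frac{y}{\sin y}\cdot\exp(-\pi\sqrt{-1}BW)$ with $x=\pi\bigl(1-\tfrac{B}{2}\bigr)W$, $y=\pi\bigl(1+\tfrac{B}{2}\bigr)W$, whose triple-product expansion is exactly the shape of the stated $a_{h,l}$.
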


\begin{proof}
Set
\begin{equation*}
Z_{a,b}(k)
\coloneqq \Biggl\{2^{k-1}-\sum_{u=0}^{a}\binom{k-1}{u}-\sum_{v=0}^{b}\binom{k-1}{v}\Biggr\}\frac{\zeta(k)}{2^{a+b+1}}.
\end{equation*}
Then, by the Taylor expansion of $\sin X, \csc X=1/\sin X$ and $\exp X$, we have
\begin{align*}
\frac{(2+B)\sin(\pi(1-\frac{B}{2})W)}{(2-B)\sin(\pi(1+\frac{B}{2})W)}\exp(-\pi\sqrt{-1}BW)
=\sum_{0 \le h \le k}a_{h,k}B^hW^k.
\end{align*}
Therefore, the coefficient of 
\begin{equation*}
F^{\frac{1}{2}}_{\wtd}(A, -B, W)\frac{(2+B)\sin(\pi(1-\frac{B}{2})W)}{(2-B)\sin(\pi(1+\frac{B}{2})W)}\exp(-\pi\sqrt{-1}BW)
\end{equation*}
at $A^{a}B^{b}W^{k}$ coincides with
\begin{equation*}
\sum_{0\le h \le l \le k-2}(-1)^{b-h}a_{h,l}Z_{a,b-h}(k-l).
\end{equation*}
Therefore, the coefficient of the right-hand side of \eqref{eq:SF3} at $A^{i-1}B^{r-i}W^k$ coincides with
\begin{equation*}
\sum_{0\le h \le l \le k-2}\bigl\{(-1)^{r-i-h}a_{h,l}Z_{i-1, r-i-h}(k-l)+(-1)^{k+i-1-h}a_{h,l}Z_{r-i, i-1-h}(k-l)\bigr\},
\end{equation*}
which completes the proof of Proposition \ref{prop:SFof1/2RS}.
\end{proof}

\begin{prop}\label{prop:SF_of_t-RS}
In $\cZ[2\pi \sqrt{-1}][t, A, B]\bbra{W}$, we have
\begin{align}\label{eq:gen_tRSMZV}
\begin{split}
&\sum_{\substack{\bk, \bl \in \cI \\ a\ge2}}\zeta^{t}_{\cRS}(\bk, a, \bl)A^{\dep(\bk)}B^{\dep(\bl)}W^{\wt(\bk)+a+\wt(\bl)}\\
&=F^{t}(A,-B,W)\frac{(1+tB)\sin(\pi(1-(1-t)B)W)}{(1-(1-t)B)\sin(\pi(1+tB)W)}\exp(-\pi\sqrt{-1}BW)\\
&\quad+F^{t}(B,-A,-W)\frac{(1+tA)\sin(\pi(1-(1-t)A)W)}{(1-(1-t)A)\sin(\pi(1+tA)W)}\exp(\pi\sqrt{-1}AW).
\end{split}
\end{align}
\end{prop}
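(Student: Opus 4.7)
The plan is to adapt the proof of Proposition~\ref{prop:SFof1/2RS} (itself following the template of Lemma~\ref{lem:decomp}) to the setting of $t$-RSMZVs for general $t$ and without the weight $2^{a-2}-1$. By the definition~\eqref{eq:def_of_tRSMZV} of $\zeta^{t}_{\cRS}(\bk,a,\bl)$, expand the left-hand side of~\eqref{eq:gen_tRSMZV} as a double sum over the cut position in the index $(\bk,a,\bl)$. The terms split naturally into two parts $Y_1$ and $Y_2$, according to whether the cut lies strictly after $a$ (so that the left argument takes the form $(\bk,a,\bl_{[j]})$ for some $j\in\{0,\dots,\dep(\bl)\}$) or at or before $a$ (so that the left argument is $\bk_{[i]}$ for some $i\in\{0,\dots,\dep(\bk)\}$).

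For $Y_1$, writing $\bl=(\bl_1,\bl_2)$ with $\bl_1=\bl_{[j]}$ and $\bl_2=\bl^{[j]}$ factorizes the sum, and the reindexing $\bl_2\mapsto\overleftarrow{\bl_2}$ together with Corollary~\ref{cor:t-ham-zeta} (applied with $A\mapsto B$, $W\mapsto -W$, $T=\pi\sqrt{-1}/2$) evaluates one factor as a ratio of $\Gamma_1$-values of the form $\Gamma_1(-(1+tB)W)/\Gamma_1(-(1-(1-t)B)W)$. For the remaining factor involving $\zeta^{t,\ast}(\bk,a,\bl_1;-\pi\sqrt{-1}/2)$, first apply the $K^{t}$-lemma $S^{t}(e_{\bk,a,\bl_1})=\sum_{j}(-1)^{j}K^{t}\bigl(\begin{matrix}\bk\\\overleftarrow{(\bl_1)_{[j]}}\end{matrix};a\bigr)\ast S^{t}(e_{(\bl_1)^{[j]}})$ together with the fact that $Z^{\ast}_{T}$ is a $\bQ$-algebra homomorphism from $\frH^{1}_{\ast}$ to $\bR[T]$; then split $\bl_1=(\bl_1',\bl_1'')$ with $j=\dep(\bl_1')$. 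The signs $(-1)^{j}$ combine with the reversal $\overleftarrow{\bl_1'}$ and the factor $B^{\dep(\bl_1')}$ to produce $F^{t}(A,-B,W)$, while the $\bl_1''$-sum contributes a second $\Gamma_1$-ratio $\Gamma_1((1+tB)W)/\Gamma_1((1-(1-t)B)W)$ via Corollary~\ref{cor:t-ham-zeta} at $T=-\pi\sqrt{-1}/2$.

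The product of the two $\Gamma_1$-ratios collapses to a sine quotient by the functional equation $\Gamma_1(W)\Gamma_1(-W)=\pi W/\sin(\pi W)$, which is equivalent to $\Gamma(1-W)\Gamma(1+W)=\pi W/\sin(\pi W)$ after the $\exp(TW)$ pieces inside $\Gamma_1$ are cancelled between numerator and denominator. The residual $T$-dependent exponentials left over from the two applications of Corollary~\ref{cor:t-ham-zeta} at $T=\pm\pi\sqrt{-1}/2$ assemble into the prefactor $\exp(-\pi\sqrt{-1}BW)$. The piece $Y_2$ is treated by the entirely analogous argument, with the roles of $\bk,\bl$ and $A,B$ interchanged and $W$ replaced by $-W$, yielding $F^{t}(B,-A,-W)$ multiplied by the corresponding factor in $A$.

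The main obstacle is the careful bookkeeping of signs: one must track simultaneously the $(-1)^{\wt(\cdots)}$ signs from definition~\eqref{eq:def_of_tRSMZV}, the $(-1)^{j}$ signs from the $K^{t}$-lemma, the reversal operations $\overleftarrow{\bullet}$ appearing at several places in the decomposition, and the $T$-dependent exponentials arising from the various $\Gamma_1$-factors at $T=\pm\pi\sqrt{-1}/2$, and verify that they all conspire to give exactly the stated sine and exponential prefactors.
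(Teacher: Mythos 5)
Your proposal is correct and follows exactly the route the paper intends: the paper's proof is literally ``proved in the same way as Lemma~\ref{lem:decomp} using Corollary~\ref{cor:t-ham-zeta}'', i.e.\ split the sum in \eqref{eq:def_of_tRSMZV} at the cut position, evaluate the tail sums by Corollary~\ref{cor:t-ham-zeta} at $T=\pm\pi\sqrt{-1}/2$, convert the head factor via the $K^t$-lemma into $F^{t}(A,-B,W)$ (resp.\ $F^{t}(B,-A,-W)$), and collapse the $\Gamma_1$-ratios by $\Gamma_1(W)\Gamma_1(-W)=\pi W/\sin\pi W$, keeping the sine quotients and $\exp(\mp\pi\sqrt{-1}\,\cdot\,W)$ factors that in the $\cS$-case would be discarded modulo $\zeta(2)\cZ$. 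Your sign and exponential bookkeeping (including the corrected $(1+tB)$ versus $(1-(1-t)B)$ arguments) matches what the paper actually uses, so no gap.
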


\begin{proof}
This proposition is also proved in the same way as Lemma \ref{lem:decomp} by using Corollary \ref{cor:t-ham-zeta}.
\end{proof}

For non-negative integers $h \le l$, we define $a_{h,l}(t) \in \bQ[2\pi\sqrt{-1}][t]$ by
\begin{multline*}
a_{h,l}(t)
\coloneqq
\sum_{0 \le i \le j \le \lfloor l/2 \rfloor}\sum_{u\in \bZ}(-1)^{l-j+u}\frac{(2-2^{2i})B_{2i}}{(2i)!(2j-2i+1)!(l-2j)!}\\
\times\binom{2j-2i}{u}\binom{2i}{2j-(l-h)-u}t^{2j-(l-h)-u}(1-t)^{u}(2\pi\sqrt{-1})^{l}.
\end{multline*}
As mentioned in Remark \ref{rem:xi_RSMZV}, Bachmann--Takeyama--Tasaka's $\xi$-value coincides with the RSMZV. Thus, the following theorem can be regarded as an interpolation of their result \cite[Corollary 1.3]{BTT2}.
\begin{thm}\label{thm:t-BTT}
For positive integers $r<k$, we have
\begin{align*}
\sum_{\bk \in I_r(k,r)}\zeta^{t}_{\cRS}(\bk)
&=\sum_{j=0}^{r-1}\binom{k-1}{j}t^j(1-t)^{r-1-j}\zeta(k)\\
&\quad +\sum_{0\le h\le l \le k-2}\sum_{j=0}^{r-1-h}(-1)^{r-1-h+k}\binom{k-l-1}{j}(1-t)^{j}t^{r-1-h-j}a_{h,l}(t)\zeta(k-l).
\end{align*}
\end{thm}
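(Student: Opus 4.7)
The plan is to mimic the argument of Theorem \ref{thm:wt SF3}, replacing Proposition \ref{prop:SFof1/2RS} with its $t$-deformation in Proposition \ref{prop:SF_of_t-RS}. Since $I_r(k,r)$ only constrains the last entry to be at least $2$, the sum on the left-hand side of Theorem \ref{thm:t-BTT} is precisely the coefficient of $A^{r-1}B^{0}W^{k}$ in the generating function on the left of \eqref{eq:gen_tRSMZV}.

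First, I would specialise \eqref{eq:gen_tRSMZV} at $B=0$. The factor $\frac{(1+tB)\sin(\pi(1-(1-t)B)W)}{(1-(1-t)B)\sin(\pi(1+tB)W)}\exp(-\pi\sqrt{-1}BW)$ collapses to $\sin(\pi W)/\sin(\pi W)=1$ at $B=0$, so the first summand contributes just $F^{t}(A,0,W)$. Extracting $[A^{r-1}W^{k}]$ from $F^{t}(A,0,W)$ yields $\sum_{\bk\in I_{r}(k,r)}\zeta^{t}(\bk)$, which by Yamamoto's $t$-deformed sum formula (obtained in the remark after Theorem \ref{thm:wtd SF2} from the generating function $G^{t}$ and Gauss' identity) equals $\sum_{j=0}^{r-1}\binom{k-1}{j}t^{j}(1-t)^{r-1-j}\zeta(k)$. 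This reproduces the first term on the right-hand side of Theorem \ref{thm:t-BTT}.

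Second, I would treat the remaining contribution coming from $F^{t}(B,-A,-W)\cdot H_{2}(A,W)$ at $B=0$, where $H_{2}(A,W)=\frac{(1+tA)\sin(\pi(1-(1-t)A)W)}{(1-(1-t)A)\sin(\pi(1+tA)W)}\exp(\pi\sqrt{-1}AW)$. I would expand $H_{2}(A,W)$ as a formal power series $\sum_{h,l}a_{h,l}(t)A^{h}W^{l}$ by combining the Taylor series of $\sin$, the Bernoulli-number expansion of $\pi z\csc(\pi z)$, and the series of $\exp$; the auxiliary index $u$ in the definition of $a_{h,l}(t)$ parametrises the split of the $t$ and $1-t$ exponents between the $\sin$ and $\exp$ factors, while the coefficients $(2-2^{2i})B_{2i}$ come from the $\csc$ expansion. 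For $F^{t}(0,-A,-W)$ I would use the initial condition $K^{t}(\varnothing/\bl;a)=S^{1-t}(e_{\bl,a})$ of the Schur-type value to identify $[A^{r-1-h}W^{k-l}]F^{t}(0,-A,-W)$ with a signed sum of $\zeta^{1-t}(\bk')$ over $\bk'\in I_{r-h}(k-l,r-h)$, to which Yamamoto's formula applies with $t$ replaced by $1-t$. Convolving these two expansions and summing over $h,l$ yields the second sum in Theorem \ref{thm:t-BTT}.

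The principal technical obstacle will be the power-series expansion of $H_{2}(A,W)$: verifying that the elementary triple Taylor expansion matches the combinatorial triple sum defining $a_{h,l}(t)$ requires careful bookkeeping of the binomial expansions of $(1-(1-t)A)^{m}$ and $(1+tA)^{-n}$ together with a Cauchy product of the $\sin$, $\csc$, and $\exp$ series. Equally delicate is the sign accounting: the $(-1)$'s arising from the substitutions $A\mapsto -A$ and $W\mapsto -W$ inside $F^{t}(0,-A,-W)$ must combine cleanly with the $(-1)^{l-j+u}$ factor already built into $a_{h,l}(t)$ to produce the sign $(-1)^{r-1-h+k}$ appearing in the theorem. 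Once these expansions are in place, the coefficient extraction proceeds exactly as in the proof of Theorem \ref{thm:wt SF3}.
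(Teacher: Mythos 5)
Your outline is the paper's own proof: specialize Proposition \ref{prop:SF_of_t-RS} at $B=0$ (the first trigonometric factor indeed degenerates to $1$), read the first term off $F^{t}(A,0,W)$ via Yamamoto's interpolated sum formula (this is exactly \eqref{eq:gen_Ft}), rewrite $F^{t}(0,-A,-W)=F^{1-t}(-A,0,-W)$ so that its coefficient at $A^{r-1-h}W^{k-l}$ is $(-1)^{(r-1-h)+(k-l)}\sum_{j=0}^{r-1-h}\binom{k-l-1}{j}(1-t)^{j}t^{r-1-h-j}\zeta(k-l)$ (this is \eqref{eq:gen_Ft_anti}), expand the remaining $\sin/\csc/\exp$ factor, and convolve, just as in Theorem \ref{thm:wt SF3}.

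The one concrete point where your proposal, as written, would fail is the identification $H_{2}(A,W)=\sum_{h,l}a_{h,l}(t)A^{h}W^{l}$. The coefficients $a_{h,l}(t)$ play the same role as the $a_{h,l}$ in Theorem \ref{thm:wt SF3}: they encode the factor carrying $\exp(-\pi\sqrt{-1}\,\cdot\,W)$, i.e.\ the factor attached to the \emph{first} summand of \eqref{eq:gen_tRSMZV}, which is $H_{2}(A,-W)$. Since your $H_{2}(A,W)$ is obtained from that factor by $W\mapsto -W$, the correct statement is $[A^{h}W^{l}]H_{2}(A,W)=(-1)^{l}a_{h,l}(t)$, which is precisely what the paper records in \eqref{eq:gen_sin} through the extra factor $(-1)^{k}$. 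This sign is not a bookkeeping nicety that "combines cleanly'' on its own: it is exactly what converts the naive sign $(-1)^{(r-1-h)+(k-l)}$ coming from $A\mapsto -A$, $W\mapsto -W$ in $F^{t}(0,-A,-W)$ into the $(-1)^{r-1-h+k}$ of the statement, and since $a_{h,l}(t)\neq 0$ for odd $l$ (already $a_{1,1}(t)\neq 0$, as $H_{2}$ is not even in $W$), your version of the expansion would produce a second sum with sign $(-1)^{r-1-h+k-l}$, i.e.\ a formula genuinely different from Theorem \ref{thm:t-BTT}. Everything else in your plan (the index $u$ splitting the two binomial expansions, the Bernoulli numbers from the cosecant series, Yamamoto's formula applied at $1-t$) matches the paper; once the $a_{h,l}(t)$ are attributed to the conjugate factor, the coefficient extraction closes as you describe.
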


\begin{proof}
By Proposition \ref{prop:SF_of_t-RS}, we have
\begin{align}\label{eq:gen_tRS_adm}
\begin{split}
&\sum_{\substack{\bk \in \cI \\ a\ge2}}\zeta^{t}_{\cRS}(\bk, a)A^{\dep(\bk)}W^{\wt(\bk)+a}\\
&=F^{t}(A, 0, W)+F^{t}(0, -A, -W)\frac{(1+tA)\sin(\pi(1-(1-t)A)W)}{(1-(1-t)A)\sin(\pi(1+tA)W)}\exp(\pi\sqrt{-1}AW).
\end{split}
\end{align}
We have
\begin{align}\label{eq:gen_Ft}
\begin{split}
F^{t}(A,0,W)
&=\sum_{n=1}^{\infty}\frac{W^2}{n(n-(1-(1-t)A)W))}\frac{(1-(1-(1-t)A)W)_n}{(1-(1+tA)W)_n}\\
&=\sum_{k=2}^{\infty}\sum_{r=0}^{k-2}\sum_{j=0}^{r}\binom{k-1}{j}t^{j}(1-t)^{r-j}\zeta(k)A^rW^k
\end{split}
\end{align}
and 
\begin{equation}\label{eq:gen_Ft_anti}
F^{t}(0, -A, -W)
=F^{1-t}(-A, 0, -W)
=\sum_{k=2}^{\infty}\sum_{r=0}^{k-2}(-1)^{r+k}\sum_{j=0}^{r}\binom{k-1}{j}(1-t)^jt^{r-j}\zeta(k)A^rW^k.
\end{equation}
By the Taylor expansion of $\sin X, \csc X=1/\sin X$ and $\exp X$, we have
\begin{equation}\label{eq:gen_sin}
\frac{(1+tA)\sin(\pi(1-(1-t)A)W)}{(1-(1-t)A)\sin(\pi(1+tA)W)}\exp(\pi\sqrt{-1}AW)
=\sum_{h=0}^{k}(-1)^ka_{h,k}(t)A^hW^k.
\end{equation}
Then, from \eqref{eq:gen_tRS_adm}, \eqref{eq:gen_Ft}, \eqref{eq:gen_Ft_anti} and \eqref{eq:gen_sin}, we see that the coefficient of the right-hand side of \eqref{eq:gen_tRSMZV} at $A^{r-1}B^0W^k$ coincides with the sum of the coefficient of 
\begin{equation*}
\sum_{i=0}^{k-2}\sum_{s=0}^{k-2-i}(-1)^{s+k-i}\sum_{j=0}^{s}\binom{k-i-1}{j}(1-t)^{j}t^{s-j}\zeta(k-i)A^s
\cdot \sum_{h=0}^{i}(-1)^{i}a_{h,i}A^h
\end{equation*}
at $A^{r-1}$ and 
\begin{equation*}
\sum_{j=0}^{r-1}\binom{k-1}{j}t^{j}(1-t)^{r-1-j}\zeta(k).
\end{equation*}
This completes the proof of Theorem \ref{thm:t-BTT}.
\end{proof}

Combining Theorems \ref{thm:wt SF3} and \ref{thm:t-BTT}, we obtain the weighted sum formula for 1/2-RSMZV of the same type as Theorem \ref{thm:wtd SF2}. Let $\delta_{a,b}$ denote Kronecker's delta.

\begin{cor}
For positive integers $r<k$, we have
\begin{align*}
&\sum_{(k_1, \ldots, k_r) \in I_r(k,r)}2^{k_r}\zeta^{\frac{1}{2}}_{\cRS}(k_1, \ldots, k_r)\\
&=\sum_{0 \le h \le l \le k-2}\Biggl\{(-1)^h\biggl(2^{k-l-1}-\sum_{j=0}^{r-1}\binom{k-l-1}{j}-\delta_{h,0}\biggr)\\
&\quad\qquad\qquad+(-1)^{k+r-1-h}\biggl(2^{k-l-1}+\sum_{j=0}^{r-1-h}\binom{k-l-1}{j}-1\biggr)
+2\sum_{j=0}^{r-1-h}\binom{k-1}{j}\Biggr\}a_{h,l}\frac{\zeta(k-l)}{2^{r-h}}.
\end{align*}
\end{cor}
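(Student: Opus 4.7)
The plan is to split $2^{k_r} = 4(2^{k_r-2}-1) + 4$, which gives
\begin{align*}
\sum_{\bk \in I_r(k,r)} 2^{k_r}\zeta^{\frac{1}{2}}_{\cRS}(\bk) &= 4\sum_{\bk \in I_r(k,r)}(2^{k_r-2}-1)\zeta^{\frac{1}{2}}_{\cRS}(\bk) \\
&\quad + 4\sum_{\bk \in I_r(k,r)}\zeta^{\frac{1}{2}}_{\cRS}(\bk).
\end{align*}
The first sum on the right is the $i = r$ specialization of Theorem \ref{thm:wt SF3}, and the second is the $t = 1/2$ specialization of Theorem \ref{thm:t-BTT}. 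So the proof reduces to adding these two closed forms and recognizing the result as the stated right-hand side.

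For Theorem \ref{thm:wt SF3} at $i = r$, the range $v = 0, \ldots, r-i-h = -h$ degenerates, so $\sum_{v=0}^{-h}\binom{k-l-1}{v} = \delta_{h,0}$, while $\sum_{u=0}^{0}\binom{k-l-1}{u} = 1$ and $(-1)^{r-i-h} = (-1)^h$. This puts the first sum in the form $\sum_{0 \le h \le l \le k-2}\{\cdots\}\,a_{h,l}\zeta(k-l)/2^{r-h}$ of the corollary. For Theorem \ref{thm:t-BTT} at $t = 1/2$, the factor $(1/2)^j(1/2)^{r-1-h-j} = 1/2^{r-1-h}$ makes the inner summand independent of $j$, and one uses the relation between $a_{h,l}(1/2)$ and $a_{h,l}$ (which follows by specializing the generating function of Proposition \ref{prop:SF_of_t-RS} at $t = 1/2$ and comparing with Proposition \ref{prop:SFof1/2RS} after $W \mapsto -W$) to reach the same common factor $a_{h,l}\zeta(k-l)/2^{r-h}$ up to an overall rescaling by $4$.

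Finally, the $\zeta(k)$ contribution $(1/2)^{r-1}\bigl(\sum_{j=0}^{r-1}\binom{k-1}{j}\bigr)\zeta(k)$ coming from Theorem \ref{thm:t-BTT} is absorbed into the $(h,l) = (0,0)$ summand of the outer sum via $a_{0,0} = 1$; this is the origin of the third term $2\sum_{j=0}^{r-1-h}\binom{k-1}{j}$ in the braces. Combining Theorem \ref{thm:wt SF3}'s $-\sum_v\binom{k-l-1}{v}$ with the rescaled $\sum_j\binom{k-l-1}{j}$ from Theorem \ref{thm:t-BTT} (both appearing with the sign $(-1)^{k+r-1-h}$) produces the expression $+\sum_j\binom{k-l-1}{j}-1$ in the second parenthesis. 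The main obstacle is precisely this combinatorial bookkeeping---matching the signs $(-1)^h$, $(-1)^{k+r-1-h}$, the various powers of $2$, and the shifted upper limits of the inner binomial sums---but no new analytic input is needed beyond Theorems \ref{thm:wt SF3} and \ref{thm:t-BTT}.
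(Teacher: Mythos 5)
Your overall route is exactly the paper's: the paper's proof of this corollary is the single remark that it follows by combining Theorems \ref{thm:wt SF3} and \ref{thm:t-BTT}, and your split $2^{k_r}=4(2^{k_r-2}-1)+4$, your specialization of Theorem \ref{thm:wt SF3} at $i=r$ (including $\sum_{v=0}^{-h}\binom{k-l-1}{v}=\delta_{h,0}$, $\sum_{u=0}^{0}\binom{k-l-1}{u}=1$, $(-1)^{r-i-h}=(-1)^{h}$) and your specialization of Theorem \ref{thm:t-BTT} at $t=\frac12$ are all correct, as is the observation that the $(-1)^{k+r-1-h}$ pieces recombine via $(2^{k-l-1}-1-\Sigma)+2\Sigma=2^{k-l-1}+\Sigma-1$ with $\Sigma=\sum_{j=0}^{r-1-h}\binom{k-l-1}{j}$.

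The gap is in the final matching step, which you defer as ``bookkeeping'' but which is precisely where the argument breaks down. (i) After your split, every contribution of Theorem \ref{thm:wt SF3} enters with an overall factor $4$, while the stated right-hand side carries the same weight $a_{h,l}\zeta(k-l)/2^{r-h}$ as Theorem \ref{thm:wt SF3} itself; ``up to an overall rescaling by $4$'' is the discrepancy to be resolved, not a detail. (ii) The term $\frac{1}{2^{r-1}}\sum_{j=0}^{r-1}\binom{k-1}{j}\zeta(k)$ from Theorem \ref{thm:t-BTT} is a single standalone term, so ``absorbing it into the $(h,l)=(0,0)$ summand via $a_{0,0}=1$'' can only create a contribution at $(h,l)=(0,0)$; it cannot be the origin of $2\sum_{j=0}^{r-1-h}\binom{k-1}{j}$, which in the corollary sits inside the braces for \emph{every} pair $(h,l)$ with an $h$-dependent upper limit. (iii) You never state the relation between $a_{h,l}(1/2)$ and $a_{h,l}$: comparing \eqref{eq:gen_sin} at $t=\frac12$ with the expansion in the proof of Theorem \ref{thm:wt SF3} (using $(1+\frac{A}{2})/(1-\frac{A}{2})=(2+A)/(2-A)$ and $W\mapsto-W$) gives $a_{h,l}(1/2)=a_{h,l}$, whereas the explicit double sums as printed give $a_{h,l}(1/2)=2^{l}a_{h,l}$, and the coefficient comparison depends on which is used. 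Carrying out your own steps with $a_{h,l}(1/2)=a_{h,l}$ yields
\begin{align*}
\sum_{\bk\in I_r(k,r)}2^{k_r}\zeta^{\frac{1}{2}}_{\cRS}(\bk)
&=\sum_{0\le h\le l\le k-2}\biggl\{(-1)^{h}\biggl(2^{k-l-1}-\sum_{j=0}^{r-1}\binom{k-l-1}{j}-\delta_{h,0}\biggr)\\
&\qquad\qquad+(-1)^{k+r-1-h}\biggl(2^{k-l-1}+\sum_{j=0}^{r-1-h}\binom{k-l-1}{j}-1\biggr)\biggr\}
\frac{a_{h,l}\zeta(k-l)}{2^{r-h-2}}\\
&\quad+\frac{1}{2^{r-3}}\sum_{j=0}^{r-1}\binom{k-1}{j}\zeta(k),
\end{align*}
which is not literally the displayed statement: already for $r=1$, $k=2$ the left-hand side is $2^{2}\zeta^{\frac12}_{\cRS}(2)=8\zeta(2)$, while the displayed right-hand side evaluates (with $a_{0,0}=1$) to $2\zeta(2)$. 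So a complete proof must perform the recombination explicitly and confront this mismatch of constants and of the placement of the $\binom{k-1}{j}$ term, rather than assert that the two closed forms ``recognizably'' add up to the stated expression.
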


\section*{Acknowledgments}
The authors would like to thank Professor Yasuo Ohno for his valuable comments on the paper \cite{OZ}. The authors also would like to thank Norihiko Namura and Taiki Watanabe for their helpful comments.

\end{document}